\newtoks\prt 
 \newtheorem{thma}{Theorem}
\newtheorem{thm}{Theorem}[section]
\newtheorem{ques}[thm]{Question} 
\newtheorem{lemma}[thm]{Lemma} 
\newtheorem{prop}[thm]{Proposition} 
\newtheorem*{prop-without-numbering}{Proposition} 
\newtheorem{cor}[thm]{Corollary} 
\newtheorem{fact}[thm]{Fact} 
\newtheorem{obs}[thm]{Observation}
\newtheorem{example}[thm]{Example}
\theoremstyle{definition} 
\newtheorem{example2}[thm]{Example} 
\newtheorem{remark}[thm]{Remark}
\def\eqn#1$$#2$${\begin{equation}\label#1#2\end{equation}} 
 \def\J#1#2#3{ \left\{ #1,#2,#3 \right\} }
\def\ce{\mathbb C}
\def\ep{\varepsilon} 
\def\en{\mathbb N} 
\def\er{\mathbb R} 
\def\O{\mathbb O}
\def\dist{\operatorname{dist}}
\def \f{\boldsymbol{f}} 
\def \g{\boldsymbol{g}}
\def \h{\boldsymbol{h}} 
\def \uu{\boldsymbol{u}} 
\def \vv{\boldsymbol{v}}
\def \card {\operatorname{card}}
\def \reg {\partial _{\kern1pt\text{reg}}}
\def\ip#1#2{\left\langle#1,#2\right\rangle}
\def\di{\,\mbox{\rm d}}
\newcommand{\Norm}[1]{\Bigl\|#1\Bigr\|}
\newcommand{\norm}[1]{\left\|#1\right\|}
\newcommand{\betr}[1]{| #1  |}
\newcommand{\tr}[1]{\operatorname{tr}\bigl(#1\bigr)}
\newcommand{\abs}[1]{\left|#1\right|}
\newcommand{\setsep}{;\,}
\title{On optimality of constants in the Little Grothendieck Theorem}
\author[O.F.K. Kalenda]{Ond\v{r}ej F.K. Kalenda}
\author[A.M. Peralta]{Antonio M. Peralta}
\author[H. Pfitzner]{Hermann Pfitzner}
\address{Charles University, Faculty of Mathematics and Physics, Department of
Mathematical Analysis, Sokolovsk{\'a} 86, 186 75 Praha 8, Czech Republic}
\email{kalenda@karlin.mff.cuni.cz}
\address{Unidad de Excelencia María de Maeztu IMAG. Departamento de An{\'a}lisis Matem{\'a}tico, Facultad de
Ciencias, Universidad de Gra\-na\-da, 18071 Granada, Spain.}
\email{aperalta@ugr.es}
\address{Universit\'{e} d'Orl\'{e}ans,
Département de mathématiques, IDP,
BP 6759,
F-45067 Orl\'{e}ans Cedex 2,
France}
\email{pfitzner@univ-orleans.fr}
\keywords{Non-commutative little Grothendieck inequality, JB$^*$-algebra, JB$^*$-triple, best constants}
\subjclass[2010]{46L70, 47A30, 17C65}
\begin{document}

\begin{abstract} We explore the optimality of the constants making valid the recently established Little Grothendieck inequality for JB$^*$-triples and JB$^*$-algebras. In our main result we prove that for each bounded linear operator $T$ from a JB$^*$-algebra $B$ into a complex Hilbert space  $H$ and $\varepsilon>0$, there is a norm-one functional $\varphi\in B^*$ such that
$$\norm{Tx}\le(\sqrt{2}+\varepsilon)\norm{T}\norm{x}_\varphi\quad\mbox{ for }x\in B.$$ The constant appearing in this theorem improves the best  
value known up to date (even for C$^*$-algebras).
We also present an easy example witnessing that the constant cannot be strictly smaller than $\sqrt2$, hence our main theorem  
 is `asymptotically optimal'.
For type I JBW$^*$-algebras  we establish a canonical decomposition of normal functionals which may be used to prove the main result in this special case and also seems to be of an independent interest. As a tool we prove a measurable version of the Schmidt representation of compact operators on a Hilbert space. 
\end{abstract}

\maketitle

\section{Introduction}

We investigate the optimal values of the constant in the Little Grothendieck theorem for JB$^*$-algebra. The story begins in 1956 
when Grothendieck \cite{grothendieck1956resume} proved his famous theorem on factorization of bilinear forms on spaces of continuous functions through Hilbert spaces. A weaker form of this result, called Little Grothendieck Theorem, can be formulated as a canonical factorization of bounded linear operators from spaces of continuous functions into a Hilbert space. It was also proved by Grothendieck \cite{grothendieck1956resume} (see also \cite[Theorem 5.2]{pisier2012grothendieck}) and reads as follows.

\begin{thma}\label{T-C(K)}
There is a universal constant $k$ such that 
for any bounded linear operator  $T:C(K)\to H$, where $K$ is a compact space and $H$ is a Hilbert space, there is a Radon probability measure $\mu$ on $K$ such that
$$\norm{Tf}\le k\norm{T} \left(\int \abs{f}^2\di\mu\right)^{\frac12}\quad\mbox{ for }f\in C(K).$$
Moreover, the optimal value of $k$ is $\frac2{\sqrt\pi}$ in the complex case and $\sqrt{\frac\pi2}$ in the real case.
\end{thma}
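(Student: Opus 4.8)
The plan is to deduce the measure $\mu$ from a summing-type inequality, reduce that inequality to a finite-dimensional matrix statement, prove the latter with the \emph{sharp} constant by Gaussian rounding, and finally recall the matching lower bound. For the first step it suffices to prove that, for all finite families $f_1,\dots,f_n\in C(K)$,
\[
\sum_{i=1}^{n}\norm{Tf_i}^2\ \le\ k^2\norm T^2\,\Bigl\|\sum_{i=1}^{n}\abs{f_i}^2\Bigr\|_\infty
\]
(equivalently, that the $2$-summing norm of $T$ is at most $k\norm T$). Granting this, $\mu$ is produced by the Pietsch separation argument: in the real Banach lattice $C_\mathbb{R}(K)$ the convex cone generated by the functions $\norm{Tf}^2\,\mathbf 1-k^2\norm T^2\abs f^2$, $f\in C(K)$, does not meet the open convex cone of strictly positive functions, so by Hahn--Banach there is a nonzero positive functional separating them; since $K$ is compact this functional is, after normalization, a Radon probability measure $\mu$, and reading off the separation gives $\norm{Tf}^2\le k^2\norm T^2\int\abs f^2\,\di\mu$ for each individual $f$. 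This step loses no constant, so $\mu$ inherits exactly the constant of the summing inequality.

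By a routine localization one reduces the displayed inequality to the case $C(K)=\ell_\infty^N$ with $H$ finite-dimensional (pass to the separable $C^*$-subalgebra generated by the $f_i$, hence to a metrizable and then finite model of $C(K)$, and project $H$ onto the span of the $Tf_i$). Writing $x_j$ for the columns $u(e_j)$ of the operator $u$ in question, and noting that an admissible family $(y_l)$ gives rise to the positive semidefinite matrix $p_{jk}:=\sum_l y_l(j)\overline{y_l(k)}$ with $p_{jj}\le1$ and $\sum_l\norm{uy_l}^2=\sum_{j,k}\langle x_j,x_k\rangle p_{jk}$, the inequality becomes the following statement about vectors in a Hilbert space: for all $x_1,\dots,x_N$ and every positive semidefinite $(p_{jk})_{j,k=1}^N$ with $p_{jj}\le1$,
\[
\sum_{j,k=1}^{N}\langle x_j,x_k\rangle\,p_{jk}\ \le\ k^2\,\max_{\varepsilon_1,\dots,\varepsilon_N=\pm1}\ \Bigl\|\sum_{j=1}^{N}\varepsilon_j x_j\Bigr\|^2 ,
\]
with $k^2=\pi/2$ in the real case ($\varepsilon$ ranging over the torus $\mathbb{T}^N$, and $k^2=4/\pi$, in the complex case); here the right-hand side is exactly $k^2\norm u^2$, since $\norm u^2$ is attained at an extreme point of the unit ball of $\ell_\infty^N$.

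To prove this matrix inequality (the ``positive little Grothendieck inequality''): enlarging the diagonal only increases the left-hand side, so we may assume $(p_{jk})$ has unit diagonal and write $p_{jk}=\langle v_j,v_k\rangle$ with unit vectors $v_j$; let $g$ be a standard Gaussian vector in $\operatorname{span}\{v_j\}$ and round $\varepsilon_j:=\operatorname{sign}\langle g,v_j\rangle$. Grothendieck's identity gives $\mathbb{E}[\varepsilon_j\varepsilon_k]=\tfrac2\pi\arcsin p_{jk}$. Since $\arcsin$ has nonnegative Taylor coefficients and Schur (Hadamard) powers of a positive semidefinite matrix remain positive semidefinite, $(\arcsin p_{jk})-(p_{jk})\succeq0$; as $(\langle x_j,x_k\rangle)\succeq0$ as well, it follows that $\sum_{j,k}\langle x_j,x_k\rangle\arcsin p_{jk}\ge\sum_{j,k}\langle x_j,x_k\rangle p_{jk}$. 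Consequently
\[
\max_{\varepsilon}\Bigl\|\sum_j\varepsilon_j x_j\Bigr\|^2\ \ge\ \mathbb{E}\Bigl\|\sum_j\varepsilon_j x_j\Bigr\|^2\ =\ \tfrac2\pi\sum_{j,k}\langle x_j,x_k\rangle\arcsin p_{jk}\ \ge\ \tfrac2\pi\sum_{j,k}\langle x_j,x_k\rangle p_{jk},
\]
which is the desired inequality with $k^2=\pi/2$; the complex case is the same, using the complex Grothendieck identity, whose expansion in $p_{jk}$ has leading term $\tfrac\pi4 p_{jk}$ and a further positive semidefinite correction, giving $k^2=4/\pi$. (Consistently, $1/\mathbb{E}\abs{g_1}$ equals $\sqrt{\pi/2}$ for a real, and $2/\sqrt\pi$ for a complex, standard Gaussian.)

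The delicate point---and the reason the Gaussian rounding is needed---is the sharp constant. The soft argument, using that a Hilbert space has Gaussian type $2$ with constant $1$ applied to $\sum_i g_if_i\in C(K)$, only gives $\sum_i\norm{Tf_i}^2=\mathbb{E}\norm{T(\sum_i g_if_i)}^2\le\norm T^2\,\mathbb{E}\bigl\|\sum_i g_if_i\bigr\|_\infty^2$, a bound of order $\log N$ in $N$ coordinates (and unbounded in infinite dimensions), hence no universal constant at all; the exact Grothendieck identity together with the Loewner positivity $(\arcsin p_{jk})\succeq(p_{jk})$ is what pins down $\sqrt{\pi/2}$ and $2/\sqrt\pi$. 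Finally, optimality is witnessed by the classical example $K=S^{n-1}$, the unit sphere of $\ell_2^n$, with $Tf=c_n\int_{S^{n-1}}f(x)\,x\,\di\sigma(x)$ and $c_n$ chosen so that $\norm T=1$: a short Gaussian/spherical computation shows that any Radon probability measure on $S^{n-1}$ forces a constant tending to $\sqrt{\pi/2}$ (resp.\ $2/\sqrt\pi$) as $n\to\infty$, so no smaller value can work.
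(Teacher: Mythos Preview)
The paper does not give its own proof of this theorem: Theorem~A is stated as historical background and attributed to Grothendieck, with a reference to \cite{grothendieck1956resume} and to \cite[Theorem~5.2]{pisier2012grothendieck} for a modern exposition. So there is no ``paper's proof'' to compare against; what you have written is a compressed but essentially correct outline of the classical argument (Pietsch factorization reduces the existence of $\mu$ to a $2$-summing estimate; a finite-dimensional reduction turns this into a positive-semidefinite matrix inequality; Grothendieck's sign-rounding identity together with the Schur-product positivity of the Taylor remainder of $\arcsin$ yields the sharp constants $\sqrt{\pi/2}$ and $2/\sqrt{\pi}$; the spherical example shows optimality). This is precisely the route taken in the Pisier survey the paper cites.

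Two small points worth tightening. First, your complex case is only gestured at: the complex rounding uses $\varepsilon_j=\langle g,v_j\rangle/\lvert\langle g,v_j\rangle\rvert$ for a complex Gaussian $g$, and one needs the explicit identity $\mathbb{E}[\varepsilon_j\overline{\varepsilon_k}]=\tfrac{\pi}{4}\,p_{jk}\cdot{}_2F_1(\tfrac12,\tfrac12;2;\lvert p_{jk}\rvert^2)$ (or an equivalent series with nonnegative coefficients) to run the same Schur-positivity argument; merely asserting ``a further positive semidefinite correction'' hides the actual computation. Second, the optimality paragraph is a bare sketch: for the spherical operator $Tf=c_n\int_{S^{n-1}}f(x)\,x\,\di\sigma(x)$ one should indicate why \emph{every} probability measure $\mu$ on $S^{n-1}$ forces the constant to approach the claimed value (the usual route is to test on coordinate functions and use rotational symmetry or a Gaussian limit), since this is where the sharp constant is pinned from below. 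Neither point is a gap in the strategy, but both deserve a line of justification rather than an appeal to ``classical''.
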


The Grothendieck theorem was later extended to the case of C$^*$-algebras by Pisier \cite{pisier1978grothendieck} and Haagerup \cite{haagerup1985grothendieck}. Its `little version' reads as follows. Henceforth, all Hilbert spaces considered in this note will be over the complex field. 

\begin{thma}\label{T-C*alg}
Let $A$ be a C$^*$-algebra, $H$ a Hilbert space and $T:A\to H$ a bounded linear operator. Then there are two states $\varphi_1,\varphi_2\in A^*$ such that
$$\norm{Tx}\le\norm{T}\left(\varphi_1(x^*x)+\varphi_2(xx^*)\right)^{\frac12}\quad \mbox{ for }x\in A.$$
Moreover, the constant $1$ on the right-hand side is optimal.
\end{thma}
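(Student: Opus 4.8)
The plan is to reduce, by a Pietsch-type linearisation, to a single norm inequality for $T$ involving the "row" and "column" quantities $\sum x_k^*x_k$ and $\sum x_kx_k^*$, to prove that inequality using the polar decomposition of functionals together with Haagerup's deep estimate, and then to verify optimality of the constant on an explicit example in $B(\ell_2)$. First, normalise $\|T\|=1$ and pass to the bidual: replacing $A$ by the unital von Neumann algebra $A^{**}$ and $T$ by $T^{**}\colon A^{**}\to H^{**}=H$ (same norm, weak$^*$-to-weak continuous, restricting to $T$ on $A$), and observing that a state on $A^{**}$ restricts to a state on $A$ taking the same values on $x^*x$ and $xx^*$ for $x\in A$, we may assume $A=M$ is a unital von Neumann algebra. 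Thus the task is to produce states $\varphi_1,\varphi_2$ on $M$ with $\norm{Tx}^2\le\varphi_1(x^*x)+\varphi_2(xx^*)$ for all $x\in M$.

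\emph{Linearisation.} I claim this is equivalent to the local estimate
\[
\sum_{k=1}^n\norm{Tx_k}^2\le\Norm{\sum_{k=1}^n x_k^*x_k}+\Norm{\sum_{k=1}^n x_kx_k^*}
\]
for every finite family $x_1,\dots,x_n\in M$. The implication ``states $\Rightarrow$ local estimate'' is immediate. For the converse I would apply Ky Fan's lemma (a Hahn--Banach/minimax argument) on the weak$^*$-compact convex set $S(M)\times S(M)$ to the family of weak$^*$-continuous affine functions $(\psi_1,\psi_2)\mapsto\norm{Tx}^2-\psi_1(x^*x)-\psi_2(xx^*)$, $x\in M$. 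A finite convex combination of these functions, after absorbing the weights into the $x_k$ by homogeneity, attains a value $\le 0$ on $S(M)\times S(M)$ precisely because $\sup_{\psi\in S(M)}\psi\bigl(\sum x_k^*x_k\bigr)=\Norm{\sum x_k^*x_k}$ (each $\sum x_k^*x_k$ being positive) and similarly for the other summand; hence the local estimate is exactly the hypothesis that lets Ky Fan deliver the common point $(\varphi_1,\varphi_2)$.

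\emph{The local estimate --- the crux.} For each $k$ pick a unit vector $\xi_k\in H$ with $\langle Tx_k,\xi_k\rangle=\norm{Tx_k}$, and set $f_k:=\langle T(\cdot),\xi_k\rangle\in M^*$, so that $\|f_k\|\le 1$ and $\norm{Tx_k}=f_k(x_k)$. It is standard (polar decomposition plus the Cauchy--Schwarz inequality for positive functionals) that there are positive functionals $\rho_k,\sigma_k$ of norm $\le 1$ with
\[
\abs{f_k(x)}^2\le\rho_k(x^*x)\quad\text{and}\quad\abs{f_k(x)}^2\le\sigma_k(xx^*)\qquad(x\in M),
\]
so that $\norm{Tx_k}^2=\abs{f_k(x_k)}^2$ is dominated both by $\rho_k(x_k^*x_k)$ and by $\sigma_k(x_kx_k^*)$. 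The real difficulty --- the step I expect to be the main obstacle --- is that $\rho_k,\sigma_k$ depend on $k$, so these bounds cannot simply be summed (the crude inequalities $\rho_k(x_k^*x_k)\le\|x_k\|^2$ lose far too much). Decoupling this is exactly the content of Haagerup's C$^*$-algebra inequality \cite{haagerup1985grothendieck}, whose bilinear counterpart is the deeper result going back to Grothendieck \cite{grothendieck1956resume} and Pisier \cite{pisier1978grothendieck}; I would either invoke it directly, or reprove what is needed by fixing states attaining $\Norm{\sum x_k^*x_k}$ and $\Norm{\sum x_kx_k^*}$, passing to the corresponding GNS representations and running Haagerup's interpolation/averaging argument there. (Embedding $H$ isometrically into a C$^*$-algebra and applying the bilinear Grothendieck--Haagerup inequality also yields a local estimate, but only with a factor $\sqrt 2$, so it does not reach the optimal constant $1$.)

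\emph{Optimality.} Take $A=B(\ell_2)$, $H=\ell_2$ and $Tx:=xe_1$, so $\|T\|=1$. If $\norm{Tx}\le c\,\bigl(\varphi_1(x^*x)+\varphi_2(xx^*)\bigr)^{1/2}$ held for some states $\varphi_1,\varphi_2$, then evaluating at the partial isometry $x=e_je_1^*$ (for which $x^*x=e_{11}$, $xx^*=e_{jj}$ and $\norm{Tx}=1$) gives $\varphi_1(e_{11})+\varphi_2(e_{jj})\ge c^{-2}$ for every $j$; summing over $j=1,\dots,N$ and using $\varphi_1(e_{11})\le 1$ together with $\sum_{j=1}^N\varphi_2(e_{jj})\le 1$ yields $N/c^2\le N+1$, hence $c\ge 1$ on letting $N\to\infty$. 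Combined with the preceding steps, this shows the constant $1$ is optimal.
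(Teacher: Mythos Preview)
Your linearisation step is correct and matches Proposition~\ref{p equivalent formulation C*} in the paper. The genuine gap is in your ``crux'' paragraph: you correctly identify that the local estimate
\[
\sum_k\norm{Tx_k}^2\le\Norm{\sum_k x_k^*x_k}+\Norm{\sum_k x_kx_k^*}
\]
is the entire content of the positive part, but you do not prove it. Saying you would ``invoke it directly'' from \cite{haagerup1985grothendieck} is circular, since Haagerup's theorem \emph{is} the positive part of Theorem~\ref{T-C*alg}; and the alternative you sketch (embed $H$ in a C$^*$-algebra and use the bilinear inequality) you yourself note gives only constant $\sqrt2$. The polar-decomposition bounds $\abs{f_k(x)}^2\le\rho_k(x^*x)$ and $\abs{f_k(x)}^2\le\sigma_k(xx^*)$ are true but, as you observe, cannot be summed without a decoupling argument, and you have not supplied one.

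The paper's route to the positive part is quite different and does not pass through the local estimate at all. It first proves the main JB$^*$-algebra result (Theorem~\ref{t constant >sqrt2 in LG for JBstar algebras}), obtaining for each $n$ a norm-one functional $\psi_n\in A^*$ with $\norm{Tx}\le(\sqrt2+\tfrac1n)\norm{T}\,\norm{x}_{\psi_n}$. Writing $u_n=s(\psi_n)\in A^{**}$ one has $\norm{x}_{\psi_n}^2=\tfrac12\bigl((u_n\psi_n)(xx^*)+(\psi_n u_n)(x^*x)\bigr)$, where $u_n\psi_n=\abs{\psi_n}$ and $\psi_n u_n=\abs{\psi_n^*}$ are states; a weak$^*$ cluster point of the resulting sequence of pairs of states gives the conclusion with constant $1$. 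So the constant $\sqrt2$ in the triple version is exactly compensated by the factor $\tfrac12$ coming from the expansion of the triple seminorm into the two C$^*$-terms.

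Your optimality argument, on the other hand, is correct and is more elementary than the Haagerup--Itoh construction the paper cites. Your computation with $x=e_{j1}\in B(\ell_2)$ and $Tx=xe_1$ gives $N\le c^2\bigl(N\varphi_1(e_{11})+\sum_{j\le N}\varphi_2(e_{jj})\bigr)\le c^2(N+1)$, hence $c\ge1$. The paper instead invokes \cite{haagerup-itoh} (see Example~\ref{example}), which is needed for sharper statements but is overkill here; your argument is in the spirit of the paper's Example~\ref{ex:Tx=xxi}, adapted to the asymmetric C$^*$-inequality rather than the symmetric Jordan one.
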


The positive part of the previous theorem is due to Haagerup \cite{haagerup1985grothendieck}, the optimality result was proved by Haagerup and Itoh in \cite{haagerup-itoh} (see also \cite[Section 11]{pisier2012grothendieck}). Let us recall that a \emph{state} on a C$^*$-algebra is a positive functional of norm one, hence in the case of a complex $C(K)$ space (which is a commutative C$^*$-algebra), a state is just a functional represented by a probability measure. Hence, as a consequence of Theorem~\ref{T-C*alg}
we get a weaker version of the complex version of Theorem~\ref{T-C(K)} with
$k\le\sqrt{2}$.

Let us point out that Theorem~\ref{T-C*alg} is specific for (noncommutative) C$^*$-algebras due to the asymmetric role played there by the products $xx^*$ and $x^*x$.
To formulate its symmetric version recall that the Jordan product on a C$^*$-algebra $A$ is defined by
$$x\circ y=\frac12(xy+yx)\quad\mbox{ for }x,y\in A.$$
Using this notation we may formulate the following consequence of Theorem~\ref{T-C*alg}.

\begin{thma}\label{T:C*alg-sym}
Let $A$ be a C$^*$-algebra, $H$ a Hilbert space and $T:A\to H$ a bounded linear operator. Then there is a state $\varphi\in A^*$ such that
$$\norm{Tx}\le2\norm{T}\varphi(x\circ x^*)^{\frac12}\quad \mbox{ for }x\in A.$$
\end{thma}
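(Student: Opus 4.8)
The plan is to deduce Theorem~\ref{T:C*alg-sym} directly from Theorem~\ref{T-C*alg} by symmetrizing the pair of states it provides, at the cost of an extra factor~$2$. First I would apply Theorem~\ref{T-C*alg} to the given operator $T\colon A\to H$ to obtain states $\varphi_1,\varphi_2\in A^*$ satisfying
$$\norm{Tx}^2\le\norm{T}^2\bigl(\varphi_1(x^*x)+\varphi_2(xx^*)\bigr)\qquad\mbox{for }x\in A.$$

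Next I would set $\varphi=\frac12(\varphi_1+\varphi_2)$. Since the set of states of $A$ is convex, $\varphi$ is again a state. The only point to use is positivity: for each $x\in A$ the elements $x^*x$ and $xx^*$ are positive, hence $\varphi_1(xx^*)\ge0$ and $\varphi_2(x^*x)\ge0$, so that
$$\varphi_1(x^*x)+\varphi_2(xx^*)\le \varphi_1(x^*x)+\varphi_1(xx^*)+\varphi_2(x^*x)+\varphi_2(xx^*).$$
The right-hand side equals $2(\varphi_1+\varphi_2)(x\circ x^*)=4\,\varphi(x\circ x^*)$, because $x\circ x^*=\frac12(xx^*+x^*x)$.

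Combining the two displays yields $\norm{Tx}^2\le 4\norm{T}^2\,\varphi(x\circ x^*)$, that is, $\norm{Tx}\le 2\norm{T}\,\varphi(x\circ x^*)^{1/2}$ for all $x\in A$, which is the assertion. I do not expect any genuine obstacle in this deduction: the entire content is the passage from the asymmetric estimate to the symmetric one, and all the analytic difficulty is already absorbed in Theorem~\ref{T-C*alg}, which we are free to invoke. The only mild care needed is the verification that a convex combination of states is a state, which is routine.
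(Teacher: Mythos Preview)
Your argument is correct and is exactly the deduction the paper gives immediately after stating the theorem: take $\varphi=\tfrac12(\varphi_1+\varphi_2)$ and use positivity of $x^*x$ and $xx^*$. There is nothing to add.
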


To deduce Theorem~\ref{T:C*alg-sym} from Theorem~\ref{T-C*alg} it is enough to take $\varphi=\frac12(\varphi_1+\varphi_2)$ and to use positivity of the elements $xx^*$ and $x^*x$. However, in this case the question on optimality of the constant remains open.

\begin{ques}
Is the constant $2$ in Theorem~\ref{T:C*alg-sym} optimal?
\end{ques}

It is easy to show that the constant should be at least $\sqrt2$ (see Example~\ref{ex:alg vs triple} below) and, to the best of our knowledge, no counterexample is known showing that $\sqrt{2}$ is not enough.

A further generalization of the Grothendieck theorem, to the setting of JB$^*$-triples (see Section~\ref{sec: notation} for basic definitions and properties), was suggested by Barton and Friedman \cite{barton1987grothendieck}. However, their proof contained a gap found later by Peralta and Rodr\'{\i}guez Palacios \cite{peralta2001little,peralta2001grothendieck} who proved a weaker variant of the theorem. A correct proof was recently provided by the authors in \cite{HKPP-BF}. The `little versions' of these results are summarized in the following theorem. 

\begin{thma}\label{T:triples}
Let $E$ be a JB$^*$-triple, $H$ a Hilbert space and $T:E\to H$ a bounded linear operator.
\begin{enumerate}[$(1)$]
    \item If $T^{**}$ attains its norm, there is a norm-one functional $\varphi\in E^*$ such that
    $$\norm{Tx}\le\sqrt{2}\norm{T}\norm{x}_\varphi\quad\mbox{ for }x\in E.$$
    \item Given $\varepsilon>0$, there are norm-one functionals $\varphi_1,\varphi_2\in E^*$ such that
     $$\norm{Tx}\le(\sqrt{2}+\varepsilon)\norm{T}\left(\norm{x}^2_{\varphi_1}+\varepsilon\norm{x}^2_{\varphi_2}\right)^{\frac12}\quad\mbox{ for }x\in E.$$
     \item Given $\varepsilon>0$, there is a norm-one functional $\varphi\in E^*$ such that
    $$\norm{Tx}\le(2+\varepsilon)\norm{T}\norm{x}_\varphi\quad\mbox{ for }x\in E.$$
\end{enumerate}
\end{thma}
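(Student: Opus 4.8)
The plan is to derive everything from part (1) and to prove (1) by a minimax argument localised at a tripotent. Normalise $\norm T=1$ and pass to the bidual: $E^{**}$ is a JBW$^*$-triple, $T^{**}\colon E^{**}\to H$ has the same norm, and recall that for a norm-one $\varphi\in E^*=(E^{**})_*$ one has $\norm x_\varphi^2=\varphi\{x,x,z\}$ for \emph{every} norm-one $z\in E^{**}$ with $\varphi(z)=1$, independently of the choice of $z$; hence a norm-one functional produced in $E^*$ restricts to $E$ with exactly the seminorm of the statement. Granting (1), part (2) is obtained by running the argument of (1) at a \emph{near}-optimal point of $T^{**}$ (equivalently, by perturbing $T$ within the class of operators whose bidual attains its norm) and absorbing the resulting defect into a second functional $\varphi_2$; part (3) follows from (2) by replacing $\varphi_1,\varphi_2$ by a single norm-one $\varphi$ proportional to $\varphi_1+\varepsilon\varphi_2$ and invoking a domination of these triple seminorms under convex combinations, after which $\varepsilon$ is readjusted. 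So the core is (1).

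\emph{Reduction to a tripotent.} Assume $T^{**}$ attains its norm at a norm-one $z\in E^{**}$, and let $u$ be the range tripotent of $z$, so that $z$ is a positive element of norm one in the JBW$^*$-algebra $E^{**}_2(u)$ with unit $u$. Pick $v\in H$, $\norm v=1$, with $\langle T^{**}z,v\rangle=1$; then $\varphi:=T^*v\in E^*$ has $\norm\varphi=1$ and $\varphi(z)=1$, so $\varphi$ attains its norm at the positive element $z$ and therefore restricts to a positive functional of norm one on the unital algebra $E^{**}_2(u)$, whence $\varphi(u)=1$. Consequently $\langle T^{**}u,v\rangle=1$, so by the equality case of Cauchy--Schwarz $T^{**}u=v$ and $\norm{T^{**}u}=\norm{T^{**}}$. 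Thus we may assume the norm-attaining point is the tripotent $u$, and $\norm x_\varphi^2=\varphi\{x,x,u\}$ for all $x$.

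\emph{The local inequality.} It remains to prove $\norm{T^{**}x}^2\le 2\,\varphi\{x,x,u\}$ for all $x$, and this I would deduce from the finite-family inequality: for all $x_1,\dots,x_n\in E^{**}$,
\[
\sum_{j=1}^n\norm{T^{**}x_j}^2\ \le\ 2\,\Bigl\|\sum_{j=1}^n P_2(u)\{x_j,x_j,u\}\Bigr\|,
\]
the norm on the right being that of $E^{**}_2(u)$. This would be proved by comparing $T^{**}$ with the triple norm on a single element of $E^{**}$ built from $u$ and the $x_j$ with scalar coefficients adapted to the vectors $T^{**}x_j\in H$ (in the spirit of Haagerup's proof of Theorem~\ref{T-C*alg}), and by analysing the Peirce decomposition with respect to $u$: the Peirce-two part lies in the JB$^*$-algebra $E^{**}_2(u)$, where the asymmetry between $xx^*$ and $x^*x$ (compare Theorems~\ref{T-C*alg} and~\ref{T:C*alg-sym}) is responsible for a factor $2$; the Peirce-one part is handled by an intrinsic JB$^*$-triple estimate bounding $\norm{u+\lambda x_1}^2-1$ by a multiple of $\lambda^2\,\varphi\{x_1,x_1,u\}$; and the Peirce-zero part does not contribute, $E^{**}_0(u)$ being orthogonal to $u$. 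Since $\sum_j P_2(u)\{x_j,x_j,u\}$ is positive, its norm equals a supremum of $\sum_j\psi\{x_j,x_j,u\}$ over (normal) states $\psi$ of $E^{**}_2(u)$; a Ky~Fan--type minimax over the state space then produces a single such $\psi$ with $\norm{T^{**}x}^2\le 2\,\psi\{x,x,u\}$ for all $x$, and $\varphi:=\psi\circ P_2(u)$ is a norm-one element of $E^*$ with $\varphi(u)=1$. Restricting to $x\in E$ finishes (1).

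The main obstacle is the finite-family inequality with the constant \emph{exactly} $2=(\sqrt2)^2$: the Peirce-one and Peirce-two contributions must be combined tightly, since a careless estimate yields a strictly larger constant, and it is here that $\sqrt2$ (instead of the C$^*$-algebra value $1$) makes its appearance. Secondary difficulties are the extraction of $\varphi$ inside the predual $E^*$ (the minimax must be set up accordingly, which is one source of the historical trouble with this theorem) and, for (2), choosing the near-optimal point — or the perturbation of $T$ — so that the error is genuinely captured by a single extra functional $\varphi_2$; and since the exceptional Cartan factors embed in no C$^*$-algebra, none of this can be delegated to Theorem~\ref{T:C*alg-sym}.
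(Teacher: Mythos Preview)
This theorem is not proved in the present paper; it is quoted as a summary of results from \cite{barton1987grothendieck}, \cite{peralta2001grothendieck} and \cite{HKPP-BF}, so the comparison is with those sources. Your outline for (1) --- reduction to a norm-attaining tripotent followed by a Hahn--Banach/minimax separation over states of $E^{**}_2(u)$ --- is indeed the Barton--Friedman strategy, and your sketch for (2) via a norm-attaining perturbation of $T$ is exactly the Poliquin--Zizler variational principle used in \cite{peralta2001grothendieck}. On those two points you have identified the right skeleton, even if the ``finite-family inequality'' is left as an acknowledged black box.

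The genuine gap is your passage from (2) to (3). You propose to take $\varphi$ proportional to $\varphi_1+\varepsilon\varphi_2$ and ``invoke a domination of these triple seminorms under convex combinations''. But no such domination is available for free: the seminorm $\norm{\cdot}_\psi$ is defined through the support tripotent $s(\psi)$, and $s(\varphi_1+\varepsilon\varphi_2)$ bears no usable relation to $s(\varphi_1)$ and $s(\varphi_2)$ in a general JB$^*$-triple. In particular there is no reason why $\norm{x}_{\varphi_1}^2+\varepsilon\norm{x}_{\varphi_2}^2$ should be controlled by $\norm{x}_{\varphi_1+\varepsilon\varphi_2}^2$. Producing a single $\psi$ with
\[
\norm{x}_{\varphi_1}^2+\norm{x}_{\varphi_2}^2\le 2\bigl(\norm{\varphi_1}+\norm{\varphi_2}\bigr)\norm{x}_\psi^2
\]
is precisely the content of \cite[Theorem 2.4]{HKPP-BF} (stated here as Proposition~\ref{P:2-1BF}), and that result is the technical core of an entire paper --- it is obtained by a case analysis over the structure theory of JBW$^*$-triples, not by a convexity trick. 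The factor $2$ appearing there is exactly why the constant in (3) is $2+\varepsilon$ rather than $\sqrt2+\varepsilon$; the paper itself says that ``this perturbation was recently avoided in \cite[Theorem 6.2]{HKPP-BF}, at the cost of squaring the constant''. So (3) is not a corollary of (2) in the way you suggest, and your plan would need to supply an independent proof of Proposition~\ref{P:2-1BF} to be complete.
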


The pre-hilbertian seminorms $\norm{\cdot}_\varphi$ used in the statement are defined in Subsection~\ref{subsec:seminorms} below.

Let us comment the history and the differences of the three versions.
It was claimed in \cite[Theorem 1.3]{barton1987grothendieck} that assertion $(1)$ holds without the additional assumption on attaining the norm, because the authors assumed this assumption is satisfied automatically. In \cite{peralta2001little} and \cite[Example 1 and Theorem 3]{peralta2001grothendieck} it was pointed out that this is not the case and assertion $(2)$ was proved using a variational principle from \cite{Poliquin-Zizler}. 
In \cite[Lemma 3]{peralta2001grothendieck} also assertion $(1)$ was formulated.

Note that in $(2)$ not only the constant $\sqrt2$ is replaced by a slightly larger one, but also the pre-hilbertian seminorm on the right-hand side is perturbed. This perturbation was recently avoided in \cite[Theorem 6.2]{HKPP-BF}, at the cost of squaring the constant.
Further, although the proof from \cite{barton1987grothendieck} was not correct, up to now there is no counterexample to the statement itself.
In particular, the following question remains open.

\begin{ques}
What is the optimal constant in assertion $(3)$ of Theorem~\ref{T:triples}? In particular, does assertion $(1)$ of the mentioned theorem hold without assuming the norm-attainment?
\end{ques}

The main result of this note is the following partial answer.

\begin{thm}\label{t constant >sqrt2 in LG for JBstar algebras}
Let $B$ be a JB$^*$-algebra, $H$ a Hilbert space and $T:B\to H$ a bounded linear operator. Given $\varepsilon>0$, there is a norm-one functional $\varphi\in B^*$ such that
    $$\norm{Tx}\le(\sqrt{2}+\varepsilon)\norm{T}\norm{x}_\varphi\quad\mbox{ for }x\in B.$$
In particular, this holds if $B$ is a C$^*$-algebra.    
\end{thm}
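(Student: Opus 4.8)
The plan is to reduce the statement, through a chain of soft steps, to a concrete estimate inside a JBW$^*$-algebra equipped with a well-understood normal functional, and to extract the improved constant there.

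First, I would pass to the bidual. Since $B^{**}$ is a JBW$^*$-algebra, $T^{**}\colon B^{**}\to H$ is weak$^*$-to-weak continuous with $\norm{T^{**}}=\norm{T}$, and a normal norm-one functional on $B^{**}$ restricts to a norm-one functional on $B$ inducing the same pre-hilbertian seminorm on elements of $B$, it suffices to prove: given a JBW$^*$-algebra $M$, a weak$^*$-to-weak continuous $S\colon M\to H$ and $\varepsilon>0$, there is a normal state $\varphi$ on $M$ with $\norm{Sx}\le(\sqrt2+\varepsilon)\norm{S}\norm{x}_\varphi$ for $x\in M$. (The $\varepsilon$ genuinely survives this reduction, since $x\mapsto\norm{Sx}$ is only weak$^*$-lower semicontinuous on the weak$^*$-compact unit ball of $M$, so $S$ need not attain its norm and Theorem~\ref{T:triples}(1) is not directly applicable.) Next, a routine approximation reduces us to $\dim H<\infty$: apply the statement to $P_\alpha S$ for an increasing net of finite-rank projections $P_\alpha\uparrow\mathrm{Id}_H$, and pass to a weak$^*$-cluster point of the resulting normal states in the weak$^*$-compact state space of $M$, using $\norm{P_\alpha Sx}\uparrow\norm{Sx}$ together with weak$^*$-continuity of $\psi\mapsto\psi(x\circ x^*)$. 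We may thus assume $S\colon M\to\ell_2^n$, so that $Sx=(f_1(x),\dots,f_n(x))$ with $f_1,\dots,f_n\in M_*$, and the task becomes: dominate $\sum_j\abs{f_j(x)}^2$ by $(\sqrt2+\varepsilon)^2\norm{S}^2\varphi(x\circ x^*)$ for a single normal state $\varphi$.

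For the core estimate I would invoke the type decomposition of $M$ and treat the summands separately. On the type I part one uses the canonical decomposition of normal functionals developed in this paper: reducing to a summand of the form $C(\Omega)\overline{\otimes}B(K)$ (the spin and the finite exceptional summands being handled by hand), each $f_j$ is represented by a measurable field $\omega\mapsto\rho_j(\omega)$ of trace-class operators, and the measurable Schmidt representation diagonalises these fields simultaneously in a measurable way. In this normal form the ``row'' and ``column'' support data of the $f_j$ coincide, so that a Haagerup-type argument --- which in general produces \emph{two} states, one controlling the $x^*x$-terms and one the $xx^*$-terms, hence the constant $2$ --- can here be run with a single state, turning $2$ into $\sqrt2$. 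A direct-integral argument over $\Omega$ then patches the fibrewise-optimal states into one global normal state on $M$, which is legitimate because $\psi\mapsto\psi(x\circ x^*)$ is affine on the normal state space; this patching, together with the passage between summands of different types, is where the extra $\varepsilon$ is spent. On the continuous (type II/III) part, which is special and hence governed by an underlying von Neumann algebra, the analogous single-state normal form and estimate are available by classical means.

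The main obstacle is the heart of the previous paragraph: proving the measurable Schmidt (singular-value) decomposition for a finite family of measurable trace-class fields, and then genuinely exploiting the diagonalisation so that a single state simultaneously dominates the two one-sided contributions of each $f_j$ --- this is exactly what produces the drop from $2$ to $\sqrt2$ and is new even for C$^*$-algebras. By contrast, the soft reductions above and the matching lower bound $\sqrt2$ (Example~\ref{ex:alg vs triple}) are routine.
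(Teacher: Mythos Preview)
Your proposal has a genuine gap at its core. You reduce to producing a normal \emph{state} $\varphi$ (not merely a norm-one functional) satisfying $\norm{Sx}\le(\sqrt2+\varepsilon)\norm{S}\,\varphi(x\circ x^*)^{1/2}$; but this is strictly stronger than what the paper proves and is precisely the open Question asked after Theorem~\ref{T:algebraic version non-dual}. With states the paper only obtains the constant $2+\varepsilon$ (Theorem~\ref{T:algebraic version dual}), and Example~\ref{ex:alg vs triple}(b) shows why the passage from a norm-one functional to a state costs a factor $\sqrt2$ via Lemma~\ref{L:rotation}. Your weak$^*$-cluster-point reduction to $\dim H<\infty$ relies on $\norm{x}_\psi^2=\psi(x\circ x^*)$, which holds only for states, so you are committed to the state version throughout.

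The mechanism you offer for saving the factor is not sound. You cannot Schmidt-diagonalise \emph{several} trace-class fields $\rho_1,\dots,\rho_n$ simultaneously in any useful sense; Theorem~\ref{T:measurable Schmidt} diagonalises one operator at a time. More importantly, even for a single $\rho_j=\sum_k\lambda_k\ip{\cdot}{\eta_k}\xi_k$ the left and right support data (the spans of the $\xi_k$ and of the $\eta_k$) need not coincide, so your claim that after diagonalisation ``the row and column support data of the $f_j$ coincide'' is unjustified. Arranging $\varphi_1=\varphi_2$ in Haagerup's two-state inequality is exactly what would give $\sqrt2$ with a state, and nothing in a Schmidt normal form of the $f_j$ forces this.

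The paper's route is quite different and avoids states entirely. It takes the known two-functional inequality of Theorem~\ref{T:triples-dual} as input and proves a merging result (Proposition~\ref{P:majorize 1+2+epsilon}): in a JBW$^*$-algebra, $\norm{\cdot}_{\varphi_1}^2+\norm{\cdot}_{\varphi_2}^2\le(\norm{\varphi_1}+2\norm{\varphi_2}+\varepsilon)\norm{\cdot}_\psi^2$ for a single norm-one $\psi$. The key is Proposition~\ref{P:key decomposition alternative}: any normal $\varphi$ can, up to an $\varepsilon$-loss, be replaced by $\tilde\varphi$ whose support tripotent lies below a \emph{unitary} $w$. This is obtained via the Russo--Dye theorem (the unit ball is the closed convex hull of unitaries, so some unitary $w$ nearly norms $\varphi$) together with the perturbation Lemmata~\ref{l norm-one functional close enough to states at a unitary}--\ref{l ad-hoc 2}. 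The measurable Schmidt decomposition in Section~\ref{sec:type I} is used for a different purpose --- to decompose a \emph{single} functional into a finite-tripotent piece plus a small remainder (Proposition~\ref{P:type I approx}) --- not to diagonalise the operator $T$.
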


Note that JB$^*$-algebras form a subclass of JB$^*$-triples and can be viewed as a generalization of C$^*$-algebras (see the next section). We further remark that the previous theorem is `asymptotically optimal' as the constant cannot be strictly smaller than $\sqrt2$ by Example~\ref{ex:Tx=xxi} below.

The paper is organized as follows. Section~\ref{sec: notation} contains basic background on JB$^*$-triples and JB$^*$-algebras. In Section~\ref{sec:majorizing} we formulate the basic strategy of the proof using majorization results for pre-hilbertian seminorms. 

In Section~\ref{sec:type I} we deal with a large subclass of JBW$^*$-algebras (finite ones and those of type I). The main result of this section is Proposition~\ref{P:type I approx} which provides a canonical decomposition of normal functionals on the just commented JBW$^*$-algebras. 
This statement may be used to prove the main result in this special case and, moreover, it seems to be of an independent interest.
As a tool we further establish a measurable version of Schmidt decomposition of compact operators (see Theorem~\ref{T:measurable Schmidt}).

In Section~\ref{sec:JW*} we address Jordan subalgebras of von Neumann algebras. Section~\ref{sec:proofs} contains the synthesis of the previous sections, the proof of the main result and some consequences. In particular, we show that Theorem~\ref{T-C*alg} (with the precise constant) follows easily from Theorem~\ref{t constant >sqrt2 in LG for JBstar algebras}.

Section~\ref{sec:problems} contains several examples witnessing optimality of some results and related open problems. In Section~\ref{sec:triples} we discuss the possibility of extending our results to general JB$^*$-triples.

\section{Basic facts on JB$^*$-triples and JB$^*$-algebras}\label{sec: notation}

It is known that in most cases, like in $B(H)$, the hermitian part of a C$^*$-algebra $A$ need not be a subalgebra of $A$ because it is not necessarily closed for the associative product. This instability can be avoided, at the cost of loosing associativity, by replacing the associative product $a b$ in $A$ with the \emph{Jordan product} defined by \begin{equation}\label{eq special Jordan product} a\circ b := \frac12 (a b + ba).
\end{equation}

This may be seen as an inspiration for the following abstract definitions.
A real or complex \emph{Jordan algebra} is a non-necessarily associative algebra $B$ over $\mathbb{R}$ or $\mathbb{C}$ whose multiplication (denoted by $\circ$) satisfies the identities:\begin{equation}\label{eq Jordan axioms} x\circ y = y \circ x \hbox{ (commutative law)  and }
    ( x\circ y )\circ  x^2 = x \circ ( y \circ x^2 ) \hbox{ (Jordan identity)}
\end{equation}  for all $x,y\in B$, where $x^2 = x\circ x$.\smallskip

Jordan algebras were the mathematical structures designed by the theoretical physicist P.~Jordan to formalize the notion of an algebra of observables in quantum mechanics in 1933. The term ``Jordan algebra'' was introduced by A.A.~Albert in the  1940s. Promoted by the pioneering works of I.~Kaplanski, E.M.~Alfsen, F.W.~Shultz, H.~Hanche-Olsen, E.~St\"{o}rmer, J.D.M.~Wright and M.A.~Youngson, JB$^*$- and JBW$^*$-algebras are Jordan extensions of C$^*$- and von Neumann algebras. A {\em JB$^*$-algebra} is a complex Jordan algebra $(B,\circ)$ equipped with a complete norm $\|\cdot\|$ and an involution $*$ satisfying the following axioms: \begin{enumerate}[$(a)$]
\item $\norm{x\circ y}\le\norm{x}\norm{y}$ for $x,y\in B$;
\item $\norm{U_{x} (x^*)}=\norm{x}^3$ for $x\in B$ \hfill ({\em a Gelfand-Naimark type axiom}),
\end{enumerate} where $U_{x} (y) = 2(x\circ y)\circ x-x^2\circ y$ ($x,y\in B$). These axioms guarantee that the involution of every JB$^*$-algebra is an isometry (see \cite[Lemma 4]{youngson1978vidav} or \cite[Proposition 3.3.13]{Cabrera-Rodriguez-vol1}).\smallskip

JB$^*$-algebras were also called \emph{Jordan C$^*$-algebras} by I. Kaplansky and other authors at the early stages of the theory.\smallskip

Every C$^*$-algebra is a JB$^*$-algebra with its original norm and involution and the Jordan product defined in \eqref{eq special Jordan product}. Actually, every norm closed self-adjoint Jordan subalgebra of a C$^*$-algebra is a JB$^*$-algebra. Those JB$^*$-algebras obtained as JB$^*$-subalgebras of C$^*$-algebras are called \emph{JC$^*$-algebras}. There exist JB$^*$-algebras which are \emph{exceptional} in
the sense that they cannot be identified with a JB$^*$-subalgebra of a C$^*$-algebra, this is the case of the JB$^*$-algebra $H_3(\O)$ of all  $3\times 3$-hermitian matrices with entries in the algebra $\O$ of complex octonions (see, for example, \cite[\S 7.2]{hanche1984jordan}, \cite[\S 6.1 and 7.1]{Cabrera-Rodriguez-vol2} or \cite[\S 6.2 and 6.3]{Finite}).\smallskip

A JBW$^*$-algebra (respectively, a JW$^*$-algebra) is a JB$^*$-algebra (respectively, a JC$^*$-algebra) which is also a dual Banach space.\smallskip

JB$^*$-algebras are intrinsically connected with another mathematical object deeply studied in the literature. A \emph{JB-algebra} is a real Jordan algebra $J$ equipped with a complete norm satisfying \begin{equation}\label{eq axioms of JB-algebras} \|a^{2}\|=\|a\|^{2}, \hbox{ and } \|a^{2}\|\leq \|a^{2}+b^{2}\|\ \hbox{ for all } a,b\in J.
\end{equation}

In a celebrated lecture in St.\ Andrews in 1976 I. Kaplansky suggested the definition of JB$^*$-algebra and pointed out that the self-adjoint part $B_{sa}=\{x\in B : x^* =x\}$ of a JB$^*$-algebra is always a JB-algebra. One year later, J.D.M. Wright contributed one of the most influential results in the theory of JB$^*$-algebras by proving that the complexification of every JB-algebra is a JB$^*$-algebra (see \cite{Wright1977}). A \emph{JC-algebra} (respectively, a \emph{JW-algebra}) is a norm-closed (respectively, a weak$^*$-closed) real Jordan subalgebra of the hermitian part of a C$^*$-algebra (respectively, of a von Neumann algebra).\smallskip

Suppose $B$ is a unital JB$^*$-algebra. The smallest norm-closed real Jordan subalgebra $C(a)$ of $B_{sa}$ containing a self-adjoint element $a$ in $B$ and $1$ is associative. According to the usual notation, the \emph{spectrum of $a$} in $B$, denoted by $Sp(a)$, is the the set of all real $\lambda$ such that $a - \lambda 1$ does not have an inverse in $C(a)$ (cf. \cite[3.2.3]{hanche1984jordan}). If $B$ is not unital we consider the unitization of $B$ to define the spectrum of a self-adjoint element. It is known that the JB$^*$-subalgebra of $B$ generated by a single self-adjoint element $a\in B$ and the unit is isometrically JB$^*$-isomorphic to the commutative C$^*$-algebra $C(Sp(a))$, of all complex-valued continuous functions on $Sp(a)$ (see \cite[3.2.4. The spectral theorem]{hanche1984jordan}). An element $a\in B$ is called positive if $a=a^*$ and $Sp(a)\subseteq \mathbb{R}_{0}^+$ (cf. \cite[3.3.3]{hanche1984jordan}).\smallskip

Although there exist exceptional JB$^*$-algebras which cannot be embedded as JB$^*$-subalgebras of C$^*$-algebras, the JB$^*$-subalgebra of a JB$^*$-algebra $B$ generated by two hermitian elements (and the unit element) is a JC$^*$-algebra (compare Macdonald's and Shirshov-Cohn's theorems \cite[Theorems 2.4.13 and 2.4.14]{hanche1984jordan}, \cite[Corollary 2.2]{Wright1977} or \cite[Proposition 3.4.6]{Cabrera-Rodriguez-vol1}). Consequently, for each $x\in B$, the element $x\circ x^*$ is positive in $B$.\smallskip

We refer to the references \cite{hanche1984jordan,Cabrera-Rodriguez-vol1} and \cite{Cabrera-Rodriguez-vol2} for the basic background, notions and results on JB$^*$-algebras.\smallskip

C$^*$- and JB$^*$-algebras have been extensively employed as a framework for studying bounded symmetric domains in complex Banach spaces of infinite dimension, as an alternative notion to simply connected domains. The open unit ball of every C$^*$-algebra is a bounded symmetric domain (see \cite{harris1974bounded}) and the open unit balls of (unital) JB$^*$-algebras are, up to a biholomorphic mapping, those bounded symmetric domains
which have a realization as a tube domain, i.e. an upper half-plane (cf. \cite{BraKaUp78}). These examples do not exhaust all possible bounded symmetric domains in arbitrary complex Banach spaces, a strictly wider class of Banach spaces is actually required. The most conclusive result was obtained by W. Kaup who proved in 1983 that every bounded symmetric domain in a complex Banach space is biholomorphically equivalent to the open unit ball of a JB$^*$-triple \cite{kaup1983riemann}.\smallskip

A complex Banach space $E$ belongs to the class of {\em JB$^*$-triples} if it admits a triple product (i.e., a continuous mapping)  $\J{\cdot}{\cdot}{\cdot}:E^3\to E$ which is symmetric and bilinear in the outer variables and conjugate linear in the middle variable and satisfies the next algebraic and geometric axioms:
\begin{enumerate}[(JB$^*$-1)]
    \item $\J xy{\J abc}=\J{\J xya}bc-\J a{\J yxb}c+\J ab{\J xyc}$ for any $x,y,a,b,c\in E$ \hfill ({\em Jordan identity});
    \item For any $a\in E$ the operator $L(a,a):x\mapsto \J aax$ is hermitian with non-negative spectrum;
    \item $\norm{\J xxx}=\norm{x}^3$ for all $x\in E$ \hfill ({\em a Gelfand-Naimark type axiom}).
\end{enumerate}

C$^*$-algebras and JB$^*$-algebras belong to the wide list of examples of JB$^*$-triples when they are equipped with the triple products given by \begin{equation}\label{eq triple product JCstar and JBstar} \{a,b,c\} =\frac12 ( a b^* c + c b^* a),\hbox{ and } \{a,b,c\} = (a\circ b^*) \circ c +(c\circ b^*) \circ a - (a\circ c) \circ b^*,
\end{equation} respectively (see \cite[Theorem 3.3]{BraKaUp78} or \cite[Theorem 4.1.45]{Cabrera-Rodriguez-vol1}). The first triple product in \eqref{eq triple product JCstar and JBstar} induces a structure of JB$^*$-triple on every closed subspace of the space $B(H,K),$ of all bounded linear operators between complex Hilbert spaces $H$ and $K$, which is closed under this triple product. In particular, $B(H,K)$ and every complex Hilbert space are JB$^*$-triples with their canonical norms and the first triple product given in \eqref{eq triple product JCstar and JBstar}.\smallskip

In a JB$^*$-triple $E$ the triple product is contractive, that is, 
$$\|\{x,y,z\}\|\le\|x\| \|y\| \|z\| \quad\mbox{ for all } x,y,z \mbox{ in } E$$ (cf. \cite[Corollary 3]{Friedman-Russo-GN} or \cite[Corollary 7.1.7]{Cabrera-Rodriguez-vol2}, \cite[P.\ 215]{chubook}).\smallskip

A linear bijection between JB$^*$-triples is a triple isomorphism if and only if it is an isometry (cf. \cite[Proposition 5.5]{kaup1983riemann} or \cite[Theorems 3.1.7, 3.1.20]{chubook}). Thus, a complex Banach space admits a unique triple product under which it is a JB$^*$-triple. \smallskip

A JBW$^*$-triple is a JB$^*$-triple which is also a dual space. Every JBW$^*$-triple admits a unique (in the isometric sense) predual and its triple product is separately weak$^*$-continuous (see \cite{BaTi}, \cite[Theorems 5.7.20, 5.7.38]{Cabrera-Rodriguez-vol2}).\smallskip

Each idempotent $e$ in a Banach algebra $A$ produces a Peirce decomposition of $A$ as a sum of eigenspaces of the left and right multiplication operators by the idempotent $e$. A.A. Albert extended the classical Peirce decomposition to the setting of Jordan algebras in the middle of the last century. The notion of idempotent might mean nothing in a general JB$^*$-triple. The appropriate alternative is the concept of tripotent. An element $e$ in a JB$^*$-triple $E$ is a \emph{tripotent} if $\{e,e,e\}=e$. It is worth mentioning that when a C$^*$-algebra $A$ is regarded as a JB$^*$-triple with respect to the first triple product given in \eqref{eq triple product JCstar and JBstar}, an element $e\in A$ is a tripotent if and only if it is a partial isometry (i.e., $e e^*$, or equivalently $e^* e$, is a projection) in $A$.\smallskip

In case we fix a tripotent $e$ in a JB$^*$-triple $E$, the classical Peirce decomposition for associative and Jordan algebras extends to a \emph{Peirce decomposition} of $E$ associated with the eigenspaces of the mapping $L(e,e)$, whose eigenvalues are all contained in the set $\{0,\frac12,1\}$. For $j\in\{0,1,2\}$, the (linear) projection $P_{j} (e)$ of $E$ onto the eigenspace, $E_j(e)$, of $L(e, e)$ corresponding to the eigenvalue $\frac{j}{2},$ admits a concrete expression in terms of the triple product as follows: $$\begin{aligned} P_2 (e) &= L(e,e)(2 L(e,e) -id_{E})=Q(e)^2, \nonumber \\ P_1 (e) &= 4 L(e,e)(id_{E}-L(e,e)) =2\left(L(e,e)-Q(e)^2\right),  \hbox{ and } \nonumber \\ P_0 (e) &= (id_{E}-L(e,e)) (id_{E}-2 L(e,e)),
\end{aligned}$$ 
where $Q(e)(x)=\J exe$ for $x\in E$.
The projection $P_{j} (e)$ is known as the \emph{Peirce}-$j$ \emph{projection} associated with $e$. Peirce projections are all contractive (cf. \cite[Corollary 1.2]{Friedman-Russo}), and the JB$^*$-triple $E$ decomposes as the direct sum
$$E= E_{2} (e) \oplus E_{1} (e)\oplus E_0 (e),$$ which is termed the \emph{Peirce decomposition} of $E$ relative to $e$ (see \cite{Friedman-Russo}, \cite[Definition 1.2.37]{chubook} or \cite[Subsection 4.2.2]{Cabrera-Rodriguez-vol1} and \cite[Section 5.7]{Cabrera-Rodriguez-vol2} for more details). In the particular case in which $e$ is a tripotent (i.e. a partial isometry) in a C$^*$-algebra $A$ with initial projection $p_i= e^* e$ and final projection $p_f= e e^*$, the subspaces in the Peirce decomposition are precisely $$A_2(e) =p_f A p_i, \,  A_1(e) = p_f A (1-p_i)\oplus (1-p_f) A p_i,\,   A_0(e) = (1-p_f) A (1-p_i).     $$

A tripotent $e$ in a JB$^*$-triple $E$ is called \emph{complete} if $E_0 (e) =\{0\}$. We shall say that $e$ is \emph{unitary} if $E= E_2(e)$, or equivalently, if $\{e,e,x\}={x}$ for all $x\in E$. Obviously, every unitary is a complete tripotent, but the converse implication is not always true; consider for example a non-surjective isometry $e$ in $B(H)$. A non-zero tripotent $e$ satisfying $E_2(e) = \mathbb{C} e$ is called \emph{minimal}.\smallskip

Note that in a unital JB$^*$ algebra there is another definition of a unitary element  (cf.\ \cite[Definition 4.2.25]{Cabrera-Rodriguez-vol1}). However, it is equivalent
to the above-defined notion as witnessed by the following fact (where condition $(3)$ is the mentioned alternative definition). We will work solely with the notion of unitary tripotent defined above (i.e., with condition $(1)$ from the fact below) but we include these equivalences for the sake of completeness. 

\begin{fact}
Let $B$ be a unital JB$^*$-algebra and let $u\in B$. The following assertions are equivalent.
\begin{enumerate}[$(1)$]
    \item $u$ is a unitary tripotent, i.e., $u$ is a tripotent with $B_2(u)=B$.
    \item $u$ is a tripotent and $u\circ u^*=1$.
    \item $u\circ u^*=1$ and $u^2\circ u^*=u$, i.e., $u^*$ is the Jordan inverse of $u$.
\end{enumerate}
\end{fact}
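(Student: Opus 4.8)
**Proof proposal for the Fact (equivalence of notions of unitary in a unital JB\textsuperscript{*}-algebra).**

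The plan is to prove the cyclic chain of implications $(1)\Rightarrow(2)\Rightarrow(3)\Rightarrow(1)$, using throughout the spectral theory of the commutative JB\textsuperscript{*}-subalgebra generated by a single hermitian element together with the Shirshov--Cohn type theorem: the JB\textsuperscript{*}-subalgebra generated by two hermitian elements and the unit is a JC\textsuperscript{*}-algebra, hence sits inside a C\textsuperscript{*}-algebra, where the computations reduce to familiar associative manipulations.

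\emph{$(1)\Rightarrow(2)$.} Assume $u$ is a tripotent with $B_2(u)=B$. Recall that the triple product on a JB\textsuperscript{*}-algebra is the second one in \eqref{eq triple product JCstar and JBstar}, so $\{u,u,x\}=(u\circ u^*)\circ x + (x\circ u^*)\circ u - (u\circ x)\circ u^*$; since $B_2(u)=B$ means $L(u,u)=id_B$, we get $(u\circ u^*)\circ x + (x\circ u^*)\circ u - (u\circ x)\circ u^* = x$ for all $x\in B$. Put $p:=u\circ u^*$, which is a positive element of $B$. Plug $x=1$: then $(u\circ u^*)\circ 1 + (1\circ u^*)\circ u - (u\circ 1)\circ u^* = p + u^*\circ u - u\circ u^* = p$, so the identity at $x=1$ is automatic; instead, consider $\{u,u,u\}=u$, which by the JB\textsuperscript{*}-algebra triple product gives $2(u\circ u^*)\circ u - (u\circ u)\circ u^* = u$, i.e. $2p\circ u - u^2\circ u^* = u$. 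To extract $p=1$, I would work inside the JC\textsuperscript{*}-algebra generated by the hermitian elements $u+u^*$ and $i(u^*-u)$ (equivalently by $u,u^*,1$); in an enveloping C\textsuperscript{*}-algebra the relation $L(u,u)=id$ forces, upon testing against $x=u^*$ and using the associative expansion of the Jordan triple product, that $uu^*=u^*u=1$. Concretely, applying $L(u,u)=id$ to $x=u^*$ yields $p\circ u^* + (u^*\circ u^*)\circ u - (u\circ u^*)\circ u^* = u^*$, and expanding everything in the C\textsuperscript{*}-algebra one isolates $p=1$ after a short computation. Thus $u\circ u^*=1$, which is $(2)$.

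\emph{$(2)\Rightarrow(3)$ and $(3)\Rightarrow(1)$.} For $(2)\Rightarrow(3)$: assuming $u$ is a tripotent with $u\circ u^*=1$, I must show $u^2\circ u^*=u$. Again pass to a C\textsuperscript{*}-algebra containing $u,u^*,1$. The tripotent condition $\{u,u,u\}=u$ reads $2(u\circ u^*)\circ u - u^2\circ u^* = u$, and substituting $u\circ u^*=1$ gives $2u - u^2\circ u^* = u$, whence $u^2\circ u^* = u$, exactly the remaining relation in $(3)$; and $u\circ u^*=1$ is already assumed, so $(3)$ holds. For $(3)\Rightarrow(1)$: assume $u\circ u^*=1$ and $u^2\circ u^*=u$. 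First note these two relations, written inside the C\textsuperscript{*}-algebra generated by $u,u^*,1$ and combined, force $uu^*=u^*u=1$, i.e. $u$ is a unitary in the C\textsuperscript{*}-sense; indeed $u\circ u^*=1$ says $uu^*+u^*u=2$, and together with $u^2\circ u^*=u$ (i.e. $u^2u^*+u^*u^2=2u$) one deduces $uu^*=u^*u=1$ by a standard argument (multiply, use positivity of $uu^*$ and $u^*u$, and the fact that two positive elements summing to $2$ with the appropriate intertwining relation must each be $1$). Once $u$ is a C\textsuperscript{*}-unitary, the tripotent property $\{u,u,u\}=2(u\circ u^*)\circ u - u^2\circ u^*= 2u - u = u$ holds, so $u$ is a tripotent; and for any $x$, $L(u,u)x = (u\circ u^*)\circ x + (x\circ u^*)\circ u - (u\circ x)\circ u^* = x + (x\circ u^*)\circ u - (u\circ x)\circ u^*$, where the last two terms cancel because with $uu^*=u^*u=1$ one computes $(x\circ u^*)\circ u = (u\circ x)\circ u^*$ directly in the C\textsuperscript{*}-algebra. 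Hence $L(u,u)=id_B$, i.e. $B_2(u)=B$, giving $(1)$.

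\emph{Main obstacle.} The only genuinely non-formal point is the passage from the Jordan identities to the associative identities $uu^*=u^*u=1$: each individual computation lives in the JC\textsuperscript{*}-algebra generated by $u$ and $u^*$, so there is no loss of generality in assuming $B$ is a C\textsuperscript{*}-algebra, but one must be careful that the element $u^*$ appearing in the triple product is the JB\textsuperscript{*}-involution, and that the JB\textsuperscript{*}-triple structure on a JB\textsuperscript{*}-algebra is the \emph{second} product in \eqref{eq triple product JCstar and JBstar}, not the first --- on a JC\textsuperscript{*}-algebra these two happen to coincide, which is precisely what legitimises the reduction. Apart from this bookkeeping, each implication is a couple of lines of C\textsuperscript{*}-algebra computation.
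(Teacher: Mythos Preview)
Your $(2)\Rightarrow(3)$ is fine, but the other two implications have problems.

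For $(1)\Rightarrow(2)$ you actually stumble past the one-line argument. You correctly compute $\{u,u,1\}=(u\circ u^*)+u^*\circ u-u\circ u^*=u\circ u^*$, and since $L(u,u)=\mathrm{id}_B$ gives $\{u,u,1\}=1$, this immediately yields $u\circ u^*=1$. Your claim that ``the identity at $x=1$ is automatic'' is simply wrong: plugging in $x=1$ is exactly what finishes this implication. The detour through $x=u^*$ and an unspecified C$^*$-computation is unnecessary.

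The serious gap is in $(3)\Rightarrow(1)$. The Shirshov--Cohn theorem lets you embed the JB$^*$-subalgebra generated by the \emph{two} hermitian elements $\Re u,\Im u$ (and $1$) into a C$^*$-algebra; it says nothing about the subalgebra generated by $u,u^*$ \emph{and an arbitrary} $x\in B$, which in general requires four hermitian generators. So when you write ``for any $x$, \dots one computes $(x\circ u^*)\circ u=(u\circ x)\circ u^*$ directly in the C$^*$-algebra'', there is no C$^*$-algebra available in which to perform that computation. The identity $L_uL_{u^*}=L_{u^*}L_u$ you need is \emph{not} a universal Jordan identity (it fails, e.g., for $u=e_{12}$ in $M_2$), so it genuinely depends on the unitarity hypothesis and cannot be checked variable-by-variable via Shirshov--Cohn. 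A further, smaller gap: your ``standard argument'' that $uu^*+u^*u=2$ and $u^2u^*+u^*u^2=2u$ force $uu^*=u^*u=1$ in the enveloping C$^*$-algebra is not standard; the two relations only give that $u$ commutes with $uu^*$ (i.e.\ $u$ is quasinormal) together with $uu^*+u^*u=2$, and extracting $uu^*=1$ from this still needs an argument.

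By contrast, the paper does not attempt a direct computation: the equivalence $(1)\Leftrightarrow(3)$ is quoted from \cite[Proposition 4.3]{BraKaUp78} (see also \cite[Theorem 4.2.28]{Cabrera-Rodriguez-vol1}), and for $(1)\Leftrightarrow(2)$ the paper observes that $(2)$ is equivalent to $1\in B_2(u)$ and then invokes \cite[Proposition 6.6]{hamhalter2019mwnc} to conclude $B_2(u)=B$. The implication $(3)\Rightarrow(1)$ really is the nontrivial Braun--Kaup--Upmeier result, and your Shirshov--Cohn strategy cannot replace it without an additional tool such as Macdonald's theorem for identities linear in one variable, which you do not invoke.
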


\begin{proof}
The equivalence $(1)\Leftrightarrow(3)$ is proved in   \cite[Proposition 4.3]{BraKaUp78} (see also \cite[Theorem 4.2.28]{Cabrera-Rodriguez-vol1}). 

To prove the equivalence $(1)\Leftrightarrow(2)$ observe that assertion $(2)$ means that $1=\J uu1$, i.e., $1\in B_2(u)$. It remains to use \cite[Proposition 6.6]{hamhalter2019mwnc}.
\end{proof}

Complete tripotents in a JB$^*$-triple $E$ can be geometrically characterized since a norm-one element $e$ in $E$ is a complete tripotent if and only if it is an extreme point of its closed unit ball (cf. \cite[Lemma 4.1]{BraKaUp78}, \cite[Proposition 3.5]{kaup1977jordan} or \cite[Theorem 4.2.34]{Cabrera-Rodriguez-vol1}).
Consequently, every JBW$^*$-triple contains an abundant collection of complete tripotents.\smallskip

Given a unitary element $u$ in a JB$^*$-triple $E$, the latter becomes a unital JB$^*$-algebra with Jordan product and involution defined by
\begin{equation}\label{eq circ-star}
x\circ_{u} y=\J xuy\mbox{ and }x^{*_u}=\J uxu\qquad\mbox{for }x,y\in E,
\end{equation} see \cite[Theorem 4.1.55]{Cabrera-Rodriguez-vol1}. We even know that $u$ is the unit of this JB$^*$-algebra (i.e., $u\circ_u x=x$ for $x\in E$). Each tripotent $e$ in a JB$^*$-triple $E$ is a unitary in the JB$^*$-subtriple $E_2(e)$, and thus $(E_2(e),\circ_e,*_{e})$ is a unital JB$^*$-algebra. Therefore, since the triple product is uniquely determined by the structure of a JB$^*$-algebra, unital JB$^*$-algebras are in one-to-one correspondence with those JB$^*$-triples admitting a unitary element.\smallskip

A linear subspace $I$ of a JB$^*$-triple $E$ is called a \emph{triple ideal} or simply an \emph{ideal} of $E$ if $\J IEE\subset I$ and $\J EIE\subset I$ (see \cite{horn1987ideal}). Let $I, J$ be two ideals of $E$. We shall say that $I$ and $J$ are orthogonal if $I\cap J =\{0\}$ (and consequently $\{I,J,E\} = \{J,I,E\}=\{0\}$). It is known that every weak$^*$-closed ideal $I$ of a JBW$^*$-triple $M$ is orthogonally complemented, that is, there exists another weak$^*$-closed ideal $J$ of $M$ which is orthogonal to $I$ and $M = I \oplus^{\infty} J$ (see \cite[Theorem 4.2$(4)$ and Lemmata 4.3 and 4.4]{horn1987ideal}). For each weak$^*$-closed ideal $I$ of $M$, we shall denote by $P_{I}$ the natural projection of $M$ onto $I$.\label{eq ideals in JBW} Let us observe that, in this case $P_{I}$ is always a weak$^*$-continuous triple homomorphism.

\subsection{Positive functionals and prehilbertian seminorms}\label{subsec:seminorms}

As in the case of C$^*$-algebras, a functional $\phi$ in the dual space, $B^*$, of a JB$^*$-algebra $B$ is called positive if it maps positive elements to non-negative real numbers. We shall frequently apply that a functional $\phi$ in the dual space of a unital JB$^*$-algebra $B$ is positive if and only if $\|\phi\| = \phi (1)$ (cf. 
\cite[Lemma 1.2.2]{hanche1984jordan}). The same conclusion holds for functionals in the predual of a JBW$^*$-algebra.\smallskip

A positive normal functional $\varphi$ in the predual of a JBW$^*$-algebra $B$ is called \emph{faithful} if $\varphi (a) = 0$ for $a\geq 0$ in $B$ implies $a=0$.\smallskip

If $\phi$ is a positive functional in the dual of a C$^*$-algebra $A,$ and $1$ denotes the unit element in $A^{**}$, the mapping $$(a,b)\mapsto \phi \left(\frac{ a b^* + b^* a}{2} \right)= \phi \{a,b,1\} \ \ (a,b\in A)$$ is a positive semi-definite sesquilinear form on $A\times A$, whose associated prehilbertian seminorm is denoted by $\|x \|_{\phi} = (\phi \{x,x,1\})^{1/2}$. If we consider a positive functional $\phi$ in the dual of a JB$^*$-algebra $B$, the associated prehilbertian seminorm is defined by $\|x \|_{\phi}^2 = \phi \{x,x,1\} =\phi (x\circ x^*),$ where $1$ stands for the unit in $B^{**}$.\smallskip

The lacking of a local order or positive cone in a general JB$^*$-triple, and hence the lacking of positive functionals makes a bit more complicated the definition of appropriate prehilbertian seminorms. Namely, let $\varphi$ be a functional in the predual of JBW$^*$-triple $M$ and let $z$ be a norm-one element in $M$ satisfying $\varphi (z) = \|\varphi\|$. Proposition 1.2 in \cite{barton1987grothendieck} proves that the mapping $M\times M\to \mathbb{C}$, $(x,y)\mapsto \varphi\{x,y,z\}$ is a positive semi-definite sesquilinear form on $M$ which does not depend on the choice of the element $z$ (that is, $\varphi\{x,y,z\} = \varphi\{x,y,\tilde{z}\}$ for every $x,y\in M$ and every $\tilde{z}\in M$ with $\|\tilde{z}\|=1$ and $\varphi(\tilde{z})=\norm{\varphi}$, see \cite[Proposition 5.10.60]{Cabrera-Rodriguez-vol2}). The associated prehilbertian seminorm is denoted by $\norm{x}_{\varphi} =(\varphi\{x,x,z\})^{1/2}$ ($x\in M$). Since the triple product of every JB$^*$-triple is contractive it follows that \begin{equation}\label{eq seminorm inequality} \|x\|_\varphi\le\sqrt{\|\varphi\|}\|x\| \hbox{ for all } x\in M.\end{equation} If $\varphi$ is a non-zero functional in the dual $E^*$ of a JB$^*$-triple $E$, and we regard $E^*$ as the predual of the JBW$^*$-triple $E^{**}$, the prehilbertian seminorm $\norm{\cdot}_{\varphi}$ on $E^{**}$ acts on $E$ by mere restriction.\smallskip

\subsection{Comparison theory of projections and tripotents} Two projections $p,q$ in a C$^*$-algebra $A$ (respectively, in a JB$^*$-algebra $B$) are said to be orthogonal ($p\perp q$ in short) if $ p q = 0$ (respectively, $p\circ q =0$). The relation ``being orthogonal'' can be used to define a natural partial ordering on the set of projections in $A$ (respectively, in $B$) defined by $ p\leq q$ if $q-p$ is a projection and $q-p \perp p$. We write $p < q$ if $p\leq q$ and $p\neq q$.\smallskip

Two tripotents $e$ and $u$ in a JB$^*$-triple $E$ are called \emph{orthogonal} ($e\perp u$ in short) if $\{e,e,u\} = 0$ (equivalently, $u\in M_0 (e)$). It is known that $e\perp u$ if and only if any of the following equivalent reformulations holds:
\begin{enumerate}[$(1)$]
\item $e\in E_0(u)$.
\item $E_2(u)\subset E_0(e)$.
\item $L(u,e)=0$.
\item $L(e,u)=0$.
\item Both $u+e$ and $u-e$ are tripotents.
\item $\{u,u,e\} =0$.
\end{enumerate}
For proofs see \cite[Lemma 3.9]{loos1977bounded}, \cite[Proposition 6.7]{hamhalter2019mwnc} or \cite[Lemma 2.1]{Finite}. The induced partial order defined by this orthogonality on the set of tripotents is given by $e\leq u$ if $u-e$ is a tripotent with $u-e \perp e$.\smallskip

Let $\varphi$ be a non-zero functional in the predual of a JBW$^*$-triple $M$. By Proposition 2 in \cite{Friedman-Russo} (or \cite[Proposition 5.10.57]{Cabrera-Rodriguez-vol2}) there exists a unique tripotent $s(\varphi)\in M$, called the \emph{support tripotent} of $\varphi$, such that $\varphi=\varphi\circ P_2(s(\varphi))$, and $\varphi|_{M_2(s(\varphi))}$ is a faithful positive functional on the JBW$^*$-algebra $M_2(s(\varphi))$. In particular, $\norm{x}_{\varphi}^2=\varphi\{x,x,s(\varphi)\}$ for all $x\in M$.\smallskip

The support tripotent of a non-zero functional $\varphi$ in the predual of a JBW$^*$-triple $M$ is also  the smallest tripotent in $M$ at which $\varphi$ attains its norm, that is, \begin{equation}\label{eq minimality of the support tripotent}  \hbox{$\varphi (u) =\|\varphi\|$ for some tripotent $u\in M$} \Rightarrow s(\varphi)\leq u.
\end{equation}
Namely, the element $P_2(s(\varphi))(u)$ lies in the unit ball of $M_2(s(\varphi))$ because $P_2(s(\varphi))$ is contractive.  Since $\varphi = \varphi\circ P_2(s(\varphi))$ and $\varphi|_{M_2(s(\varphi))}$
is a faithful functional on the JBW$^*$-algebra $M_2(s(\varphi))$, we deduce that $P_2(s(\varphi)) (u) = s(\varphi)$. It follows from \cite[Lemma 1.6 or Corollary 1.7]{Friedman-Russo} that $s(\varphi)\leq u.$ Actually the previous arguments prove  \begin{equation}\label{eq minimality of the support tripotent for elements}  \hbox{$\varphi (a) =\|\varphi\|$ for some element $a\in M$ with $\|a\|=1$} \Rightarrow a= s(\varphi)+P_0 (s(\varphi)) (a).
\end{equation}

Two projections $p$ and $q$ in a von Neumann algebra $W$ are called \emph{(Murray-von Neumann) equivalent} (written $p\sim q$) if there is a partial isometry $e\in W$ whose initial projection is $p$ and whose final projection is $q$. This Murray-von Neumann equivalence is employed to classify projections and von Neumann algebras in terms of their properties. For example a projection $p$ in $W$ is said to be \emph{finite} if there is no projection $q< p$ that is equivalent to $p$. For example, all finite-dimensional projections in $B(H)$ are finite, but the identity operator on $H$ is not finite when $H$ is an infinite-dimensional complex Hilbert space. The von Neumann algebra $W$ is called \emph{finite} if its unit element is a finite projection. The set of all finite projections in the sense of Murray-von Neumann in $W$ forms a (modular) sublattice of the set of all projections in $W$ (see e.g. \cite[Theorem V.1.37]{Tak}). We recall that a projection $p$ in $W$ is {\em infinite} if it is not finite, and {\em properly infinite} if $p\ne 0$ and $zp$ is infinite whenever $z$ is a central projection such that $zp\ne0$ (cf. \cite[Definition V.1.15]{Tak}).\smallskip

In the setting of JBW$^*$-algebras the notion of finiteness was replaced by the concept of modularity, and the Murray-von Neumann equivalence by the relation ``being equivalent by symmetries'', that is, two projections $p,q$ in a JBW$^*$-algebra $N$ are called equivalent (by symmetries) (denoted by $p\stackrel{s}{\sim} q$) if there is a finite set $s_l, s_2 ,\ldots, s_n$ of self-adjoint symmetries (i.e. $s_j =1-2p_j$ for certain projections $p_j$) such that $Q(s_1)\cdots Q(s_n) (p) =q$, where $Q(s_j) (x) =\{s_j,x,s_j\} = 2 (s_j \circ x) \circ s_j - s_j^2 \circ x$ for all $x\in N$ (cf. \cite[\S 10]{Topping1965}, \cite[5.1.4]{hanche1984jordan}, \cite[\S 3]{AlfsenShultzGeometry2003} or \cite[\S 7.1]{Finite}). Unlike Murray-von Neumann equivalence, $p \stackrel{s}{\sim} q$ in $N$ implies $1-p \stackrel{s}{\sim} 1-q$. When $M$ is a von Neumann algebra regarded as a JBW$^*$-algebra, and $p,q$ are projections in $M$, $p\stackrel{s}{\sim}q$ if and only if $p$ and $q$ are unitarily equivalent, i.e. there exists a unitary $u\in M$ such that $u p u^* = q$ (see \cite[Proposition 6.56]{AlfsenShultzStateSpace2001}).
In particular,  $p\stackrel{s}{\sim} q$ implies $p\sim q$.\smallskip

In a recent contribution we study the notion of finiteness in JBW$^*$-algebras and JBW$^*$-triples from a geometric point of view. In the setting of von Neumann algebras, the results by H. Choda, Y. Kijima, and Y. Nakagami assert that a von Neumann algebra $W$ is finite if and only if all the extreme points of its closed unit ball are unitary (see  \cite[Theorem 2]{ChodaKijimaNak69} or \cite[Proof of Theorem 4]{mil}). Therefore, a projection $p$ in $W$ is finite if and only if every extreme point of the closed unit ball of $pWp$ is a unitary in the latter von Neumann algebra. This is the motivation for the notion of finiteness introduced in \cite{Finite}. According to the just quoted reference, a tripotent $e$ in a JBW$^*$-triple $M$ is called
\begin{enumerate}[$\bullet$]
\item {\em finite} if any tripotent $u\in M_2(e)$ which is complete in $M_2(e)$ is already unitary in $M_2(e)$;
\item {\em infinite} if it is not finite;
\item {\em properly infinite} if $e\ne 0$ and for each weak$^*$-closed ideal $I$ of $M$ the tripotent $P_I (e)$ is infinite whenever it is nonzero.
\end{enumerate} If any tripotent in $M$ is finite, we say that $M$ itself is {\em finite}. Finite-dimensional JBW$^*$-triples are always finite \cite[Proposition 3.4]{Finite}. The JBW$^*$-triple $M$ is said to be {\em infinite} if it is not finite. Finally, $M$ is {\em properly infinite} if each nonzero weak$^*$-closed ideal of $M$ is infinite.\smallskip

Every JBW$^*$-triple decomposes as an orthogonal sum of weak$^*$-closed ideals $M_1,$ $M_2$, $M_3$ and $M_4,$ where $M_1$ is a finite JBW$^*$-algebra, $M_2$ is either a trivial space or a properly infinite JBW$^*$-algebra, $M_3$ is a finite JBW$^*$-triple with no nonzero direct summand isomorphic to a JBW$^*$-algebra, and $M_4$ is either a trivial space or $M_4=qV_4$, where $V_4$ is a von Neumann algebra, $q\in V_4$ is a properly infinite projection such that $qV_4$ has no direct summand isomorphic to a JBW$^*$-algebra; we further know that $M_4$ is properly infinite in case that it is not zero (see \cite[Theorem 7.1]{Finite} where a more detailed description is presented). This decomposition applies in the particular case in which $M$ is a JBW$^*$-algebra with the appropriate modifications and simplifications on the summands to avoid those which are not JBW$^*$-algebras.\smallskip

In a von Neumann algebra $W$ the two notions of finiteness coincide for projections (see \cite[Lemma 3.2$(a)$]{Finite}). Every modular projection in a JBW$^*$-algebra is a finite tripotent in the sense above, but the reciprocal is not always true (cf. \cite[Lemma 7.12 and Remark 7.13]{Finite}).\smallskip

Finite JBW$^*$-triples enjoy formidable properties. For example, for each finite tripotent $u$ in a JBW$^*$-algebra $M$ there is a unitary element $e\in  M$ with $u \leq e$ (cf. \cite[Proposition 7.5]{Finite}). More details and properties can be found in \cite{Finite}.\smallskip

A projection $p$ in a von Neumann algebra $W$ is called \emph{abelian} if the subalgebra $pW p$ is abelian (see \cite[Definition V.1.15]{Tak}). The von Neumann algebra $W$ is said to be of \emph{type I} or \emph{discrete} if every nonzero (central) projection contains a nonzero abelian subprojection \cite[Definition V.1.17]{Tak}. In the previous definition the word central can be relaxed (see, for example, \cite[Corollary 4.20]{stra-zsi}).\smallskip

A tripotent $e$ in a JB$^*$-triple is said to be \emph{abelian} if the JB$^*$-algebra $E_2(u)$ is associative, or equivalently, $(E_2(u),\circ_u,*_u)$ is a unital abelian C$^*$-algebra. Obviously, any minimal tripotent is abelian. We further know that every abelian tripotent is finite \cite[Lemma 3.2$(e)$]{Finite}.\smallskip

According to \cite{horn1987classification,horn1988classification} and \cite{horn1987ideal}, a JBW$^*$-triple $M$ is said to be of \emph{type $I$} (respectively, \emph{continuous}) if it coincides with the weak$^*$ closure of the span of all its abelian tripotents (respectively, it contains no non-zero abelian tripotents). Every JBW$^*$-triple can be written as the orthogonal sum of two weak$^*$-closed ideals $M_1$ and $M_2$ such that $M_1$ is of type $I$ and $M_2$ is continuous (any of these summands might be trivial). G. Horn and E. Neher established in \cite{horn1987classification,horn1988classification} structure results describing type $I$ and continuous JBW$^*$-triples. Concretely, every JBW$^*$-triple of type $I$ may be represented in the form
\begin{equation}\label{eq:representation of type I JBW* triples}
 \bigoplus_{j\in J}^{\ell_\infty}A_j\overline{\otimes}C_j, 
\end{equation}
where the $A_j$'s are abelian von Neumann algebras and the $C_j$'s are Cartan factors (the concrete definitions will be presented below in Section \ref{sec:type I}, the reader can also consult \cite{loos1977bounded, kaup1981klassifikation, kaup1997real} for details). To reassure the reader we shall simply note that every Cartan factor $C$ is a JBW$^*$-triple. In the case in which $C$ is a JW$^*$-subtriple of some $B(H)$ and $A$ is an abelian von Neumann algebra, the symbol $A\overline{\otimes}C$ denotes the weak$^*$-closure of the algebraic tensor product $A{\otimes}C$ in the von Neumann tensor product $A\overline{\otimes} B(H)$ (see \cite[Section IV.1]{Tak} and \cite[\S 1]{horn1987classification}). In the remaining cases $C$ is finite-dimensional and $A\overline{\otimes} C$ will stand for the completed injective tensor product (see \cite[Chapter 3]{ryan}).

\section{Majorizing certain seminorms}\label{sec:majorizing}

The main result will be proved using its dual version. The starting point is the following dual version of Theorem~\ref{T:triples}$(2)$.

\begin{thm}[{\cite[Theorem 3]{peralta2001grothendieck}}]\label{T:triples-dual}
Let $M$ be a JBW$^*$-triple, $H$ a Hilbert space and $T:M\to H$ a weak$^*$-to-weak continuous linear operator. Given $\varepsilon>0$, there are norm-one functionals $\varphi_1,\varphi_2\in M_*$ such that
     $$\norm{Tx}\le(\sqrt{2}+\varepsilon)\norm{T}\left(\norm{x}^2_{\varphi_1}+\varepsilon\norm{x}^2_{\varphi_2}\right)^{\frac12}\quad\mbox{ for }x\in M.$$
\end{thm}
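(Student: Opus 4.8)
This is the predual form of Theorem~\ref{T:triples}(2), and it is in fact the statement proved first in \cite{peralta2001grothendieck} (the general case of Theorem~\ref{T:triples}(2) being recovered by applying it to the bidual operator $T^{**}$), so the plan is to prove it directly rather than by a formal duality. Normalise $\|T\|=1$. The one place where weak$^*$-to-weak continuity enters is the observation that $\xi\mapsto\langle T(\cdot),\xi\rangle$ carries $H$ into the \emph{predual} $M_*$: each slice functional $\psi_\xi:=\langle T(\cdot),\xi\rangle$ is weak$^*$-continuous, hence normal. Without this, the functionals produced below would only lie in $M^*$.

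The crux of the construction is to locate a norm-one $\psi\in M_*$ which almost attains its norm ``through'' $T$. Since $x\mapsto\|Tx\|$ is merely weak$^*$-lower semicontinuous on the weak$^*$-compact ball of $M$, the operator $T$ need not attain its norm; I would therefore invoke the smooth variational principle of Poliquin--Zizler \cite{Poliquin-Zizler} (exactly as in \cite{peralta2001grothendieck}), applied to $x\mapsto-\|Tx\|^2$, to obtain $a\in M$ with $\|a\|=1$, $\|Ta\|^2$ arbitrarily close to $1$, at which the perturbed maximum is attained in a robust (strong) way. Set $\xi:=Ta/\|Ta\|$, $\psi:=\psi_\xi$, $\varphi_1:=\psi/\|\psi\|$, and let $e:=s(\varphi_1)$ be the support tripotent, so that $\psi=\psi\circ P_2(e)$ and $\varphi_1|_{M_2(e)}$ is a faithful normal state on the JBW$^*$-algebra $M_2(e)$. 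Relative to the Peirce decomposition $M=M_2(e)\oplus M_1(e)\oplus M_0(e)$ one then checks, using robustness of the near-maximum at $a$, that (up to a controllable error) $T$ is supported on $M_2(e)\oplus M_1(e)$ and that $T\bigl(M_1(e)\oplus M_0(e)\bigr)\perp\xi$ in $H$.

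With this in hand, for $x=x_2+x_1+x_0$ I would split $Tx=\langle Tx,\xi\rangle\,\xi+(Tx)^{\perp}$. The coefficient equals $\psi(x_2)$ up to the variational error, and the Cauchy--Schwarz inequality for the faithful normal state $\varphi_1|_{M_2(e)}$ --- together with positivity of $\{x_1,x_1,e\}$ in $M_2(e)$ --- bounds it by essentially $\|x\|_{\varphi_1}$. The transverse part $(Tx)^{\perp}$ is governed by the compression $S:=P_{\xi^{\perp}}\circ T$, which after the reduction above essentially annihilates the unit $e$ of $M_2(e)$ (here one uses the identity $\|x-\varphi_1(x)e\|_{\varphi_1}^2=\|x\|_{\varphi_1}^2-|\varphi_1(x)|^2$ valid in $M_2(e)$ to pass to the ``traceless part''); a second application of the same scheme to $S$ produces the norm-one functional $\varphi_2\in M_*$ and also supplies the slack needed to absorb the variational errors, which is precisely why $\varphi_2$ enters multiplied by a free $\varepsilon$ and why the seminorm on the right must be perturbed. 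Since the two pieces $\langle Tx,\xi\rangle\,\xi$ and $(Tx)^{\perp}$ are orthogonal in $H$, one adds their squared norms --- each controlled by $\|x\|_{\varphi_1}^2$ (with the $\varphi_2$-correction) --- rather than combining them with a factor $2$, and this is exactly the origin of the constant $\sqrt2$. Assembling and relabelling the $\varepsilon$'s yields $\|Tx\|^2\le(\sqrt2+\varepsilon)^2\bigl(\|x\|_{\varphi_1}^2+\varepsilon\|x\|_{\varphi_2}^2\bigr)$.

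The delicate point --- and exactly where the original argument of Barton and Friedman \cite{barton1987grothendieck} was incomplete --- is the control of the component of $Tx$ transverse to the norm-attaining direction $\xi$ together with the off-diagonal Peirce components: once $T$ fails to attain its norm one cannot keep a single functional and the bare constant $\sqrt2$, so both the variational principle (to trade exact attainment for a robust approximate one) and the auxiliary functional $\varphi_2$ with the $\varepsilon$-perturbed seminorm are structurally forced, not cosmetic. The remaining ingredients --- the behaviour of the norm and of the triple product under a Peirce decomposition, contractivity of $\{\cdot,\cdot,\cdot\}$, and the state-theoretic Cauchy--Schwarz inequality --- are routine.
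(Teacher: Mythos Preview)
The paper does not prove this theorem; it quotes it verbatim from \cite[Theorem~3]{peralta2001grothendieck} and uses it as a black box. So there is no ``paper's own proof'' to compare against---the only thing to assess is whether your sketch is a faithful outline of the argument in the cited reference.

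Your outline captures the right architecture: the Poliquin--Zizler variational principle replaces the (unavailable) norm-attainment, one works with the support tripotent $e=s(\varphi_1)$ of the near-maximizing slice functional, the Peirce decomposition organises the estimate, and the $\sqrt2$ arises from orthogonality in $H$ of the component along $\xi$ and its complement. You also correctly identify why the Barton--Friedman argument needed repair and why an auxiliary functional with an $\varepsilon$-weight is forced.

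The one place where your narrative drifts from the actual mechanism is the origin of $\varphi_2$. It does not come from ``a second application of the same scheme to $S=P_{\xi^\perp}\circ T$''---that would launch an infinite regress, since $S$ has no better reason to attain its norm than $T$ did. In \cite{peralta2001grothendieck} the functional $\varphi_2$ is produced \emph{by} the variational principle itself: one perturbs the (weak$^*$-lsc) function $x\mapsto\|Tx\|^2$ on $B_M$ by a term of the form $\varepsilon\|x\|_{\varphi_2}^2$ so that the perturbed function genuinely attains a strong maximum, and then runs the Barton--Friedman computation on this perturbed quantity. Thus $\varphi_2$ is the perturbation, not the output of a second pass; this is exactly why it carries the free $\varepsilon$ and why the error terms are absorbed cleanly rather than recursively. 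Apart from this mis-attribution, and the inevitable hand-waving at the ``one then checks'' steps, your sketch is on target.
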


We continue by recalling two results from \cite{HKPP-BF}. The first one is essentially the main result and easily implies Theorem~\ref{T:triples}$(3)$. The second one was used to prove one of the particular cases and we will use it several times as well.

\begin{prop}[{\cite[Theorem 2.4]{HKPP-BF}}]\label{P:2-1BF} Let $M$ be a JBW$^*$-triple. Then given any two functionals $\varphi_1,\varphi_2$ in $M_*$, there exists a norm-one functional $\psi\in M_*$ such that
$$\norm{x}_{\varphi_1}^2+\norm{x}_{\varphi_2}^2\le 2(\norm{\varphi_1}+\norm{\varphi_2})\cdot \norm{x}_\psi^2$$ for all $x\in M.$ 
\end{prop}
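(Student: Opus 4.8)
The plan is to reduce the statement to the case of a single functional and then exploit the sesquilinear-form structure of the prehilbertian seminorms. First I would recall that for a functional $\varphi\in M_*$ with support tripotent $e=s(\varphi)$ we have $\norm{x}_\varphi^2=\varphi\{x,x,e\}$, and that $\varphi$ restricted to the JBW$^*$-algebra $M_2(e)$ is a faithful positive normal functional with $\norm{\varphi}=\varphi(e)$. The first step is therefore to build, out of $\varphi_1$ and $\varphi_2$, a single functional that simultaneously ``dominates'' both seminorms. A natural candidate is obtained by passing to a common tripotent $u$ above both $s(\varphi_1)$ and $s(\varphi_2)$ (using that $M_*$-functionals attain their norm at tripotents, and that in a JBW$^*$-triple one can find a tripotent majorizing two given tripotents, e.g.\ inside a suitable Peirce-$2$ subalgebra, or by passing to $s(\varphi_1)\vee s(\varphi_2)$ after an appropriate reduction). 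Once both functionals are ``carried'' by the same unital JB$^*$-algebra $N=M_2(u)$, the seminorms become $\norm{x}_{\varphi_i}^2=\varphi_i(P_2(u)x\circ (P_2(u)x)^{*_u})$, i.e.\ they are governed by positive functionals on $N$.

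The second step is the averaging trick already used in the excerpt to pass from Theorem~\ref{T-C*alg} to Theorem~\ref{T:C*alg-sym}: set $\psi_0=\varphi_1+\varphi_2$, a positive functional on $N$ of norm $\norm{\varphi_1}+\norm{\varphi_2}$, and let $\psi=\psi_0/\norm{\psi_0}$. Since $\varphi_i\le\psi_0$ as positive functionals, for the associated GNS-type sesquilinear forms we get $\varphi_i(y\circ y^*)\le\psi_0(y\circ y^*)$ for every $y\in N$, hence
$$\norm{x}_{\varphi_1}^2+\norm{x}_{\varphi_2}^2\le 2\,\psi_0(P_2(u)x\circ(P_2(u)x)^{*_u})=2(\norm{\varphi_1}+\norm{\varphi_2})\norm{x}_\psi^2,$$
using $P_2(u)$ contractive and the definition of $\norm{\cdot}_\psi$ via the support tripotent of $\psi$ (which sits below $u$). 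This is exactly the claimed inequality, with the factor $2$ coming from $\norm{x}_{\varphi_1}^2+\norm{x}_{\varphi_2}^2\le 2\max_i\norm{x}_{\varphi_i}^2$ absorbed into the sum $\psi_0=\varphi_1+\varphi_2$ rather than a max.

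The main obstacle I anticipate is the first step: unlike in a C$^*$-algebra, where one can simply work in $A^{**}$ with the common unit $1$ and the identity $\norm{x}_\varphi^2=\varphi(x\circ x^*)$ is globally available, in a JBW$^*$-triple the seminorm $\norm{\cdot}_\varphi$ is defined through the \emph{local} element $s(\varphi)$, and the two support tripotents $s(\varphi_1),s(\varphi_2)$ need not be comparable nor even lie in a common Peirce-$2$ subalgebra in an obvious way. Care is needed to produce a tripotent $u$ with $\varphi_i=\varphi_i\circ P_2(u)$ for $i=1,2$ and to check that $\norm{x}_{\varphi_i}^2=\varphi_i\{P_2(u)x,P_2(u)x,u\}$ — this requires the compatibility of Peirce projections for nested tripotents and the fact that the value $\varphi\{x,x,z\}$ is independent of the norming element $z$ (Proposition 1.2 of \cite{barton1987grothendieck}). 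A clean way to handle this is to first treat the case where both functionals are positive functionals on a unital JBW$^*$-algebra (taking $u$ to be the unit), prove the inequality there by the averaging argument above, and then reduce the general JBW$^*$-triple case to this one by restricting to $M_2(u)$ for a tripotent $u\ge s(\varphi_1),s(\varphi_2)$, checking that passing from $M$ to $M_2(u)$ only decreases the left-hand side (via contractivity of $P_2(u)$) while preserving the right-hand side.
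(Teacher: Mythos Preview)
This proposition is not proved in the present paper --- it is quoted from \cite{HKPP-BF} --- so there is no proof here to compare against. Your plan, however, contains a genuine gap.

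The first step, producing a tripotent $u$ with $s(\varphi_1)\le u$ and $s(\varphi_2)\le u$, is in general impossible. In $M=M_2(\ce)$ take $\varphi_1(x)=x_{11}$ and $\varphi_2(x)=x_{12}$; then $s(\varphi_1)=E_{11}$ and $s(\varphi_2)=E_{12}$. The tripotents majorizing $E_{11}$ are exactly $E_{11}$ and $E_{11}+\alpha E_{22}$ with $\abs{\alpha}=1$, while those majorizing $E_{12}$ are $E_{12}$ and $E_{12}+\beta E_{21}$ with $\abs{\beta}=1$; these two families are disjoint. This is not a removable technicality. If such a $u$ \emph{did} exist, both $\varphi_i$ would attain their norms at $u$, and using $u$ as the norming element in the definition of the seminorms would give directly
\[
\norm{x}_{\varphi_1}^2+\norm{x}_{\varphi_2}^2=(\varphi_1+\varphi_2)\J xxu=(\norm{\varphi_1}+\norm{\varphi_2})\,\norm{x}_\psi^2
\]
for all $x\in M$, with $\psi=(\varphi_1+\varphi_2)/(\norm{\varphi_1}+\norm{\varphi_2})$ --- constant $1$, not $2$. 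Your averaging step is fine, but the constant $2$ in the proposition is precisely the price of the step your plan is missing: forcing the two support tripotents below a common one. In the language of this paper that price is paid through Lemma~\ref{L:rotation}, which from the much weaker hypothesis $s(\varphi)\in M_2(p)$ produces $\tilde\varphi$ with $s(\tilde\varphi)\le p$ at the cost of a factor $\sqrt2$ in the seminorm; squaring gives the $2$.

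A secondary issue: the reduction ``from $M$ to $M_2(u)$ via contractivity of $P_2(u)$'' in your last paragraph goes the wrong way. When $s(\varphi)\le u$, Peirce arithmetic yields $\norm{x}_\varphi^2=\norm{P_2(u)x}_\varphi^2+\norm{P_1(u)x}_\varphi^2$ (cf.\ the computation in Example~\ref{ex:alg vs triple}$(a)$), so replacing $x$ by $P_2(u)x$ decreases the quantity you are trying to bound from above.
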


\begin{lemma}[{\cite[Proposition 3.2]{HKPP-BF}}]\label{L:rotation} Let $M$ be a JBW$^*$-triple and let $\varphi\in M_*$.
Assume that $p\in M$ is a tripotent such that $s(\varphi)\in M_2(p)$.
Then there exists a  functional $\tilde{\varphi}\in M_*$ such that {$\norm{\tilde{\varphi}}=\norm{\varphi}$}, 
$s(\tilde{\varphi})\le p$ and $\norm{x}_\varphi\le\sqrt{2}\norm{x}_{\tilde{\varphi}}$ for all $x\in M$.
\end{lemma}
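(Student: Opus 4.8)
Here is how I would approach the proof.

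\medskip

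The plan is to exhibit an explicit ``rotated'' functional and verify the three assertions by hand. After rescaling assume $\norm{\varphi}=1$ and put $e:=s(\varphi)$; by hypothesis $e\in M_2(p)$, so $L(p,p)e=e$, i.e. $\{e,p,p\}=e$, and $\varphi=\varphi\circ P_2(e)$ with $\varphi|_{M_2(e)}$ a faithful positive functional on the JBW$^*$-algebra $M_2(e)$. I would set
$$\tilde\varphi(x):=\varphi\bigl(\{e,p,x\}\bigr)=\varphi\bigl(L(e,p)x\bigr)\qquad(x\in M);$$
it lies in $M_*$ since the triple product of a JBW$^*$-triple is separately weak$^*$-continuous, and when $M=B(H)$, $p=1$, it is simply the symmetrised left part $\tilde\varphi(x)=\tfrac12\varphi(ex+xe)$ of $\varphi$.

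The norm and support are immediate: contractivity of the triple product gives $\abs{\tilde\varphi(x)}\le\norm{\varphi}\norm{e}\norm{p}\norm{x}=\norm{x}$, hence $\norm{\tilde\varphi}\le1$, while $\tilde\varphi(p)=\varphi(\{e,p,p\})=\varphi(e)=1$ and $p$ is a norm-one tripotent; thus $\norm{\tilde\varphi}=\norm{\varphi}$ and $\tilde\varphi$ attains its norm at $p$, whence $s(\tilde\varphi)\le p$ by \eqref{eq minimality of the support tripotent}.

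The substance of the proof is the seminorm bound. Taking $p$ as a point of norm-attainment of $\tilde\varphi$ we have $\norm{x}_{\tilde\varphi}^2=\tilde\varphi\{x,x,p\}=\varphi\bigl(\{e,p,\{x,x,p\}\}\bigr)$ and $\norm{x}_{\varphi}^2=\varphi\{x,x,e\}$, so $\norm{x}_{\varphi}\le\sqrt2\,\norm{x}_{\tilde\varphi}$ is equivalent to
$$\varphi\bigl(2\{e,p,\{x,x,p\}\}-\{x,x,e\}\bigr)\ge0\qquad(x\in M).$$
Since $\varphi=\varphi\circ P_2(e)$ and $\varphi$ is faithful and positive on $M_2(e)$, it is enough to show that $P_2(e)\bigl(2\{e,p,\{x,x,p\}\}-\{x,x,e\}\bigr)$ is a positive element of $M_2(e)$; and as both seminorms are unchanged when $x$ is replaced by $P_2(p)x+P_1(p)x$, one may assume $x\in M_2(p)\oplus M_1(p)$. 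I expect this positivity to follow by invoking the structure theory of JBW$^*$-algebras to reduce to the case in which the ambient algebra is a von Neumann algebra $W$ with $p=1_W$ and $e$ a partial isometry with source and range projections $s=e^*e$ and $r=ee^*$ (the exceptional $H_3(\mathbb O)$-type summands being handled by a separate finite-dimensional computation). In that case the bracketed element collapses to $\tfrac12(exx^*+x^*xe)$, and writing $\varphi(x)=|\varphi|(e^*x)$ for the polar decomposition of $\varphi$, with $|\varphi|$ a positive functional of support $s$, one gets
$$\varphi\bigl(\tfrac12(exx^*+x^*xe)\bigr)=\tfrac12\Bigl(|\varphi|\bigl((sx)(sx)^*\bigr)+|\varphi|\bigl((xe)^*(xe)\bigr)\Bigr)\ge0,$$
which is precisely $\norm{x}_{\varphi}^2\le2\norm{x}_{\tilde\varphi}^2$.

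The formal part of all this is routine; the hard step is the last positivity, and in particular carrying out the reduction to von Neumann algebras while correctly tracking every Peirce component of a general $x\in M$ and disposing of the exceptional summands. I also note that the constant $\sqrt2$ is forced in this argument: in $2\{e,p,\{x,x,p\}\}-\{x,x,e\}$ the diagonal term $\{x,x,e\}$ is cancelled exactly once against one half of $2\{e,p,\{x,x,p\}\}$, leaving the genuinely positive off-diagonal remainder $\tfrac12(exx^*+x^*xe)$; a strictly smaller constant already fails in $2\times 2$ matrices, equality being attained when $x$ realises $e$ as a partial isometry.
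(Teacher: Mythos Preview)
The paper does not give its own proof of this lemma; it is quoted verbatim from \cite[Proposition~3.2]{HKPP-BF} and used as a black box. So there is no in-paper argument to compare against, and I will assess your sketch on its own.

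Your construction $\tilde\varphi(x)=\varphi(\{e,p,x\})$ is the natural one, and your verification that $\norm{\tilde\varphi}=\norm\varphi$ and $s(\tilde\varphi)\le p$ is clean and correct. The von Neumann computation at the end is also right: with $p=1_W$ one does get $2\{e,1,\{x,x,1\}\}-\{x,x,e\}=\tfrac12(exx^*+x^*xe)$, and the polar decomposition of $\varphi$ then shows $\varphi$ of this is nonnegative.

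The genuine gap is in the reduction step. You reduce to $x\in M_2(p)\oplus M_1(p)$ (which is correct, using that $M_2(e)\subset M_2(p)$ and that Peirce projections for compatible tripotents commute), and then propose to pass to a von Neumann algebra $W$ with $p=1_W$ via structure theory of the JBW$^*$-algebra $M_2(p)$. But this only sees $x_2\in M_2(p)$: the component $x_1\in M_1(p)$ lives outside $M_2(p)$ and hence outside any von Neumann envelope of $M_2(p)$, so the identity $2\{e,p,\{x,x,p\}\}-\{x,x,e\}=\tfrac12(exx^*+x^*xe)$ is not even meaningful for $x=x_1$. The elements $\{x_1,x_1,e\}$ and $\{x_1,x_1,p\}$ do land in $M_2(p)$, but they are computed using the triple product of the ambient JBW$^*$-\emph{triple} $M$, not the associative product of $W$; one cannot simply rewrite them as $x_1x_1^*e$, etc. A separate argument is needed for the $M_1(p)$ part, and this is exactly where the content of the original proof in \cite{HKPP-BF} lies. (In the present paper the lemma is only ever applied with $p$ unitary, so $M_1(p)=0$ and your argument would suffice for those applications; but the lemma as stated is more general.)

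A secondary loose end: even for $x\in M_2(p)$, ``structure theory of JBW$^*$-algebras'' gives you a JW$^*$-algebra summand plus an exceptional summand, not a von Neumann algebra directly. For the JW$^*$ part you must pass to a norm-preserving normal extension to an enveloping von Neumann algebra (as the paper does in Section~\ref{sec:JW*} via \cite{Bun01}) and check that the support tripotent is preserved; you allude to this but do not carry it out.
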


The key step to prove our main result is the following proposition which says that for JBW$^*$-algebras a stronger version of Proposition~\ref{P:2-1BF} is achievable.

\begin{prop}\label{P:majorize 1+2+epsilon}
Let $M$ be a JBW$^*$-algebra.
Then given any two functionals $\varphi_1,\varphi_2$ in $M_*$ and $\varepsilon>0$, there exists a norm-one functional $\psi\in M_*$ such that
$$\norm{x}_{\varphi_1}^2+\norm{x}_{\varphi_2}^2\le (\norm{\varphi_1}+2\norm{\varphi_2}+\varepsilon) \norm{x}_\psi^2 \mbox{ for }x\in M.$$
\end{prop}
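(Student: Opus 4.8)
The plan is to reduce to the case where $\varphi_2$ has a very special form, namely where its support tripotent is "orthogonal after a rotation" to that of $\varphi_1$, and then to exploit the fact that on a JBW$^*$-\emph{algebra} we have a unit $1$ available, so that every support tripotent $s(\varphi_i)$ sits inside $M_2(v)$ for some unitary tripotent $v$ (indeed, by the structure theory recalled in Section~\ref{sec: notation}, in a JBW$^*$-algebra every tripotent is dominated by a unitary — this is where the algebra hypothesis is genuinely used, and it is the analogue of the passage from a partial isometry to a unitary in the von Neumann case). First I would normalize and assume $\varphi_1,\varphi_2$ are nonzero and write $e_i=s(\varphi_i)$. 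The asymmetry between the coefficients $1$ and $2$ in front of $\norm{\varphi_1}$ and $\norm{\varphi_2}$ strongly suggests that $\varphi_1$ is to be treated "as is", while $\varphi_2$ is to be passed through Lemma~\ref{L:rotation}, which costs exactly a factor $\sqrt2$ (hence $2$ after squaring) but relocates its support tripotent below a prescribed tripotent $p$.

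The key steps, in order: (1) Fix a unitary tripotent $u\in M$ (equivalently, the unit of $M$ as a JB$^*$-algebra) so that $e_1,e_2\in M_2(u)=M$; this is automatic. (2) Using the finiteness/structure results, or more directly the fact that in a JBW$^*$-algebra $e_1$ extends to a unitary, produce a tripotent $p\in M$ with $p\perp e_1$ (or at least with $e_1\in M_2$ of a suitable complementary tripotent) such that $e_2\in M_2(u)$ still allows $s(\varphi_2)$ to be moved below $p$ via Lemma~\ref{L:rotation}. Concretely: let $p$ be a unitary tripotent in the Peirce $0$-space of $e_1$ \emph{relative to a unitary extension of $e_1$} — i.e. write $1 = e_1 + f$ with $f$ a projection-like tripotent orthogonal to $e_1$ inside $M$, which is possible because $M$ has a unit and $e_1$ is a tripotent dominated by $1$. (3) Apply Lemma~\ref{L:rotation} to $\varphi_2$ and this $p$: since $s(\varphi_2)\in M=M_2(1)$ and $1\in M_2(p')$ for the appropriate enveloping tripotent, we get $\tilde\varphi_2\in M_*$ with $\norm{\tilde\varphi_2}=\norm{\varphi_2}$, $s(\tilde\varphi_2)\le p$, and $\norm{x}_{\varphi_2}^2\le 2\norm{x}_{\tilde\varphi_2}^2$ for all $x$. (4) Now $s(\varphi_1)=e_1\perp p\ge s(\tilde\varphi_2)$, so $e_1\perp s(\tilde\varphi_2)$, and for orthogonal support tripotents the two seminorms are "compatible": the functional $\psi_0=\frac{1}{\norm{\varphi_1}+\norm{\tilde\varphi_2}}(\varphi_1+\tilde\varphi_2)$ has norm one (norms add because $\varphi_1$ and $\tilde\varphi_2$ live on orthogonal Peirce-$2$ blocks) and satisfies $\norm{x}_{\varphi_1}^2+\norm{x}_{\tilde\varphi_2}^2 = (\norm{\varphi_1}+\norm{\tilde\varphi_2})\norm{x}_{\psi_0}^2$ exactly. (5) Combine: $\norm{x}_{\varphi_1}^2+\norm{x}_{\varphi_2}^2\le \norm{x}_{\varphi_1}^2+2\norm{x}_{\tilde\varphi_2}^2\le 2\big(\norm{x}_{\varphi_1}^2+\norm{x}_{\tilde\varphi_2}^2\big)$ — but this gives coefficient $2(\norm{\varphi_1}+\norm{\varphi_2})$, not the claimed $\norm{\varphi_1}+2\norm{\varphi_2}+\varepsilon$. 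So step (5) must be done more carefully: keep $\norm{x}_{\varphi_1}^2$ untouched and only double the $\tilde\varphi_2$ term, then apply the orthogonal-support identity to the \emph{pair} $(\varphi_1, 2\tilde\varphi_2)$, i.e. to $\varphi_1$ and the functional $2\tilde\varphi_2$ of norm $2\norm{\varphi_2}$ whose support tripotent is still $\le p\perp e_1$; this yields $\psi$ of norm one with $\norm{x}_{\varphi_1}^2+2\norm{x}_{\tilde\varphi_2}^2=(\norm{\varphi_1}+2\norm{\varphi_2})\norm{x}_\psi^2$, which is even cleaner than required (the $\varepsilon$ then absorbs any slack if the orthogonality is only approximate).

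The main obstacle I anticipate is step (2)–(3): arranging that $s(\varphi_2)$ can actually be moved to a tripotent orthogonal to $s(\varphi_1)$, rather than merely to one below the fixed unit. The clean statement "$e_1\perp s(\tilde\varphi_2)$" requires that the unitary extension $1=e_1+f$ of $e_1$ exists with $f$ a genuine tripotent in $M$ orthogonal to $e_1$ — this uses that $e_1$, being a tripotent in a unital JB$^*$-algebra, has $1-P_2(e_1)(1)$ available, but one must check $1-e_1\circ e_1^*$-type computations to confirm the relevant element is a tripotent orthogonal to $e_1$; equivalently one invokes that a tripotent in $M$ is a partial-isometry-like object and $1$ is its natural "completion". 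If $s(\varphi_2)$ is not already in $M_2$ of this $f$, one may need to first enlarge via another application of Lemma~\ref{L:rotation} or an approximation argument, which is exactly where the harmless $\varepsilon$ enters. A secondary technical point is justifying the additivity of norms for functionals supported on orthogonal Peirce-$2$ blocks of a JBW$^*$-algebra and the exact seminorm identity in step (4); both follow from the Peirce calculus and the characterization of $\norm{\cdot}_\varphi$ via the support tripotent recalled before Subsection~\ref{subsec:seminorms}, but they should be stated as a small lemma. Everything else is routine bookkeeping with the contractivity of Peirce projections and inequality~\eqref{eq seminorm inequality}.
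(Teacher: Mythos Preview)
Your proposal has a genuine gap at the very first structural step. You assert that ``in a JBW$^*$-algebra every tripotent is dominated by a unitary'' and then try to write $1=e_1+f$ with $f\perp e_1$. This is false in general: take $M=B(H)$ with $H$ infinite-dimensional and let $e_1$ be a proper isometry (the unilateral shift, say). Then $e_1^*e_1=1$, so $M_0(e_1)=(1-e_1e_1^*)M(1-e_1^*e_1)=\{0\}$, and the only tripotent orthogonal to $e_1$ is $0$; there is no unitary $w\ge e_1$. The property you want holds precisely for \emph{finite} tripotents (this is \cite[Proposition 7.5]{Finite}), and a JBW$^*$-algebra in which every tripotent is finite is by definition a finite JBW$^*$-algebra. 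For such algebras the proposition is indeed easy (Observation~\ref{obs:finite JBW* algebras}), but the general case is the whole point. Your step (3) compounds the problem: even granting an orthogonal complement $f$, Lemma~\ref{L:rotation} applied with $p=f$ requires $s(\varphi_2)\in M_2(f)\subset M_0(e_1)$, which you have no control over.

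The paper's route avoids both issues. Rather than trying to extend $s(\varphi_1)$ to a unitary, it \emph{replaces} $\varphi_1$ by a nearby $\tilde\varphi_1$ whose support tripotent is already $\le w$ for some unitary $w$, at the cost of the factor $(1+\varepsilon)$; this is Proposition~\ref{P:key decomposition alternative}, and it is the substantial ingredient (its proof occupies Sections~\ref{sec:type I} and~\ref{sec:JW*}). Once that is in hand, the unitarity of $w$ gives $M_2(w)=M\ni s(\varphi_2)$ for free, so Lemma~\ref{L:rotation} applies to $\varphi_2$ with $p=w$ and produces $\psi_2$ with $s(\psi_2)\le w$ and $\norm{\cdot}_{\varphi_2}^2\le 2\norm{\cdot}_{\psi_2}^2$. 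The combination step then uses not orthogonality of supports but the weaker (and here automatic) fact that both $s(\tilde\varphi_1)$ and $s(\psi_2)$ lie below the \emph{same} tripotent $w$: since both functionals attain their norms at $w$, one has $\norm{\cdot}_{\alpha\tilde\varphi_1+\beta\psi_2}^2=\alpha\norm{\cdot}_{\tilde\varphi_1}^2+\beta\norm{\cdot}_{\psi_2}^2$ and $\norm{\alpha\tilde\varphi_1+\beta\psi_2}=\alpha\norm{\tilde\varphi_1}+\beta\norm{\psi_2}$ for $\alpha,\beta\ge0$. Your instinct about where the factor $2$ goes (on $\varphi_2$, via the rotation lemma) and your corrected step (5) are right; what is missing is the mechanism that puts $\varphi_1$ in position, and that mechanism is not an orthogonal completion of $s(\varphi_1)$ but the perturbation result Proposition~\ref{P:key decomposition alternative}.
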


Using this proposition we will easily deduce the main result in Section~\ref{sec:proofs} below. Proposition \ref{P:majorize 1+2+epsilon} will be proved using the following result.

\begin{prop}\label{P:key decomposition alternative} Let $M$ be a JBW$^*$-algebra, $\varphi\in M_*$ and $\varepsilon>0$. Then there are a functional $\tilde\varphi\in M_*$
and a unitary element $w\in M$ such that $$\norm{\tilde\varphi}\le\norm{\varphi}, \quad s(\tilde\varphi)\le w \quad\mbox{ and }
\norm{\cdot}^2_\varphi\le(1+\ep)\norm{\cdot}^2_{\tilde\varphi}.$$
\end{prop}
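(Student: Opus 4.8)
The plan is to reduce to the finite/type~I situation handled by Proposition~\ref{P:type I approx} (which provides the canonical decomposition of normal functionals) together with Lemma~\ref{L:rotation}, and to deal with the properly infinite part separately. First I would invoke the structural decomposition of a JBW$^*$-algebra $M$ (the JBW$^*$-algebra version of the decomposition $M=M_1\oplus M_2$ recalled on p.~\pageref{eq:representation of type I JBW* triples}, i.e. $M$ splits as an $\ell_\infty$-sum of a finite JBW$^*$-algebra and a properly infinite one). Writing $\varphi=\varphi|_{M_1}\oplus\varphi|_{M_2}$ accordingly and noting that $\norm{\cdot}_\varphi^2=\norm{\cdot}_{\varphi|_{M_1}}^2+\norm{\cdot}_{\varphi|_{M_2}}^2$ splits along the same orthogonal decomposition, it suffices to treat each summand and then recombine: if $w_i$ is a unitary in $M_i$ and $\tilde\varphi_i\in (M_i)_*$ works for $\varphi|_{M_i}$, then $w_1\oplus w_2$ is a unitary in $M$ and $\tilde\varphi_1\oplus\tilde\varphi_2$ works for $\varphi$, since the support tripotent and the prehilbertian seminorm both respect the $\ell_\infty$-sum and $\norm{\tilde\varphi_1\oplus\tilde\varphi_2}=\max_i\norm{\tilde\varphi_i}\le\norm{\varphi}$.

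For the finite part, the idea is that in a finite JBW$^*$-algebra every tripotent lies below a unitary (cf.\ \cite[Proposition 7.5]{Finite}); in particular $s(\varphi)\le e$ for some unitary $e\in M_1$, so $s(\varphi)\in (M_1)_2(e)$. Then Lemma~\ref{L:rotation} applied with $p=e$ produces $\tilde\varphi$ with $\norm{\tilde\varphi}=\norm{\varphi}$, $s(\tilde\varphi)\le e$ and $\norm{\cdot}_\varphi\le\sqrt2\,\norm{\cdot}_{\tilde\varphi}$. This is not good enough, because we need the factor $(1+\varepsilon)$ rather than $2$; here is where Proposition~\ref{P:type I approx} must enter, giving a finer decomposition of $\varphi$ into pieces whose support tripotents are ``small'' relative to suitable unitaries, so that the rotation costs only a factor $(1+\varepsilon)$ instead of $\sqrt2$. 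Concretely, I would decompose $\varphi$ as a sum (or norm-convergent series) $\sum_n \varphi_n$ with $\sum_n\norm{\varphi_n}=\norm{\varphi}$, each $\varphi_n$ supported on a part where the Schmidt-type spectral data is nearly constant, choose a single unitary $w$ simultaneously dominating enough of the relevant tripotents, and apply the sharper rotation estimate to each $\varphi_n$, finally summing. The properly infinite part should be easier: there one can absorb everything into a unitary directly (every JBW$^*$-algebra of that type has an abundance of unitaries and $s(\varphi)$ sits below one), so Lemma~\ref{L:rotation}, or even a direct argument, yields the result with constant $1$ — though one must double-check whether the properly infinite case genuinely gives constant $1$ or whether one again needs the $(1+\varepsilon)$ slack, in which case an analogue of Proposition~\ref{P:type I approx} or an approximation by the finite case is invoked.

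The main obstacle I anticipate is precisely the passage from the crude $\sqrt2$ in Lemma~\ref{L:rotation} to the near-isometric $(1+\varepsilon)$: Lemma~\ref{L:rotation} only knows that $s(\varphi)$ sits inside $M_2(p)$, and the worst case (where $s(\varphi)$ is ``half'' of $p$, like a non-surjective isometry versus a unitary) genuinely costs $\sqrt2$. To beat this one needs the extra information from the canonical decomposition in Proposition~\ref{P:type I approx} — essentially a spectral/measurable Schmidt decomposition (Theorem~\ref{T:measurable Schmidt}) showing that $\varphi$ can be split into countably many pieces each of which is, up to $\varepsilon$, supported by a tripotent that is ``almost unitary'' in a suitable corner, making the rotation cheap. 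Managing the bookkeeping of these countably many pieces, choosing one unitary $w$ that works for all of them simultaneously, and keeping the norm estimate $\norm{\tilde\varphi}\le\norm{\varphi}$ (rather than $\norm{\varphi}(1+\varepsilon)$) under the summation is the delicate part; I would handle it by a geometric-series choice of the $\varepsilon_n$'s and by exploiting that on a JBW$^*$-algebra, as opposed to a general JBW$^*$-triple, unitary tripotents exist and are plentiful, which is exactly the feature that distinguishes Proposition~\ref{P:majorize 1+2+epsilon} from the weaker Proposition~\ref{P:2-1BF}.
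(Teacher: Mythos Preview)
Your proposal has the structure of the argument backwards and contains a genuine gap. The finite part is trivial, not hard: if $M_1$ is a finite JBW$^*$-algebra then $s(\varphi)\le w$ for some unitary $w$ (this is exactly Observation~\ref{obs:finite JBW* algebras}), so one takes $\tilde\varphi=\varphi$ and the constant is $1$ --- no rotation, no Proposition~\ref{P:type I approx}, no $(1+\varepsilon)$ slack needed. Your invocation of Lemma~\ref{L:rotation} here is unnecessary and is what leads you to the spurious $\sqrt2$. (Incidentally, the predual of an $\ell_\infty$-sum is an $\ell_1$-sum, so $\norm{\tilde\varphi_1\oplus\tilde\varphi_2}=\norm{\tilde\varphi_1}+\norm{\tilde\varphi_2}$, not the maximum; this does not break the recombination but should be corrected.)

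The real gap is in the properly infinite part. Your claim that there ``$s(\varphi)$ sits below a unitary'' is false: already in $B(H)$ with $H$ separable infinite-dimensional one can have a normal functional whose support tripotent is a partial isometry with initial projection $1$ and final projection of corank one, hence not majorized by any unitary. This is precisely why work is needed. Moreover, Proposition~\ref{P:type I approx} only covers finite and type~I JBW$^*$-algebras; a general properly infinite JBW$^*$-algebra may have continuous (type~II$_\infty$ or type~III) summands, for which no Schmidt-type decomposition is available and your outline says nothing. The paper's route is different: it first proves Proposition~\ref{P:key decomposition} for JW$^*$-algebras by using the Russo--Dye theorem to find a unitary $w$ with $\varphi(w)$ close to $\norm{\varphi}$, together with perturbation estimates (Lemmata~\ref{l norm-one functional close enough to states at a unitary}--\ref{l ad-hoc 2}) to control the error $\varphi\J xx{s(\varphi)-w}$ by a small positive functional $\varphi_2$. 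Proposition~\ref{P:key decomposition alternative} then follows by applying Lemma~\ref{L:rotation} only to the small error $\varphi_2$ (costing a factor $2$ on a term of norm $<\varepsilon/2$), not to $\varphi$ itself.
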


This proposition will be proved at the beginning of Section~\ref{sec:proofs} using the results from Sections~\ref{sec:type I} and~\ref{sec:JW*}. Let us now show that it implies Proposition~\ref{P:majorize 1+2+epsilon}.

\begin{proof}[Proof of Proposition~\ref{P:majorize 1+2+epsilon} from Proposition~\ref{P:key decomposition alternative}.]
Assume that $M$ is a JBW$^*$-algebra, $\varphi_1,\varphi_2\in M_*$ and $\varepsilon>0$.  
Let $\tilde\varphi_1\in M_*$ and $w\in M$ correspond to $\varphi_1$ and $\frac{\varepsilon}{\norm{\varphi_1}}$ by Proposition~\ref{P:key decomposition alternative}. 
Since $w$ is unitary, we have $M_2(w)=M$, hence we may apply Lemma~\ref{L:rotation} to get $\psi_2\in M_*$ such that
$$s(\psi_{2})\le w, \ \norm{\psi_{2}}\le\norm{\varphi_{2}},\ \norm{\cdot}_{\varphi_{2
}}\le\sqrt{2}\norm{\cdot}_{\psi_{2}}.
$$
Then
$$\begin{aligned}
 \norm{\cdot}_{\varphi_1}^2+\norm{\cdot}_{\varphi_2}^2&\le
 \left(1+\frac{\varepsilon}{\norm{\varphi_1}}\right)\norm{\cdot}_{\tilde\varphi_{1}}^2+\norm{\cdot}_{\varphi_2}^2\le
 \left(1+\frac{\varepsilon}{\norm{\varphi_1}}\right)\norm{\cdot}_{\tilde\varphi_{1}}^2+2\norm{\cdot}_{\psi_2}^2\\
 &=\norm{\cdot}_{\left(1+\frac{\varepsilon}{\norm{\varphi_1}}\right)\tilde\varphi_{1}+2\psi_2}^2
 =\left(\left(1+\frac{\varepsilon}{\norm{\varphi_1}}\right)\norm{\tilde\varphi_{1}}+2\norm{\psi_2}\right)\norm{\cdot}_\psi^2,
 \end{aligned}
$$
where $$\psi=\frac{(1+\frac{\varepsilon}{\norm{\varphi_1}})\tilde\varphi_{1}+2\psi_2}{(1+\frac{\varepsilon}{\norm{\varphi_1}})\norm{\tilde\varphi_{1}}+2\norm{\psi_2}}.$$
(Note that the first equality follows from the fact that the support tripotents of both functionals are below $w$.)
Since the functionals $\tilde\varphi_{1}$ and $\psi_2$  attain their norms at $w$, we deduce that $\norm{\psi}=1$. It remains to observe
that
$$\left(1+\frac{\varepsilon}{\norm{\varphi_1}}\right)\norm{\tilde\varphi_{1}}+2\norm{\psi_2}\le \norm{\varphi_1}+\varepsilon+2\norm{\varphi_2}.$$
\end{proof}

\section{Finite or type I JBW$^*$-algebras}\label{sec:type I}

The aim of this section is to prove a stronger version of Proposition~\ref{P:key decomposition alternative} for a large subclass of JBW$^*$-algebras (see Proposition \ref{P:type I approx}). We follow the notation from \cite{Finite} recalled in Section \ref{sec: notation}.

Since in a finite JBW$^*$-algebra any tripotent is majorized by a unitary one (cf. \cite[Lemma 3.2(d)]{Finite}), we get the following observation.

\begin{obs}\label{obs:finite JBW* algebras}
Let $M$ be a finite JBW$^*$-algebra. Then Proposition~\ref{P:key decomposition alternative} holds for $M$ in a very strong version -- one can take $\tilde\varphi=\varphi$ and $\varepsilon =0$.
\end{obs}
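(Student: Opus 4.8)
The plan is to observe that in the finite case there is essentially nothing to prove beyond invoking the comparison-theoretic fact recorded immediately before the statement. Given $\varphi\in M_*$, I would simply put $\tilde\varphi=\varphi$; then two of the three conclusions of Proposition~\ref{P:key decomposition alternative}, namely $\norm{\tilde\varphi}\le\norm{\varphi}$ and $\norm{\cdot}^2_\varphi\le(1+\varepsilon)\norm{\cdot}^2_{\tilde\varphi}$, hold trivially, with equality, and for \emph{every} $\varepsilon\ge0$ --- in particular for $\varepsilon=0$. So the only genuine task is to exhibit a unitary $w\in M$ with $s(\tilde\varphi)=s(\varphi)\le w$.

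For $\varphi=0$ this is immediate: any unitary works, for instance $w=1$, the unit of the JBW$^*$-algebra $M$ (recall that $0$ is a tripotent and $0\le w$ for every tripotent $w$). For $\varphi\ne0$, the support tripotent $s(\varphi)$ is a well-defined nonzero tripotent of $M$ (see Subsection~\ref{subsec:seminorms}). Since $M$ is a \emph{finite} JBW$^*$-algebra, every tripotent of $M$ --- in particular $s(\varphi)$ --- is a finite tripotent, this being precisely what finiteness of a JBW$^*$-triple means. By the fact recalled just before the statement, that every finite tripotent in a JBW$^*$-algebra is majorized by a unitary one (cf.\ \cite[Lemma 3.2(d)]{Finite}, see also \cite[Proposition 7.5]{Finite}), there is a unitary $w\in M$ with $s(\varphi)\le w$. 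Combined with the previous paragraph, the pair $\tilde\varphi=\varphi$ and $w$ witnesses Proposition~\ref{P:key decomposition alternative} for $M$ with $\varepsilon=0$, which is the asserted strengthening.

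I do not expect any real obstacle here. The whole substance is carried by the cited comparison result for finite JBW$^*$-algebras, and the role of finiteness is exactly that the support tripotent --- the only tripotent that enters this problem --- can be dominated by a unitary element. The observation is recorded mainly in order to dispose of this easy case before the genuinely new work, concerning type I JBW$^*$-algebras and Jordan subalgebras of von Neumann algebras, is taken up in Sections~\ref{sec:type I} and~\ref{sec:JW*}.
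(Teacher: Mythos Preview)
Your proposal is correct and follows exactly the paper's own reasoning: the observation is stated in the paper as an immediate consequence of the sentence preceding it, namely that in a finite JBW$^*$-algebra every tripotent is majorized by a unitary one, and you invoke precisely this fact to dominate $s(\varphi)$ by a unitary $w$ while taking $\tilde\varphi=\varphi$.
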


There is a larger class of JBW$^*$-algebras for which we get a stronger and  canonical version of Proposition~\ref{P:key decomposition alternative}. The concrete result appears in the content of the following proposition. The exact relationship with Proposition~\ref{P:key decomposition alternative} will be explained in
Remark \ref{Rem} (1) below.

We first recall that, in the setting of JBW$^*$-triples, two normal functionals $\varphi$ and $\psi$ in the predual of a JBW$^*$-triple $M$ are called (\emph{algebraically}) \emph{orthogonal} (written $\varphi\perp \psi$) if their support tripotents are orthogonal in $M$---that is, $s(\varphi)\perp s(\psi)$ (cf. \cite{FriRu87,EdRu01}). It is shown in \cite[Lemma\  2.3]{FriRu87} (see also \cite[Theorem 5.4]{EdRu01}) that $\varphi,\psi\in M_*$ are orthogonal if and only if they are ``geometrically'' \emph{$L$-orthogonal} in $M_*$ i.e., $\|\varphi \pm \psi\| = \|\varphi\| + \|\psi\|$.
In particular $\norm{\cdot}_{\varphi+\psi}^2=\norm{\cdot}_\varphi^2 + \norm{\cdot}_\psi^2$ if $\varphi$ and $\psi$ are orthogonal because in this case $\varphi$, $\psi$ and $\varphi+\psi$ attain their respective norms at $s(\varphi)+s(\psi)$.

\begin{prop}\label{P:type I approx}
Let $M$ be a JBW$^*$-algebra which is triple-isomorphic to a direct sum $M_1\oplus^{\ell_\infty}M_2$, where $M_1$ is a finite JBW$^*$-algebra and $M_2$ is a type I JBW$^*$-algebra. Let $\varphi\in M_*$ be arbitrary. Then for each $\varepsilon>0$ there are two functionals $\varphi_1,\varphi_2\in M_*$ such that
\begin{enumerate}[$(i)$]
    \item $\varphi=\varphi_1+\varphi_2$;
    \item $\varphi_1\perp\varphi_2$;
    \item $\norm{\varphi_2}<\varepsilon$;
    \item $s(\varphi_1)$ is a finite tripotent in $M$.
\end{enumerate}
\end{prop}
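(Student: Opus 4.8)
The plan for Proposition~\ref{P:type I approx} is to reduce, via the structure theory, to a single ``homogeneous'' summand $A\overline{\otimes}C$ with $C$ an infinite-rank Cartan factor, to dispose of the finite part for free, and then to truncate a \emph{measurable} Schmidt decomposition of the operator density field representing $\varphi$.

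\textbf{Reduction.} First split $\varphi=\varphi|_{M_1}+\varphi|_{M_2}$; the two parts live in the orthogonal weak$^*$-closed ideals $M_1,M_2$, hence are $L$-orthogonal (in particular mutually orthogonal as functionals), and $s(\varphi|_{M_1})$ is automatically a finite tripotent of $M$ since $M_1$ is finite and finiteness of a tripotent $e$ refers to $M_2(e)\subseteq M_1$ alone. So it suffices to treat $M=M_2$ of type $I$. Apply the Horn--Neher representation $M\cong\bigoplus_{j\in J}^{\ell_\infty}A_j\overline{\otimes}C_j$ with $A_j$ abelian von Neumann algebras and $C_j$ Cartan factors --- each admitting a unitary tripotent, since $M$ is unital and each summand inherits a unit --- together with the $\ell_1$-decomposition $M_*=\bigoplus_j^{\ell_1}(A_j\overline{\otimes}C_j)_*$, writing $\varphi=\sum_j\varphi_j$ with $\sum_j\norm{\varphi_j}=\norm{\varphi}<\infty$. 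Let $M^f$ be the $\ell_\infty$-sum of $M_1$ with all summands $A_j\overline{\otimes}C_j$ for which $C_j$ has finite rank; since $M_1$ is finite and each such $A_j\overline{\otimes}C_j$ is finite, $M^f$ is a finite JBW$^*$-algebra, so $s(\varphi|_{M^f})$ is a finite tripotent of $M$ and I put $\varphi|_{M^f}$ into $\varphi_1$. The remaining indices $J'\subseteq J$ correspond to infinite-rank Cartan factors which, having a unitary, are of the form $C_j=B(H_j)$, $S(H_j)$ or $A(H_j)$ with $\dim H_j=\infty$. Choose a finite $J_0\subseteq J'$, of cardinality $m$ say, with $\sum_{j\in J'\setminus J_0}\norm{\varphi_j}<\varepsilon/2$, and put $\varphi_j$, $j\in J'\setminus J_0$, into $\varphi_2$. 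It then remains, for each of the finitely many $j\in J_0$, to split $\psi:=\varphi_j$ on $N:=A\overline{\otimes}C$ (with $A$ abelian von Neumann, $C\in\{B(H),S(H),A(H)\}$, $\dim H=\infty$) as $\psi=\psi_1+\psi_2$ with $\psi_1\perp\psi_2$, $\norm{\psi_2}<\varepsilon/(2m)$ and $s(\psi_1)$ finite.

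\textbf{The homogeneous infinite piece.} After restricting to the ($\sigma$-finite) central support of $\psi$, write $A=L^\infty(\mu)$; then $\psi$ is represented by a weakly measurable field $\omega\mapsto\rho(\omega)$ of trace-class operators lying in $C\subseteq B(H)$ with $\int\norm{\rho(\omega)}_1\,\di\mu(\omega)<\infty$. I apply the measurable Schmidt decomposition (Theorem~\ref{T:measurable Schmidt}) to get measurable $s_1\geq s_2\geq\cdots\geq0$ and measurable orthonormal systems $(u_k(\omega))_k$, $(v_k(\omega))_k$ with $\rho(\omega)=\sum_{k\geq1}s_k(\omega)\,\langle\,\cdot\,,v_k(\omega)\rangle\,u_k(\omega)$; for $C=S(H)$ or $A(H)$ one takes instead the symmetry-adapted (Takagi, resp.\ Youla) form, so that every finite truncation again takes values in $C$. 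With $\rho_n(\omega)=\sum_{k\leq n}s_k(\omega)\langle\,\cdot\,,v_k(\omega)\rangle u_k(\omega)$, let $\psi_1^{(n)},\psi_2^{(n)}$ be represented by $\rho_n$ and $\rho-\rho_n$. Then: (a) $\norm{\psi_2^{(n)}}=\int\sum_{k>n}s_k(\omega)\,\di\mu(\omega)\to0$ as $n\to\infty$ by monotone convergence, because $\sum_{k\geq1}s_k(\omega)=\norm{\rho(\omega)}_1\in L^1(\mu)$, so one fixes $n$ making this value $<\varepsilon/(2m)$; (b) at a.e.\ $\omega$ the left and right supports of $\rho_n(\omega)$ are orthogonal to those of $\rho(\omega)-\rho_n(\omega)$, so the support tripotents satisfy $s(\psi_1^{(n)})\perp s(\psi_2^{(n)})$, i.e.\ $\psi_1^{(n)}\perp\psi_2^{(n)}$; (c) $s(\psi_1^{(n)})$ is a partial isometry of rank $\leq n$ at a.e.\ $\omega$, so its Peirce-$2$ subalgebra is a JBW$^*$-algebra of rank $\leq n$ (a corner of $N$), hence finite, and $s(\psi_1^{(n)})$ is a finite tripotent.

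\textbf{Assembly and main obstacle.} Set $\varphi_1=\varphi|_{M^f}+\sum_{j\in J_0}\psi_1^{(n_j)}$ and $\varphi_2=\sum_{j\in J'\setminus J_0}\varphi_j+\sum_{j\in J_0}\psi_2^{(n_j)}$. Any component of $\varphi_1$ and any component of $\varphi_2$ are orthogonal --- either because they live in distinct summands of $M$, or (for the two components of index $j\in J_0$) because $\psi_1^{(n_j)}\perp\psi_2^{(n_j)}$ --- so $\varphi_1,\varphi_2$ are $L$-orthogonal, whence $\varphi=\varphi_1+\varphi_2$ and $\varphi_1\perp\varphi_2$; moreover $\norm{\varphi_2}<\varepsilon/2+m\cdot\varepsilon/(2m)=\varepsilon$. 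Finally $s(\varphi_1)$ is an orthogonal sum, over the finitely many summands involved, of finite tripotents, so $M_2(s(\varphi_1))$ is a finite $\ell_\infty$-sum of finite JBW$^*$-algebras and $s(\varphi_1)$ is finite. I expect the main obstacle to be the homogeneous infinite piece: proving the measurable Schmidt decomposition of Theorem~\ref{T:measurable Schmidt} and arranging its symmetry-adapted versions for Cartan factors of types $2$ and $3$ so that the finite-rank truncations stay inside $C$, together with the verification that the support tripotent of a normal functional on $A\overline{\otimes}C$ is the measurable field of the pointwise support tripotents --- this being exactly what turns pointwise orthogonality into algebraic orthogonality and the pointwise rank bound into finiteness of $s(\psi_1^{(n)})$.
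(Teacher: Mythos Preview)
Your plan is correct and follows the same overall strategy as the paper: reduce via structure theory to a finite summand (handled trivially) plus homogeneous infinite pieces $A\overline{\otimes}C$, then truncate a measurable Schmidt decomposition of the nuclear-operator density representing $\varphi$. Three implementation differences are worth noting.

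First, the paper's structural reduction (Proposition~\ref{P:type I decomposition}, via \cite[Theorem~7.1]{Finite}) is sharper than the raw Horn--Neher representation you invoke: it shows the properly infinite part of $M$ is triple-isomorphic to a sum of pieces $L^\infty(\mu)\overline{\otimes}C$ with $C$ either $B(H)$ or an infinite-dimensional type~2 factor $B(H)_a$, so only two infinite cases arise rather than your three; the type~3 case $S(H)$ never needs separate treatment.

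Second, for the antisymmetric case the paper does not build a measurable Youla-type SVD. It observes instead that, by uniqueness of the singular-value data, the standard Schmidt truncation of an antisymmetric nuclear operator is automatically antisymmetric, and likewise for the grouped partial isometries $\uu_n(x)$ of Theorem~\ref{T:measurable Schmidt}. This is simpler than constructing a symmetry-adapted measurable SVD, and the same uniqueness argument would dispatch your type~3 case verbatim.

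Third, for orthogonality and finiteness in $A\overline{\otimes}C$ the paper does not identify $s(\psi_1^{(n)})$ with the field of pointwise support tripotents. It only bounds $s(\psi_1^{(n)})\le T_{\vv^*}$ and $s(\psi_2^{(n)})\le\sum_{k>n}T_{\uu_k^*}$, where $T_{\f}$ is the operator acting fiberwise by the bounded measurable field $\f$; these bounding tripotents are visibly orthogonal, and $T_{\vv^*}$ is shown finite by quoting \cite[Proposition~5.31 and Lemma~5.16]{Finite}. This sidesteps the exact computation you correctly flag as the main obstacle.
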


The rest of this section is devoted to prove Proposition~\ref{P:type I approx}. To this end we will use the following decomposition result which was essentially established in \cite{Finite}. Let us note that the concrete definition of a type 2 Cartan factor can be found in the next subsection.  

\begin{prop}\label{P:type I decomposition}
Let $M$ be a JBW$^*$-algebra which is triple-isomorphic to a direct sum $M_1\oplus^{\ell_\infty}M_2$, where $M_1$ is a finite JBW$^*$-algebra and $M_2$ is a type I JBW$^*$-algebra.
Then $M$ is triple-isomorphic to a JBW$^*$-algebra of the form
$$N\oplus^{\ell_\infty}\left(\bigoplus_{j\in J}L^\infty(\mu_j)\overline{\otimes}C_j\right)\oplus^{\ell_\infty}\left(\bigoplus_{\lambda\in \Lambda}L^\infty(\nu_\lambda)\overline{\otimes}B(H_\lambda)\right),$$
where
\begin{itemize}
    \item $N$ is a finite JBW$^*$-algebra;
    \item $J$ and $\Lambda$ are (possibly empty) sets;
    \item $\mu_j$'s and $\nu_\lambda$'s are probability measures;
    \item $C_j$ is an infinite-dimensional type 2 Cartan factor for each $j\in J$;
    \item $H_\lambda$ is an infinite-dimensional Hilbert space
    for each $\lambda\in\Lambda$.
\end{itemize}
\end{prop}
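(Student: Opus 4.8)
The strategy is to reduce the general statement to the structural decomposition of type I JBW$^*$-triples recalled in the introduction (equation~\eqref{eq:representation of type I JBW* triples}) and the finite/properly infinite decomposition from \cite{Finite}. First I would observe that since $M = M_1 \oplus^{\ell_\infty} M_2$ with $M_1$ finite, the summand $M_1$ is already in the desired form (it contributes a piece of the finite part $N$), so we may concentrate on the type I JBW$^*$-algebra $M_2$. The key point is that $M_2$, being a type I JBW$^*$-algebra, is in particular a type I JBW$^*$-triple, hence by Horn's representation theorem it can be written as $\bigoplus_{j}^{\ell_\infty} A_j \overline{\otimes} C_j$ where each $A_j$ is an abelian von Neumann algebra and each $C_j$ is a Cartan factor. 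Since each $A_j$ is abelian it is (isometrically, hence triple-isomorphically) of the form $L^\infty(\mu_j)$ for a suitable \emph{localizable} measure; because the statement only needs $\mu_j$ to be a probability measure, one should here invoke the standard fact that an abelian von Neumann algebra decomposes as an $\ell_\infty$-sum of algebras of the form $L^\infty$ of a probability space (by localizability and a maximality/exhaustion argument splitting into $\sigma$-finite pieces), absorbing the extra index into the index set $J\cup\Lambda$.

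\textbf{Sorting the Cartan factors.} The next step is to split the sum $\bigoplus_j^{\ell_\infty} L^\infty(\mu_j)\overline{\otimes} C_j$ according to the type of the Cartan factor $C_j$ and its dimension. There are finitely many types of Cartan factors: the four ``classical'' infinite families (types 1--4, i.e.\ rectangular $B(H,K)$, symmetric and antisymmetric operators on $H$, and spin factors) and the two exceptional ones ($M_{1,2}(\mathbb{O})$ and $H_3(\mathbb{O})$, which are finite-dimensional). I would argue that if $C_j$ is \emph{finite-dimensional}, then $L^\infty(\mu_j)\overline{\otimes} C_j$ is a finite JBW$^*$-triple — indeed every tripotent in it, read fibrewise, lives in a finite-dimensional Cartan factor which is finite by \cite[Proposition 3.4]{Finite}, and finiteness passes to such $L^\infty$-modules — and moreover, crucially, the hypothesis that $M$ (hence $M_2$) is a JBW$^*$-\emph{algebra} forces the summand to be (triple-isomorphic to) a JBW$^*$-algebra, so it gets absorbed into the finite part $N$. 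Here one uses that a JBW$^*$-triple direct summand of a JBW$^*$-algebra which is finite need not itself be an algebra in general, but one can exploit the global algebra structure to see it is; alternatively, appeal directly to the structure theory: the only Cartan factors whose $L^\infty$-modules are JBW$^*$-algebras are type 1 with $H=K$, type 2, type 3, type 4, $H_3(\mathbb{O})$, and the type 2 Cartan factor of $2\times 2$ antisymmetric matrices; among these the algebra ones that are \emph{not} finite are exactly the type 1 with $H=K$ infinite-dimensional (giving $B(H_\lambda)$) and the type 2 infinite-dimensional ones (giving $C_j$ in the middle summand). The finite-dimensional ones and the finite-type-2 two-dimensional one go into $N$.

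\textbf{Isolating the infinite-dimensional type 1 and type 2 pieces.} After this sorting, the non-finite contributions are precisely $\bigoplus_{j\in J}^{\ell_\infty} L^\infty(\mu_j)\overline{\otimes} C_j$ with each $C_j$ an infinite-dimensional type 2 Cartan factor, and $\bigoplus_{\lambda\in\Lambda}^{\ell_\infty} L^\infty(\nu_\lambda)\overline{\otimes} C_\lambda$ with each $C_\lambda$ a type 1 Cartan factor $B(H_\lambda, K_\lambda)$. For the latter to be a JBW$^*$-algebra we need $H_\lambda \cong K_\lambda$, so $C_\lambda = B(H_\lambda)$; and to be non-finite we need $H_\lambda$ infinite-dimensional, as claimed. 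Type 3 and type 4 Cartan factors (when infinite-dimensional) are JBW$^*$-algebras but are they finite? Type 4 (spin factors) of infinite dimension: every complete tripotent is unitary there, so spin factors are finite, and their $L^\infty$-modules are finite, hence absorbed into $N$. Type 3 (antisymmetric operators, ``symplectic'') of infinite dimension: these are \emph{not} finite in general, so a reader might worry they should appear as a separate summand — here I would invoke the more detailed form of \cite[Theorem 7.1]{Finite}, which tells us that the properly infinite JBW$^*$-\emph{algebra} part $M_2$ in that decomposition, when we are inside the type I world, reduces exactly to $L^\infty$-modules over $B(H)$'s, i.e.\ type 3 Cartan factors do occur but only as \emph{part of a larger von Neumann algebra summand} rather than standalone; since our $M$ is a JBW$^*$-algebra and we are told $M_2$ is type I, the precise bookkeeping in \cite{Finite} yields exactly the three advertised pieces. \textbf{The main obstacle} I anticipate is precisely this bookkeeping: keeping rigorous track of which infinite-dimensional Cartan factors, once tensored with an abelian $L^\infty$, yield genuine JBW$^*$-algebras versus mere JBW$^*$-triples, and matching this against the canonical decomposition of \cite{Finite} so that nothing is double-counted and the ``type 2 / type $B(H)$'' split is exactly right. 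The cleanest route is probably to not re-derive the Cartan-factor classification but to cite \cite[Theorem 7.1]{Finite} in its detailed form, restrict it to the type I JBW$^*$-algebra setting (where $M_3 = M_4 = 0$ after the appropriate simplifications mentioned in the excerpt), unwind the type I structure of the finite part $M_1$ and of the properly infinite part $M_2$ via Horn--Neher, and then merely reindex and absorb all finite-dimensional and finite-type (spin, small symplectic) summands into $N$.
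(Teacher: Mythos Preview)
Your route through Horn's representation and a case-by-case sort of Cartan factors is far more elaborate than what the paper actually does, and the ``main obstacle'' you anticipate is real for your approach but entirely bypassed by the paper's. The paper's proof is three sentences: apply \cite[Theorem~7.1]{Finite} to $M$ to obtain $M\cong N\oplus^{\ell_\infty}N_1$ with $N$ a finite JBW$^*$-algebra and $N_1$ properly infinite (or trivial); apply the \emph{detailed} form of the same theorem to $N_1$ to get $N_1\cong\bigl(\bigoplus_{j\in J}L^\infty(\mu_j)\overline{\otimes}C_j\bigr)\oplus^{\ell_\infty}N_2$, where the $C_j$ are already guaranteed to be infinite-dimensional type~2 Cartan factors and $N_2$ is a properly infinite von Neumann algebra; finally, since $N_1$ (hence $N_2$) is a summand of the type~I algebra $M_2$ it is itself type~I, so \cite[Theorem~V.1.27]{Tak} gives $N_2\cong\bigoplus_{\lambda}L^\infty(\nu_\lambda)\overline{\otimes}B(H_\lambda)$. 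All the Cartan-factor bookkeeping you propose---which factors are finite, which yield algebras, what happens to type~3 and spin pieces---is already absorbed into the structure theorem from \cite{Finite}, so there is no need to reopen it.

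Two specific issues in your sketch. First, you have the labels reversed: in this paper's convention (Subsection~\ref{subsec:C2}) type~2 is \emph{antisymmetric} and type~3 is \emph{symmetric}. Second, your handling of the infinite-dimensional non-$B(H)$ pieces is where the genuine gap lies: you assert that type~3 summands ``occur only as part of a larger von Neumann algebra summand'' and that the properly infinite type~I part ``reduces exactly to $L^\infty$-modules over $B(H)$'s,'' but neither claim is justified in your argument, and the second is in fact wrong---the type~2 summand in the statement is not a von Neumann algebra. Resolving which infinite-dimensional Cartan factors survive as separate properly infinite JBW$^*$-algebra summands is precisely the content of \cite[Theorem~7.1]{Finite}; you would be redoing that work. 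Your closing remark that ``the cleanest route is probably to cite \cite[Theorem~7.1]{Finite} in its detailed form'' is exactly right and is what the paper does from the first line---so lead with it and drop the Horn--Neher detour entirely.
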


\begin{proof}
By \cite[Theorem 7.1]{Finite} $M$ is triple-isomorphic to $N\oplus^{\ell_\infty} N_1$, where $N$ is a finite JBW$^*$-algebra and $N_1$ is (either trivial or) a properly infinite JBW$^*$-algebra. By the same theorem $N_1$ is triple-isomorphic to 
$$\left(\bigoplus_{j\in J}L^\infty(\mu_j)\overline{\otimes}C_j\right)\oplus^{\ell_\infty}N_2,$$
where the first summand has the above-mentioned form and $N_2$ is (either trivial or) a properly infinite von Neumann algebra. Since by the assumptions $N_2$ is clearly of type I, we may conclude using \cite[Theorem V.1.27]{Tak}.
\end{proof}

We observe that the validity of Proposition~\ref{P:type I approx} is preserved by $\ell_\infty$-sums, so it is enough to prove it for the individual summands from Propostion~\ref{P:type I decomposition}. For the finite JBW$^*$-algebra $N$ we may use Observation~\ref{obs:finite JBW* algebras}.
We will prove the desired conclusion for the summands $L^\infty(\mu_j)\overline{\otimes}C_j$. For the remaining summands an easier version of the same proof works as we will explain below.

\subsection{The case of type 2 Cartan factors}\label{subsec:C2}

Let us start by recalling the definition of  type 2 Cartan factors. 
Let $H$ be a Hilbert space with a fixed orthonormal basis $(e_\gamma)_{\gamma\in \Gamma}$. Then $H$ is canonically represented as $\ell^2(\Gamma)$. For $\xi\in H$ let $\overline{\xi}$ be the coordinatewise complex conjugate of $\xi$. Further, for $x\in B(H)$ we denote by $x^t$ the operator defined by
$$x^t\xi=\overline{x^*\overline{\xi}},\qquad\xi\in H.$$
Then $x^t$ is the transpose of $x$ with respect to the fixed orthonormal basis, i.e., 
$$\ip{x^t e_\gamma}{e_\delta}=\ip{x e_\delta}{e_\gamma}\mbox{ for }\gamma,\delta\in\Gamma$$
(see, e.g., \cite[Section 5.3]{Finite} for the easy computation). Then
$$B(H)_s=\{x\in B(H)\setsep x^t=x\}\mbox{ and }B(H)_a=\{x\in B(H)\setsep x^t=-x\}$$
are the so-called \emph{Cartan factors} of \emph{type 3} and \emph{2}, respectively. They are formed by operators with symmetric (antisymmetric, respectively) `representing matrices' with respect to the fixed orthonormal basis. We will deal with the second case, i.e., with `antisymmetric operators'.

So, assume that $H$ has infinite dimension (or, equivalently, $\Gamma$ is an infinite set). Let $M=B(H)_a$.
Define $\pi:B(H)\to M$ by $\pi(x)=\frac12(x-x^t)$. Then $\pi$ is a norm-one projection which is moreover weak$^*$-to-weak$^*$ continuous.  Hence $\pi_*:M_*\to B(H)_*$ defined by $\pi_*(\varphi)=\varphi\circ\pi$ is an isometric injection. Moreover
$$\begin{aligned}\pi_*(M_*)&=\{\varphi\in B(H)_*\setsep \varphi(x^t)=-\varphi(x)\mbox{ for }x\in B(H)\}\\&=\{\varphi\in B(H)_*\setsep \varphi|_{B(H)_s}=0\}.\end{aligned}$$

Recall that $B(H)_*$ is isometric to the space of nuclear operators $N(H)$ via the trace duality (cf. \cite[Theorem II.1.8]{Tak}). Moreover, any $y\in N(H)$ is represented in the form
$$y=\sum_{k\geq 1} \lambda_k\ip{\cdot}{\eta_k}\xi_k$$
where $(\xi_k)$ and $(\eta_k)$ are orthonormal sequences in $H$ and the $\lambda_k$ are positive numbers with $\displaystyle\sum_{k\geq 1}\lambda_k=\norm{y}_N$.
Then clearly
$$y^*=\sum_{k\geq 1} \lambda_k\ip{\cdot}{\xi_k}\eta_k,$$
hence for any $\xi\in H$ we have
$$y^t\xi=\overline{y^*\overline{\xi}}
=\overline{\sum_{k\geq 1}  {\lambda_k} \ip{\overline{\xi}}{\xi_k}\eta_k}
=\sum_{k\geq 1}\lambda_k\ip{\xi}{\overline{\xi_k}}\overline{\eta_k},$$
thus
$$y^t=\sum_{k\geq 1} \lambda_k\ip{\cdot}{\overline{\xi_k}}\overline{\eta_k}.$$
In particular \begin{equation}\label{eq transpose preserves traces} \tr{y^t}=\sum_{k\geq 1} \lambda_k \ip{\overline{\eta_k}}{\overline{\xi_k}}=\sum_{k\geq 1} \lambda_k\ip{\xi_k}{\eta_k}=\tr{y}.
\end{equation}
Hence, given $\varphi\in B(H)_*$ represented by $y\in N(H)$, the functional $\varphi^t(x)=\varphi(x^t)$, $x\in B(H)$ is represented by $y^t$. Indeed,
$$\varphi^t(x)=\varphi(x^t)=\tr{x^ty}=\tr{y^tx}=\tr{xy^t}\mbox{ for }x\in B(H).$$
It follows that 
$$\pi_*M_*=\{\varphi\in B(H)_*\setsep \varphi\mbox{ is represented by an antisymmetric nuclear operator}\}.$$

\begin{proof}[Proof of Proposition~\ref{P:type I approx} for $M=B(H)_a$]
Fix $\varphi\in M_*$ of norm one and $\varepsilon>0$. Let $u=s(\varphi)\in M$.
Set $\tilde\varphi=\pi_*\varphi$. Fix $y\in N(H)$ representing $\tilde\varphi$. Then
$$y=\sum_{k\geq 1} \lambda_k\ip{\cdot}{\eta_k}\xi_k$$
where $(\xi_k)$ and $(\eta_k)$ are orthonormal sequences in $H$ and the $\lambda_k$ are strictly positive numbers with $\displaystyle \sum_{k\geq 1}\lambda_k=1$. Observe that
$$s(\tilde\varphi)=\sum_{k\geq 1}\ip{\cdot}{\xi_k}\eta_k.$$
Moreover, since $y$ is antisymmetric, we deduce that $s(\tilde\varphi)$ is also antisymmetric. Indeed, by the above
we have
$$y=-y^t=-\sum_{k\geq 1} \lambda_k\ip{\cdot}{\overline{\xi_k}}\overline{\eta_k}.$$
Hence
$$s(\tilde\varphi)=-\sum_{k\geq 1}\ip{\cdot}{\overline{\eta_k}}\overline{\xi_k}=-s(\tilde\varphi)^t.$$

For $\delta>0$ set
$$y_\delta=\sum_{\lambda_k\ge\delta} \lambda_k\ip{\cdot}{\eta_k}\xi_k.$$
Then $y_\delta$ is a finite rank operator and
$$y_\delta^t=\sum_{\lambda_k\ge\delta}\lambda_k\ip{\cdot}{\overline{\xi_k}}\overline{\eta_k}.$$
By uniqueness of the nuclear representation (the sequence $(\lambda_k)$ is unique and for any fixed $\lambda>0$ the linear spans of those $\eta_k$, resp. $\xi_k$, for which $\lambda_k=\lambda$
are uniquely determined) we deduce that $y_\delta$ is antisymmetric and hence its support tripotent
$$u_\delta=\sum_{\lambda_k\ge\delta}\ip{\cdot}{\xi_k}\eta_k$$
is antisymmetric as well.

Fix $\delta>0$ such that $\sum_{\lambda_k<\delta}\lambda_k<\varepsilon$.
Then $\norm{y-y_\delta}_N<\varepsilon$. 

Let $\tilde\varphi_1$ be the functional represented by $y_\delta$ and $\tilde\varphi_2=\tilde\varphi-\tilde\varphi_1$ (i.e., the functional represented by $y-y_\delta$). Since $y_\delta$ is antisymmetric, both $\tilde\varphi_1$ and $\tilde\varphi_2$ belong to $\pi_*M_*$. Moreover, $s(\tilde\varphi_1)=u_\delta$  and $s(\tilde\varphi_2)=u-u_\delta$.
Since $u_\delta\perp u-u_\delta$, we deduce that $\tilde{\varphi}_1\perp\tilde{\varphi}_2$. Further, $u_\delta$ is a finite tripotent, being a finite rank partial isometry.

Since we are in $\pi_*M_*$, we have functionals $\varphi_1,\varphi_2\in M_*$ such that $\tilde{\varphi}_j=\pi_*\varphi_j$.
It is now clear that they provide the sought decomposition of $\varphi$.
\end{proof}

We have settled the case of $B(H)_a$. Note that for $M=B(H)$ the same proof works -- we just do not use the mapping $\pi$ and are not obliged to check the antisymmetry. The proof was done using the Schmidt decomposition of nuclear operators. To prove the result for the tensor product we will use a measurable version of Schmidt decomposition established in the following subsection.

\subsection{Measurable version of Schmidt decomposition}

In this subsection we are going to prove the following result (note that $K(H)$ denotes the C$^*$-algebra of compact operators on $H$).

\begin{thm}\label{T:measurable Schmidt} Let $H$ be a 
Hilbert space. Then there are sequences $(\lambda_n)_{n=0}^\infty$ and $(\uu_n)_{n=0}^\infty$ of mappings such that the following properties are fulfilled for $n\in\en$ and $x\in K(H)$:
\begin{enumerate}[$(a)$]
    \item $\lambda_n:K(H)\to[0,\infty)$ is a lower-semicontinuous mapping;
    \item $\lambda_{n+1}(x)<\lambda_n(x)$ whenever $x\in K(H)$ and $\lambda_n(x)>0$;
    \item $\uu_n:K(H)\to K(H)$ is a Borel measurable mapping;
    \item $\uu_n(x)$ is a finite rank partial isometry on $H$;
    \item $\uu_n(x)=0$ whenever $\lambda_n(x)=0$;
    \item The partial isometries $\uu_k(x)$, $k\in\en\cup\{0\}$, are pairwise orthogonal;
    \item
    $x=\sum_{n=0}^\infty\lambda_n(x)\uu_n(x),$ where the series converges in the operator norm.
\end{enumerate}
\end{thm}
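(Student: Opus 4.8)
The goal is to upgrade the classical Schmidt decomposition of a compact operator $x\in K(H)$ — namely $x=\sum_k \lambda_k\, \ip{\cdot}{\eta_k}\xi_k$ with $\lambda_1\ge\lambda_2\ge\cdots\to 0$ and orthonormal systems $(\xi_k),(\eta_k)$ — to a decomposition that is \emph{canonical}, grouping equal singular values, and for which the coefficients depend lower-semicontinuously and the partial isometries Borel-measurably on $x$. First I would reindex: let $\lambda_0(x)>\lambda_1(x)>\cdots$ be the \emph{distinct} nonzero singular values of $x$ listed in strictly decreasing order (extended by $0$ once they run out), so that property $(b)$ holds by construction. The natural candidate for $\uu_n(x)$ is the partial isometry with initial space the eigenspace of $\abs{x}=(x^*x)^{1/2}$ for the eigenvalue $\lambda_n(x)$ and final space its image under the polar part; concretely, if $P_n(x)$ is the spectral projection of $\abs{x}$ corresponding to $\{\lambda_n(x)\}$ (a finite-rank projection when $\lambda_n(x)>0$), then $\uu_n(x)=\lambda_n(x)^{-1}\, x\,P_n(x)$ when $\lambda_n(x)>0$ and $\uu_n(x)=0$ otherwise. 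Properties $(d)$, $(e)$, $(f)$, and $(g)$ are then essentially the spectral theorem: $(f)$ follows since the ranges of the $P_n(x)$ are mutually orthogonal eigenspaces (and likewise the final spaces), and $(g)$ is just $x=\sum_n \lambda_n(x)\uu_n(x)=x\sum_n P_n(x)$ with norm convergence because $\lambda_n(x)\to 0$.

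The substantive part is the regularity, i.e. properties $(a)$ and $(c)$. For $(a)$, I would express $\lambda_n(x)$ via a min–max (Courant–Fischer / Ky Fan) type formula in terms of the singular values $s_1(x)\ge s_2(x)\ge\cdots$ counted with multiplicity: each $s_m(x)=\min_{\dim L\le m-1}\norm{x|_{L^\perp}}$ (or the approximation-number description $s_m(x)=\dist(x,\{\text{rank}<m\})$) depends \emph{continuously} on $x$ in operator norm, in fact $1$-Lipschitz. The distinct value $\lambda_n(x)$ is obtained from the sequence $(s_m(x))_m$ by a "count the jumps" procedure; writing $\lambda_n(x)$ as a supremum of expressions of the form $\max\{0, s_{m}(x)-\text{(something continuous)}\}$ over the finitely many ways the first $n$ jumps can occur, one gets lower semicontinuity (a sup of continuous functions). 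I would isolate this as a small lemma: \emph{the map $x\mapsto(\lambda_n(x))_n$ built from the nonincreasing sequence of approximation numbers is lower semicontinuous coordinatewise}.

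For the measurability in $(c)$, the point is that although $\abs{x}\mapsto P_n(x)$ is \emph{not} norm-continuous (spectral projections jump when eigenvalues merge or split), it \emph{is} Borel measurable. I would argue as follows: on the Borel set $\{x: \lambda_n(x)>0\}$, pick $0<\alpha_n(x)<\beta_n(x)$ separating $\lambda_n(x)$ from the rest of the spectrum of $\abs{x}$ — these can be chosen Borel-measurably in $x$ from the lower-semicontinuous data $(\lambda_k(x))_k$, e.g. $\alpha_n=\tfrac12(\lambda_n+\lambda_{n+1})$ (with the convention $\lambda_{n+1}=0$ if absent) and $\beta_n=\tfrac12(\lambda_{n-1}+\lambda_n)$ (with $\beta_0=\lambda_0+1$, say). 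Then $P_n(x)=\chi_{[\alpha_n(x),\beta_n(x)]}(\abs{x})=\frac{1}{2\pi i}\oint_{\gamma} (\zeta - \abs{x})^{-1}\,d\zeta$ over a contour $\gamma$ enclosing exactly $[\alpha_n(x),\beta_n(x)]\cap \mathrm{Sp}(\abs{x})$; the resolvent $(\zeta,x)\mapsto(\zeta-\abs{x})^{-1}$ is continuous where defined, and $x\mapsto\abs{x}=(x^*x)^{1/2}$ is continuous, so $P_n$ is a Borel function of $x$ (a pointwise construction out of continuous resolvents and Borel contour data). Finally $\uu_n(x)=\lambda_n(x)^{-1}x P_n(x)$ on $\{\lambda_n(x)>0\}$ and $0$ elsewhere is Borel, being a product of Borel maps, and it is a finite-rank partial isometry because $P_n(x)$ is a finite-rank projection and $x$ restricted to $\mathrm{ran}\,P_n(x)$ is $\lambda_n(x)$ times an isometry.

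\textbf{Main obstacle.} The delicate step is the Borel measurability of the spectral projections $x\mapsto P_n(x)$ and the accompanying measurable choice of separating levels $\alpha_n(x),\beta_n(x)$: one must check that the "gap" between $\lambda_n(x)$ and the neighbouring distinct singular values can be controlled measurably even as eigenvalues collide (where the gap degenerates and the projections jump), and that the contour-integral / functional-calculus formula for $P_n(x)$ is literally a Borel function of $x$ rather than merely continuous off a small exceptional set. Once this measurable spectral bookkeeping is set up, properties $(a)$–$(g)$ follow from the spectral theorem and the Lipschitz continuity of approximation numbers with only routine verification.
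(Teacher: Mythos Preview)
Your proposal is correct and follows essentially the same route as the paper: define $\lambda_n(x)$ as the distinct singular values, set $\uu_n(x)=\lambda_n(x)^{-1}xP_n(\abs{x})$ with $P_n$ the spectral projection of $\abs{x}$, get lower semicontinuity of $\lambda_n$ from the $1$-Lipschitz continuity of the approximation numbers $s_m$, and identify the Borel measurability of $x\mapsto P_n(\abs{x})$ as the only nontrivial step. The one technical difference is in that last step: rather than a Riesz contour integral with $x$-dependent separating levels $\alpha_n(x),\beta_n(x)$ (which works but requires checking joint measurability in the contour parameters), the paper applies the Borel affine rescaling $\psi(x)=\frac{x-\lambda_0(x)I}{2(\lambda_0(x)-\lambda_1(x))}+I$ to push $\sigma(\psi(x))$ into $\{1\}\cup(-\infty,\tfrac12]$, so that $P_0(x)=f(\psi(x))$ for a single \emph{fixed} continuous $f$ and the continuity of functional calculus $y\mapsto f(y)$ finishes the argument immediately; the higher $P_n$ then follow by induction via $P_n(x)=P_0\bigl(x-\sum_{k<n}\lambda_k(x)P_k(x)\bigr)$.
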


Let us point out that the Borel measurability in this theorem and in the lemmata used in the proof is considered with respect to the norm topology. However, if $X$ is a separable Banach space, it is well known and easy to see that any norm open set is weakly $F_\sigma$, hence the norm Borel sets coincide with the weak Borel sets (cf. \cite[pages 74 and 75]{Kuo75}). This applies in particular to $H$,  $K(H)$ and  $K(H)\times H$ where $H$ is a separable Hilbert space.

The proof will be done in several steps contained in the following lemmata.

\begin{lemma}\label{L:measurability of singular numbers}
Let $H$ be a Hilbert space (not necessarily separable). For $x\in K(H)$ let $(\alpha_n(x))$ be the sequence of its singular numbers.
Moreover, let $(\lambda_n(x))$ be the strictly decreasing version of $(\alpha_n(x))$ (recall that the sequence $(\alpha_n(x))$ itself is non-increasing), completed by zeros if necessary. I.e.,
$$\lambda_n(x)=\begin{cases}
\alpha_k(x) & \mbox{ if }\card \{\alpha_0(x),\alpha_1(x),\dots,\alpha_k(x)\}=n+1\\
 0 & \mbox{ if such $k$ does not exist.}
\end{cases}$$
Then the following assertions are valid for each $n\in\en\cup\{0\}$.
\begin{enumerate}[$(i)$]
    \item $\alpha_n$ is a $1$-Lipschitz function on $K(H)$;
    \item $\lambda_n$ is a lower semicontinuous function on $K(H)$, in particular it is Borel measurable and of the first Baire class.
\end{enumerate}
\end{lemma}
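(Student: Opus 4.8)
The plan is to establish the two assertions by reducing everything to the classical theory of singular numbers of compact operators, combined with elementary stability estimates.

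\textbf{Assertion $(i)$.} First I would recall the min-max (Fischer--Courant) characterisation of the singular numbers,
$$\alpha_n(x)=\min\{\norm{x|_{V^\perp}}\setsep V\subseteq H\mbox{ a closed subspace with }\dim V\le n\},$$
equivalently $\alpha_n(x)=\inf\{\norm{x-F}\setsep F\in B(H),\ \operatorname{rank}F\le n\}$. From the latter it is immediate that $\alpha_n$ is $1$-Lipschitz: if $x,y\in K(H)$ and $F$ has rank at most $n$ with $\norm{x-F}\le\alpha_n(x)+\eta$, then $\norm{y-F}\le\norm{x-F}+\norm{x-y}$, so $\alpha_n(y)\le\alpha_n(x)+\norm{x-y}+\eta$; letting $\eta\to0$ and swapping $x,y$ gives $|\alpha_n(x)-\alpha_n(y)|\le\norm{x-y}$. (Here I should note that $\alpha_n$ makes sense even for non-separable $H$ because a compact operator has separable range, so all the classical statements apply verbatim on the separable reducing subspace; I would mention this explicitly.)

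\textbf{Assertion $(ii)$.} The function $\lambda_n$ is \emph{not} continuous in general, since the number of distinct singular values can jump, but it is lower semicontinuous. The cleanest route is to observe that for a fixed threshold $t>0$ the rank of the spectral projection of $|x|$ onto $(t,\infty)$, i.e. $r_t(x):=\card\{k\setsep\alpha_k(x)>t\}$, is lower semicontinuous in $x$: indeed if $r_t(x)\ge m$ then $\alpha_{m-1}(x)>t$, and by the Lipschitz property of $\alpha_{m-1}$ the same strict inequality persists on a neighbourhood of $x$. Now $\lambda_n(x)>t$ holds if and only if there are at least $n+1$ \emph{distinct} values among $\alpha_0(x),\alpha_1(x),\dots$ that exceed $t$; and the number of distinct values of the sequence $(\alpha_k(x))_k$ lying in $(t,\infty)$ equals the number of distinct jump levels, which can be read off as the cardinality of the set of rationals $q>t$ at which $r_q(x)$ jumps — more concretely, $\lambda_n(x)>t$ iff there exist rationals $t<q_0<q_1<\dots<q_n$ with $r_{q_i}(x)>r_{q_{i+1}}(x)$ for all $i<n$ and $r_{q_n}(x)\ge 1$, a condition which, by lower semicontinuity of each $r_q$ together with the fact that $r_q$ is integer-valued and non-increasing in $q$, defines an open set in $x$. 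Hence $\{\lambda_n>t\}$ is open for every $t$, i.e. $\lambda_n$ is lower semicontinuous; consequently it is Borel and, being a pointwise supremum of the continuous truncations $\min(\lambda_n,\cdot)$ in a standard way, of the first Baire class.

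\textbf{Main obstacle.} The only delicate point is the bookkeeping in $(ii)$: translating ``the $n$-th distinct value of a monotone sequence'' into an open condition. The Lipschitz estimate of part $(i)$ controls each individual $\alpha_k$ but says nothing directly about how the \emph{multiplicities} behave, and naively one might fear that $\lambda_n$ is only Borel of some higher class. The resolution is exactly the observation above — that multiplicities enter only through the integer-valued, monotone, lower-semicontinuous counting functions $r_t$ — and I expect that to be where the real content of the lemma lies; once that is set up, lower semicontinuity of $\lambda_n$ drops out, and the remaining claims (Borel, first Baire class) are then automatic from standard facts about lower semicontinuous functions on a metric space.
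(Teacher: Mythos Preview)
Your argument for $(i)$ is exactly the paper's: both use the approximation formula $\alpha_n(x)=\dist(x,\{\operatorname{rank}\le n\})$ and read off the $1$-Lipschitz bound.

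For $(ii)$ your strategy is also the paper's --- show that $\{\lambda_n>c\}$ is open by exhibiting witnesses separating $n+1$ distinct singular values --- but your justification has a loose joint. For \emph{fixed} rationals $q_0<\dots<q_n$ the set $\{x: r_{q_i}(x)>r_{q_{i+1}}(x)\}$ need not be open: if the only singular value in $(q_i,q_{i+1}]$ sits exactly at $q_{i+1}$, a small perturbation can push it above $q_{i+1}$ and kill the strict inequality. Lower semicontinuity of each $r_q$ does not control a \emph{difference} $r_{q_i}-r_{q_{i+1}}$. The openness of the \emph{union} over rational tuples is nonetheless true, but to see it you end up using continuity of the individual $\alpha_k$ anyway (choose new rationals strictly between the distinct singular values and invoke part $(i)$). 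The paper short-circuits this: it writes
\[
\lambda_n(x)>c \iff \exists\, c_0>c_1>\dots>c_n>c,\ \exists\, k_0,\dots,k_n\in\en:\ \alpha_{k_j}(x)\in(c_{j+1},c_j)\ \forall j,
\]
which is a countable union of sets that are \emph{manifestly} open by continuity of each $\alpha_{k_j}$. This is the same idea as yours, just with the counting-function layer removed so that no boundary issue arises.

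One further wrinkle: your closing line that $\lambda_n$ is Baire-1 ``being a pointwise supremum of the continuous truncations $\min(\lambda_n,\cdot)$'' does not parse --- $\min(\lambda_n,c)$ is no more continuous than $\lambda_n$. The paper simply cites the standard fact that a lower semicontinuous function on a metric space is $F_\sigma$-measurable, hence Borel and of the first Baire class; you should do the same (or use the inf-convolution approximants $x\mapsto\inf_y(\lambda_n(y)+k\,d(x,y))$).
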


\begin{proof}
$(i)$ This is proved in \cite[Corollary VI.1.6]{Gohberg90} and 
easily follows from the following well-known formula for singular numbers
$$\alpha_n(x)=\dist \Big(x, \Big\{y\in K(H)\setsep \dim yH\le n\Big\}\Big), \quad x\in K(H), n\in\en\cup\{0\}$$ (cf. \cite[Theorem VI.1.5]{Gohberg90}).

$(ii)$ Clearly $\lambda_n\ge0$. Moreover, for each $c>0$ we have
$\lambda_n(x)>c$ if and only if
$$\exists\, c_0>c_1>\dots>c_n>c_{n+1}=c,\, \exists\, k_0,k_1,\dots k_n\in \en\, \hbox{ such that }$$ 
$$ \alpha_{k_j} (x)\in (c_{j+1},c_j)\ \ \forall j\in\{0,\dots,n\}.$$

Since the functions $\alpha_k$ are continuous by $(i)$, $\{x\setsep \lambda_n(x)>c\}$ is open. Now the lower semicontinuity easily follows.

Finally, any lower semicontinuous function on a metric space is clearly $F_\sigma$-measurable, hence Borel measurable and also of the first Baire class (cf. \cite[Corollary 3.8(a)]{LMZ}).
\end{proof}

\begin{lemma}\label{L:measurable projections}
Let $H$ be a 
Hilbert space. For any $x\in K(H)_+$ and $n\in\en\cup\{0\}$ let $p_n(x)$ be the projection onto the eigenspace with respect to the eigenvalue $\lambda_n(x)$ provided $\lambda_n(x)>0$
and $p_n(x)=0$ otherwise. Then the mapping $p_n$ is Borel measurable.
\end{lemma}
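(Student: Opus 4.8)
The statement asserts Borel measurability of the spectral-projection map $x\mapsto p_n(x)$ on $K(H)_+$, where $p_n(x)$ projects onto the eigenspace of the eigenvalue $\lambda_n(x)$. The natural route is via the holomorphic/continuous functional calculus applied along a contour that isolates the eigenvalue $\lambda_n(x)$. Concretely, for a positive compact operator $x$ with $\lambda_n(x)>0$, the spectrum near $\lambda_n(x)$ is isolated (by Lemma~\ref{L:measurability of singular numbers}$(ii)$, the distinct nonzero eigenvalues are strictly separated), so one has the Riesz projection
$$p_n(x)=\frac{1}{2\pi i}\oint_{\Gamma_n(x)}(\zeta-x)^{-1}\di\zeta,$$
where $\Gamma_n(x)$ is a small circle around $\lambda_n(x)$ avoiding the rest of $\operatorname{Sp}(x)$. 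The plan is to make the radius of this circle depend measurably on $x$ and then check that the resulting operator-valued integral depends measurably on $x$.

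\textbf{Key steps.} First I would fix the separable reduction: since for a given $x\in K(H)$ all the relevant data ($x$ itself, its eigenvectors, the projections $p_k(x)$) live in a single separable subspace, and since by the remark following Theorem~\ref{T:measurable Schmidt} norm-Borel and weak-Borel sets agree on separable Hilbert spaces, it suffices to prove the measurability statement assuming $H$ separable. (A clean way to phrase this: fix any countable subset and argue the map is Borel as a map into $K(H)$ with the norm topology; alternatively one restricts attention to the separable subalgebra generated by a single operator.) Second, I would define, for $x$ with $\lambda_n(x)>0$, the separation radius
$$r_n(x)=\tfrac12\min\{\lambda_{n-1}(x)-\lambda_n(x),\ \lambda_n(x)-\lambda_{n+1}(x)\}$$
(with the convention $\lambda_{-1}(x)=+\infty$, and noting $\lambda_{n+1}(x)<\lambda_n(x)$ strictly whenever $\lambda_n(x)>0$). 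By Lemma~\ref{L:measurability of singular numbers}$(ii)$ each $\lambda_k$ is Borel (indeed of the first Baire class), hence $r_n$ is Borel on the Borel set $\{x\in K(H)_+:\lambda_n(x)>0\}$, and $r_n>0$ there. Third, on this set the circle $\Gamma_n(x)=\{\zeta:\betr{\zeta-\lambda_n(x)}=r_n(x)\}$ lies in the resolvent set, and $p_n(x)$ is given by the contour integral above; for $x$ with $\lambda_n(x)=0$ we set $p_n(x)=0$ by definition. Fourth — the measurability of the integral — I would parametrize $\zeta=\lambda_n(x)+r_n(x)e^{i\theta}$ and write
$$p_n(x)=\frac{1}{2\pi}\int_0^{2\pi}r_n(x)e^{i\theta}\bigl((\lambda_n(x)+r_n(x)e^{i\theta})I-x\bigr)^{-1}\di\theta.$$
The map $(x,\zeta)\mapsto(\zeta I-x)^{-1}$ is norm-continuous on the open set $\{(x,\zeta):\zeta\notin\operatorname{Sp}(x)\}$ (standard: the resolvent is continuous in both variables jointly on its domain), so the integrand is jointly Borel in $(x,\theta)$; then measurability of $x\mapsto p_n(x)$ follows from a routine Fubini/approximation-by-Riemann-sums argument — the Riemann sums are Borel in $x$ (finite sums of compositions of Borel maps $x\mapsto\lambda_n(x)$, $x\mapsto r_n(x)$ with the continuous resolvent), and they converge in norm to $p_n(x)$ uniformly enough (the integrand is bounded on compact $\theta$-sets for fixed $x$, and one can dominate locally), so a pointwise norm-limit of Borel maps is Borel.

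\textbf{Main obstacle.} The delicate point is not the functional-calculus formula but making the contour depend measurably on $x$ while staying in the resolvent set, and then transferring measurability through the operator-valued integral. The subtlety is that $r_n(x)$ is only Borel (first Baire class), not continuous, so the integrand $(x,\theta)\mapsto r_n(x)e^{i\theta}((\lambda_n(x)+r_n(x)e^{i\theta})I-x)^{-1}$ is a composition of a jointly continuous map (the resolvent on its domain) with the Borel maps $\lambda_n,r_n$; one must verify this composition lands inside the domain of the resolvent (which it does, by the choice of $r_n$ forcing $\Gamma_n(x)\subset\rho(x)$) and is therefore Borel. After that, the passage from "integrand Borel in $x$ for each $\theta$, continuous in $\theta$ for each $x$" to "$x\mapsto\int$ Borel" is the standard measurability-of-parametrized-integrals lemma and should be cited or dispatched with a one-line Riemann-sum argument. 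I expect the authors to instead give an alternative, more hands-on argument identifying $p_n(x)$ as a norm-limit of polynomials in $x$ (spectral calculus: $p_n(x)=\lim_k q_k(x)$ for suitable polynomials $q_k$ depending Borel-measurably on the eigenvalue data), which also yields Borel measurability as a pointwise norm-limit of Borel maps; both approaches are viable and the contour-integral one is the cleanest to write.
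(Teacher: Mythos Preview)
Your approach via the Riesz projection is correct but differs from the paper's. The paper treats $p_0$ first by an affine renormalization: set
$\psi(x)=\frac{x-\lambda_0(x)I}{2(\lambda_0(x)-\lambda_1(x))}+I$,
which sends the top eigenvalue to $1$ and the rest of the spectrum into $(-\infty,\tfrac12]$; then $p_0(x)=f(\psi(x))$ for any continuous $f$ with $f(1)=1$ and $f\equiv 0$ on $(-\infty,\tfrac12]$, and Borel measurability follows because $\psi$ is Borel (by Lemma~\ref{L:measurability of singular numbers}) and the continuous functional calculus $y\mapsto f(y)$ is norm-continuous on $B(H)_{sa}$. For $n\ge1$ the paper inducts via $p_n(x)=p_0\bigl(x-\sum_{k<n}\lambda_k(x)p_k(x)\bigr)$. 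Your contour-integral route is equally valid and has the advantage of handling all $n$ uniformly without induction; the paper's route is more elementary in that it avoids operator-valued integrals and the parametrized-integral measurability check, relying only on the norm-continuity of the continuous functional calculus. One small remark: your separable-reduction step is unnecessary --- the Riemann-sum argument already gives $p_n$ as a pointwise norm-limit of Borel maps into the metric space $B(H)$, which suffices regardless of separability.
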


\begin{proof}
We start by proving that the mapping $p_0$ is Borel measurable. For $x\in K(H)_+\setminus\{0\}$ we set
$$\psi(x)=\frac{x-\lambda_0(x)\cdot I}{2(\lambda_0(x)-\lambda_1(x))}+I.$$
Then the mapping $\psi:K(H)_+\setminus\{0\}\to B(H)_{sa}$ is Borel measurable (by Lemma~\ref{L:measurability of singular numbers}$(ii)$, note that for $x\in K(H)_+\setminus\{0\}$ we have $\lambda_0(x)>\lambda_1(x)$).

Moreover, since
$$x=\sum_{n\ge0} \lambda_n(x)p_n(x),$$
by the Hilbert-Schmidt theorem, we deduce that
$$\psi(x)=p_0(x)+\sum_{n\ge1} \frac{\lambda_0(x)-2\lambda_1(x)+\lambda_n(x)}{2(\lambda_0(x)-\lambda_1(x))}p_n(x) +\frac{\lambda_0(x)-2\lambda_1(x)}{2(\lambda_0(x)-\lambda_1(x))}(I-\sum_{n\ge0}p_n(x)),$$
hence
the spectrum of $\psi(x)$ is
$$\sigma(\psi(x))=\left\{1,\tfrac{\lambda_0(x)-2\lambda_1(x)}{2(\lambda_0(x)-\lambda_1(x))}\right\}\cup\left\{\tfrac{\lambda_0(x)-2\lambda_1(x)+\lambda_n(x)}{2(\lambda_0(x)-\lambda_1(x))}\setsep n\ge 1\right\}\subset\{1\}\cup(-\infty,\tfrac12].$$
It follows that $p_0(x)=f(\psi(x))$ whenever $f$ is a continuous function on $\er$ with $f=0$ on $(-\infty,\frac12]$ and $f(1)=1$. 

Since the mapping $y\mapsto f(y)$ is continuous on $B(H)_{sa}$ by \cite[Proposition I.4.10]{Tak}, we deduce that $p_0$ is a Borel measurable mapping.

Further, for $n\in\en$ we have
$$p_n(x)=\begin{cases}0,& \hbox{if } \lambda_n(x)=0,\\
p_0\left(x-\sum_{k=0}^{n-1}\lambda_k(x)p_k(x)\right),& \hbox{if } \lambda_n(x)>0,\end{cases}$$
hence by the obvious induction we see that $p_n$ is Borel measurable as well.
\end{proof}

\begin{proof}[Proof of Theorem~\ref{T:measurable Schmidt}]
Fix any $x\in K(H)$. Let $x=u(x)\abs{x}$ be the polar decomposition.
By the Hilbert-Schmidt theorem we have
$$\abs{x}=\sum_n \lambda_n(x)  \, p_n(\abs{x})$$
(note that $\lambda_n(x)=\lambda_n(\abs{x})$). Hence 
$$x=\sum_n \lambda_n(x) u(x) p_n(\abs{x})=\sum_n \lambda_n(x) u_n(x),$$
where $u_n(x)=u(x)p_n(\abs{x})$ are mutually orthogonal partial isometries (of finite rank). The mappings $\lambda_n$ are lower semicontinuous by Lemma~\ref{L:measurability of singular numbers}. 

Further, the assignment $x\mapsto\abs{x}=\sqrt{x^*x}$ is continuous by the properties of the functional calculus.  Indeed, the mapping $x\mapsto x^*x$ is obviously continuous and the mapping $y\mapsto\sqrt{y}$ is continuous on the positive cone of $K(H)$ by \cite[Proposition I.4.10]{Tak}.

Hence, we can deduce from Lemma~\ref{L:measurable projections} that the assignments $x\mapsto p_n(\abs{x})$ are Borel measurable.
Since $u_n(x)=0$ whenever $\lambda_n(x)=0$ and $u_n(x)=\frac1{\lambda_n(x)}xp_n(\abs{x})$ if $\lambda_n(x)>0$, it easily follows that the mapping $u_n$ is Borel measurable.
\end{proof}

\begin{prop}\label{P:measurable nuclear rep}
Let $H$ be a separable Hilbert space. Consider the mappings $\lambda_n$ and $u_n$ provided by Theorem~\ref{T:measurable Schmidt}
restricted to $N(H)$. Then $\lambda_n$ and $u_n$ are Borel measurable also with respect to the nuclear norm. Moreover, the series from assertion $(g)$ converges absolutely in the nuclear norm and, moreover,
$$\norm{x}=\sum_{n=0}\lambda_n\norm{u_n(x)}$$
where the norm is the nuclear one.
\end{prop}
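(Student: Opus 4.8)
The plan is to transfer the Borel measurability and the convergence statements from the operator norm (where they are already established in Theorem~\ref{T:measurable Schmidt}) to the nuclear norm, using that $N(H)$ with the nuclear norm is a separable Banach space continuously embedded into $K(H)$. The key point is that the singular numbers $\lambda_n(x)$ and the spectral projections $p_n(\abs x)$ are recovered from $x$ by operations that are in fact continuous, not merely Borel, once one passes to the nuclear norm; and that the absolute convergence of the Schmidt series in the nuclear norm is just the definition of the nuclear norm applied to the representation furnished by Theorem~\ref{T:measurable Schmidt}.

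\textbf{First steps.} First I would record that the inclusion $\iota\colon N(H)\hookrightarrow K(H)$ is norm-decreasing (indeed $\norm{\cdot}\le\norm{\cdot}_N$), hence continuous, so every mapping on $K(H)$ that is Borel with respect to the operator norm is, after restriction to $N(H)$, Borel with respect to the operator norm topology on $N(H)$; but this is \emph{weaker} than what we want. To get Borel measurability for the nuclear norm I would argue instead that on $N(H)$ the relevant assignments are genuinely nuclear-norm continuous. Concretely: for $x\in N(H)$ the map $x\mapsto\abs x=\sqrt{x^*x}$ is nuclear-norm continuous (the map $x\mapsto x^*x$ from $N(H)$ into the trace class is continuous in nuclear norm, and $y\mapsto\sqrt y$ is operator-monotone hence behaves well on positive trace-class operators — alternatively one uses that the singular numbers depend $1$-Lipschitzly on $x$ in operator norm and $\ell^1$-summably in nuclear norm). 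More robustly, the $n$th singular number $\lambda_n$, being a $1$-Lipschitz function of $x$ in operator norm (Lemma~\ref{L:measurability of singular numbers}$(i)$) and hence a fortiori nuclear-norm continuous, stays lower semicontinuous and Borel on $N(H)$; and the projection-valued maps $p_n(\abs{\cdot})$ of Lemma~\ref{L:measurable projections}, whose proof only used continuity of $y\mapsto f(y)$ in operator norm together with Borel measurability of the $\lambda_k$, go through verbatim with ``Borel in operator norm'' replaced by ``Borel in nuclear norm'' as soon as one knows $x\mapsto\abs x$ is nuclear-norm continuous and the $\lambda_k$ are nuclear-norm Borel. Then $u_n(x)=\frac1{\lambda_n(x)}xp_n(\abs x)$ on $\{\lambda_n>0\}$ and $u_n(x)=0$ elsewhere is nuclear-norm Borel by the same elementary argument as before.

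\textbf{Convergence in nuclear norm.} For the last assertion, recall that for $x\in N(H)$ the nuclear norm equals $\sum_{n\ge0}\alpha_n(x)$, the sum of the singular numbers; grouping equal singular numbers gives $\norm x=\sum_{n\ge0}\lambda_n(x)\operatorname{rank}p_n(\abs x)$. On the other hand $u_n(x)=u(x)p_n(\abs x)$ is a finite-rank partial isometry with $\norm{u_n(x)}=\operatorname{rank}p_n(\abs x)$ (the nuclear norm of a rank-$r$ partial isometry is $r$). Since the $u_n(x)$ are pairwise orthogonal partial isometries, the partial sums $\sum_{n\le N}\lambda_n(x)u_n(x)$ have nuclear norm exactly $\sum_{n\le N}\lambda_n(x)\norm{u_n(x)}$, and letting $N\to\infty$ shows the series converges absolutely in the nuclear norm with $\norm x=\sum_{n\ge0}\lambda_n(x)\norm{u_n(x)}$, as claimed.

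\textbf{Main obstacle.} The only genuinely delicate point is the nuclear-norm continuity of $x\mapsto\abs x$ (equivalently, of $x\mapsto x^*x$ followed by the square root) on $N(H)$; one must be a little careful because the square-root map, while operator-norm continuous on positive operators, is not Lipschitz, and one is now asking for continuity in the stronger nuclear topology. The clean way around it is to avoid the square root altogether and instead build $p_n(\abs x)$ directly from the operator-norm-continuous data of Theorem~\ref{T:measurable Schmidt} (which already gives Borel maps in operator norm on all of $K(H)$), then observe that the \emph{restriction} of a norm-Borel map to the nuclear-norm-Borel subset $N(H)$ is automatically nuclear-norm Borel because $N(H)$ is separable and the identity $N(H)\to K(H)$ is continuous — so norm-open sets pull back to nuclear-open (hence nuclear-Borel) sets, and since on a separable metrizable space Borel$=\sigma$(open), Borel measurability transfers. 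Thus in fact no new continuity is needed for the measurability part: one simply notes $\iota$ is continuous and $N(H)$ separable, so $\iota$ is Borel measurable and composition preserves Borel measurability; the substantive content is entirely in the nuclear-norm convergence computation of the previous paragraph.
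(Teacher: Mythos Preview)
Your proposal is correct and ultimately takes the same approach as the paper: Borel measurability transfers along the continuous inclusion $N(H)\hookrightarrow K(H)$ (exactly as you conclude in your final paragraph, making the detour through nuclear-norm continuity of $x\mapsto|x|$ in ``First steps'' unnecessary), and the convergence statement is the Schmidt representation of nuclear operators spelled out. The paper's own proof is precisely those two observations, compressed into two sentences.
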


\begin{proof}
The Borel measurability of $\lambda_n$ and $u_n$ follows from the continuity of the canonical inclusion of $N(H)$ into $K(H)$ together with Theorem~\ref{T:measurable Schmidt}. The rest follows from the Schmidt representation of nuclear operators.
\end{proof}

\subsection{Proof of Proposition~\ref{P:type I approx}}

Let us adopt the notation from Subsection~\ref{subsec:C2}. 
Moreover, let $\mu$ be a probability measure and $A=L^\infty(\mu)$. 
Set $W=A\overline{\otimes}B(H)$. Then $W$ is a von Neumann algebra canonically represented in $B(L^2(\mu,H))$ (for a detailed description 
see e.g. \cite[Section 5.3]{Finite}). Moreover, on $L^2(\mu,H)$ we have a canonical conjugation (the pointwise one -- recall that $H=\ell^2(\Gamma)$ is equipped with the coordinatewise conjugation). Therefore we have a natural transpose of any $x\in W$ defined by
$$x^t(\f)=\overline{x^*(\overline{\f})}, \qquad \f\in L^2(\mu,H).$$
Then we have a canonical identification
$$M=A\overline{\otimes}B(H)_a=W_a=\{x\in W\setsep x^t=-x\}.$$
Similarly as in Subsection~\ref{subsec:C2} we denote by $\pi$ the canonical projection of $W$ onto $M$, i.e., $x\mapsto\frac12(x-x^t)$.

Recall that, by \cite[Theorem IV.7.17]{Tak},  $W_*=L^1(\mu,N(H))$ (the Lebesgue-Bochner space). Since $\pi$ is a weak$^*$-weak$^*$ continuous norm-one projection, we have an isometric embedding $\pi_*:M_*\to W_*$ defined by $\pi_*\omega=\omega\circ \pi$.   Moreover, clearly
$$\pi_*(M_*)=\{\omega\in W_*\setsep \omega^t=-\omega\}.$$

\begin{lemma}
Assume that $\g\in L^1(\mu,N(H))=W_*$. Then the following assertions hold.
\begin{enumerate}[$(i)$]
    \item $\g^*(\omega)=(\g(\omega))^*$ $\mu$-a.e.,
    \item $\g^t(\omega)=(\g(\omega))^t$ $\mu$-a.e.
\end{enumerate}
\end{lemma}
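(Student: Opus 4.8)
The statement is a routine ``commuting with pointwise operations'' fact for the Lebesgue--Bochner space $W_*=L^1(\mu,N(H))$, so the plan is to reduce to a density/approximation argument starting from simple tensors. The key observation is that both $\g\mapsto\g^*$ and $\g\mapsto\g^t$ are isometric (conjugate-linear, resp.\ linear) involutions on $W_*$, while the pointwise maps $\f(\omega)\mapsto\f(\omega)^*$ and $\f(\omega)\mapsto\f(\omega)^t$ are isometric involutions on $N(H)$ for each fixed $\omega$; the claim is precisely that the former are induced by the latter.

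\textbf{Step 1: simple tensors.} First I would check the two identities for $\g$ of the form $\g=\chi_E\otimes y$ with $E\subseteq\dom\mu$ measurable and $y\in N(H)$, i.e.\ $\g(\omega)=\chi_E(\omega)y$. Under the trace duality $W_*=L^1(\mu,N(H))$, such a $\g$ acts on $x\in W=A\overline\otimes B(H)$ by integrating $\omega\mapsto\chi_E(\omega)\tr{x(\omega)y}$ against $\mu$ (viewing $x$ as an essentially bounded weak$^*$-measurable $B(H)$-valued function). Then $\g^*(x)=\overline{\g(x^*)}$, and using $\tr{x(\omega)^*y}\bar{}=\tr{x(\omega)y^*}$ one gets that $\g^*$ is represented by $\omega\mapsto\chi_E(\omega)y^*$, which is exactly $(\g(\omega))^*$ a.e. For the transpose one uses $\tr{x(\omega)^t y}=\tr{x(\omega)y^t}$, which is the pointwise version of \eqref{eq transpose preserves traces} applied to the Hilbert space $H$ with its fixed conjugation; this identifies $\g^t$ with $\omega\mapsto\chi_E(\omega)y^t=(\g(\omega))^t$ a.e. By linearity this extends to all simple ($N(H)$-valued) functions.

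\textbf{Step 2: density and passage to the limit.} The simple functions are dense in $L^1(\mu,N(H))$ in the $L^1$-norm. Given an arbitrary $\g\in W_*$, pick simple $\g_k\to\g$ in $L^1(\mu,N(H))$; then $\g_k^*\to\g^*$ and $\g_k^t\to\g^t$ in $W_*$ since $*$ and $t$ are isometric on $W_*$. Passing to a subsequence we may assume $\g_k(\omega)\to\g(\omega)$ in $N(H)$ for $\mu$-a.e.\ $\omega$, and likewise $\g_k^*(\omega)\to\g^*(\omega)$, $\g_k^t(\omega)\to\g^t(\omega)$ a.e.\ (interpreting $\g^*,\g^t$ as the corresponding $N(H)$-valued functions). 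Since $y\mapsto y^*$ and $y\mapsto y^t$ are isometric on $N(H)$, we also have $\g_k(\omega)^*\to\g(\omega)^*$ and $\g_k(\omega)^t\to\g(\omega)^t$ a.e. Combining with Step~1, which gives $\g_k^*(\omega)=\g_k(\omega)^*$ and $\g_k^t(\omega)=\g_k(\omega)^t$ a.e., and using a.e.\ uniqueness of limits in $N(H)$, we conclude $\g^*(\omega)=\g(\omega)^*$ and $\g^t(\omega)=\g(\omega)^t$ $\mu$-a.e., as desired.

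\textbf{Main obstacle.} The only slightly delicate point is bookkeeping about what ``$\g^*$'' and ``$\g^t$'' mean: a priori they are defined as elements of $W_*$ (via $\g^*(x)=\overline{\g(x^*)}$ and $\g^t(x)=\g(x^t)$ for $x\in W$), and one must argue that under the identification $W_*\cong L^1(\mu,N(H))$ they are represented by genuine $N(H)$-valued functions to which the pointwise statement applies. This is handled by the fact that $\pi_*(M_*)\subseteq W_*$ and, more basically, by the explicit trace-duality description of $W_*$ recalled above from \cite[Theorem IV.7.17]{Tak}; once one works with concrete representing functions, everything reduces to the scalar identities $\overline{\tr{a^*b}}=\tr{ab^*}$ and $\tr{a^tb}=\tr{ab^t}$ for $a\in B(H)$, $b\in N(H)$, the latter being a pointwise instance of \eqref{eq transpose preserves traces}. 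No genuine difficulty arises; the proof is a standard simple-function approximation.
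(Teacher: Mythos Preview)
Your proof is correct and follows essentially the same strategy as the paper: reduce to simple tensors $\chi_E\otimes y$ by density in $L^1(\mu,N(H))$, and verify the identities there. The one place where the paper is a bit cleaner is your Step~1: rather than viewing a general $x\in W=L^\infty(\mu)\overline{\otimes}B(H)$ as an ``essentially bounded weak$^*$-measurable $B(H)$-valued function'' (which is not quite right when $H$ is non-separable), the paper checks that $(\chi_E y)^*=\chi_E y^*$ and $(\chi_E y)^t=\chi_E y^t$ as elements of $W_*$ by pairing both sides against the elementary tensors $f\otimes x$ (which linearly generate a weak$^*$-dense subspace of $W$), using $(f\otimes x)^*=\overline{f}\otimes x^*$ and $(f\otimes x)^t=f\otimes x^t$ together with \eqref{eq transpose preserves traces}. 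Your Step~2 spells out the a.e.\ subsequence argument that the paper leaves implicit in the phrase ``linearly dense''.
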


\begin{proof} Let us start by explaining the meaning. On the left-hand side we consider the involution and transpose applied to $\g$ as to a functional on $W$, while on the right-hand side these operations are applied to the nuclear operators $\g(\omega)$. 

Observe that it is enough to prove the equality for $\g=\chi_E y$ (where $E$ is a measurable set and $y\in N(H))$ as functions of this form are linearly dense in $L^1(\mu,N(H))$, i.e., we want to prove
$$(\chi_E y)^*=\chi_E y^*\mbox{ and }(\chi_E y)^t=\chi_E y^t.$$
It is clear that the elements on the right-hand side belong to $L^1(\mu,N(H))=W_*$, so the equality may be proved as equality of functionals. Since these functionals are linear and weak$^*$-continuous on $W$, it is enough to prove the equality on the generators $f\otimes x$, $f\in L^\infty(\mu)$, $x\in B(H)$.

So, fix such $f$ and $x$ and recall that
$$(f\otimes x)^*=\overline{f}\otimes x^*\mbox{ and }(f\otimes x)^t=f\otimes x^t.$$
Indeed, the first equality follows from the very definition of the von Neumann tensor product, the second one is proved in the computation before Lemma 5.10 in \cite{Finite}.
Hence we have
$$\begin{aligned}
\ip{(\chi_E y)^*}{f\otimes x}&=\overline{\ip{\chi_Ey}{\overline{f}\otimes x^*}}=
\overline{\int_E\overline{f}\di\mu \cdot\tr{yx^*}}
=\int_E f\di\mu \cdot\tr{(yx^*)^*}\\&=\int_E f\di\mu \cdot\tr{xy^*}=\int_E f\di\mu \cdot\tr{y^*x}=\ip{\chi_E y^*}{f\otimes x}\end{aligned}$$
and, similarly, by \eqref{eq transpose preserves traces}, we get
$$\begin{aligned}
\ip{(\chi_E y)^t}{f\otimes x}&=\ip{\chi_Ey}{f\otimes x^t}=
\int_E f\di\mu \cdot\tr{yx^t}
=\int_E f\di\mu \cdot\tr{(yx^t)^t}\\&=\int_E f\di\mu \cdot\tr{xy^t}=\int_E f\di\mu \cdot\tr{y^tx}=\ip{\chi_E y^t}{f\otimes x}.\end{aligned}$$
\end{proof}

It easily follows that
$$\pi_*(M_*)=L^1(\mu,N(H)_a).$$

\begin{lemma}\label{L:sepred} Let $\g\in L^1(\mu,N(H))=W_*$. Then the following assertions hold.
\begin{enumerate}[$(i)$]
    \item $\ip{f\otimes x}{\g}=\int f(\omega)\tr{x\g(\omega)}\di\mu(\omega)$ for $f\in L^\infty(\mu)$ and $x\in B(H)$.
    \item There is a projection $p\in B(H)$ with separable range such that $p\g(\omega)p=\g(\omega)$ $\mu$-a.e. In this case we have    $(1\otimes p)\g(1\otimes p)=\g$, i.e.,
    $$\ip{T}{\g}=\ip{(1\otimes p)T(1\otimes p)}{\g}\mbox{ for }T\in W.$$
\end{enumerate}
\end{lemma}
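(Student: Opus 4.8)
The plan is to prove the two assertions of Lemma~\ref{L:sepred} essentially by reducing everything to the definition of the Lebesgue--Bochner space $L^1(\mu,N(H))$ and the identification $W_*=L^1(\mu,N(H))$ coming from \cite[Theorem IV.7.17]{Tak}. For $(i)$, the pairing between $W=A\overline{\otimes}B(H)$ and $W_*=L^1(\mu,N(H))$ on an elementary tensor $f\otimes x$ should be exactly the integral of the pointwise trace duality; I would first check this on simple functions $\g=\chi_E y$ with $y\in N(H)$, where it is immediate from $\ip{f\otimes x}{\chi_E y}=\int_E f\di\mu\cdot\tr{xy}$, and then extend by linearity and density of simple functions in $L^1(\mu,N(H))$ together with weak$^*$-continuity of $T\mapsto\ip{T}{\g}$ and boundedness. (This is the same mechanism already used in the proof of the preceding lemma.)

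For $(ii)$, the point is separability of the range. Since $\g\in L^1(\mu,N(H))$, by definition of the Bochner integral $\g$ is $\mu$-essentially separably valued, so there is a $\mu$-null set outside of which $\g(\omega)$ takes values in a fixed separable subspace $Y\subseteq N(H)$. Each nuclear operator in $Y$ has separable range and separable "co-range", and a countable dense subset of $Y$ therefore involves only countably many vectors of $H$; let $p\in B(H)$ be the projection onto the closed span of all these vectors (together with the corresponding vectors for the adjoints). Then $p$ has separable range and $p\g(\omega)p=\g(\omega)$ for $\mu$-a.e.\ $\omega$: indeed this holds on the countable dense set by construction, hence on all of $Y$ by continuity of $y\mapsto pyp$ on $N(H)$, hence $\mu$-a.e. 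The identity $(1\otimes p)\g(1\otimes p)=\g$ in $W_*$ then follows because, using $(i)$ and the fact that $(1\otimes p)T(1\otimes p)$ again lies in $W$, for every elementary tensor $f\otimes x$ one has
$$\ip{f\otimes x}{(1\otimes p)\g(1\otimes p)}=\int f(\omega)\tr{p x p\,\g(\omega)}\di\mu(\omega)=\int f(\omega)\tr{x\,p\g(\omega)p}\di\mu(\omega)=\ip{f\otimes x}{\g},$$
and elementary tensors are weak$^*$-dense in $W$ while both sides are weak$^*$-continuous and bounded; the displayed identity for general $T\in W$ is just a restatement of this.

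The main obstacle I anticipate is purely bookkeeping: making precise the claim that $(1\otimes p)\g(1\otimes p)$, computed as a functional on $W$, corresponds to the pointwise product $\omega\mapsto p\g(\omega)p$ in $L^1(\mu,N(H))$. This requires knowing that multiplication by $1\otimes p$ on the left and right of an element of $W_*$ acts pointwise $\mu$-a.e., which is itself a small computation along the same lines as the previous lemma (reduce to $\g=\chi_E y$, then to generators $f\otimes x$, and use $(f\otimes x)(1\otimes p)=f\otimes xp$, etc.). Once that is granted, everything else is routine, and the separability of $H$ is used only to ensure that the countably many vectors produced above indeed span a separable—hence via a single projection $p$—subspace.
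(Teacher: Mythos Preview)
Your proposal is correct and follows essentially the same route as the paper: for $(i)$ both reduce to $\g=\chi_E y$ by linearity and density in $L^1(\mu,N(H))$, and for $(ii)$ both use essential separability of $\g$ to produce $p$ and then verify the final identity on elementary tensors $f\otimes x$ via $(i)$ and the trace identity $\tr{pxp\,\g(\omega)}=\tr{x\,p\g(\omega)p}$. Your closing remark that ``the separability of $H$ is used'' is a slip---countably many vectors span a separable subspace in any Hilbert space, and indeed the lemma does not assume $H$ separable---but this does not affect the argument.
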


\begin{proof}
$(i)$ Fix $f\in L^\infty(\mu)$ and $x\in B(H)$. Consider both the left hand side and the right hand side as functionals depending on $\g$. Since both functionals are linear and continuous on $L^1(\mu,N(H))$, it is enough to prove the equality for $\g=\chi_E y$ where $E$ is a measurable set and $y\in N(H)$. In this case we have
$$\ip{f\otimes x}{\chi_E y}=\int_E f\di\mu \tr{xy},$$
so the equality holds.

$(ii)$ Note that $\g$ is essentially separably-valued, so there is a separable subspace $Y\subset N(H)$ such that $\g(\omega)\in Y$ $\mu$-a.e. Since for any $y\in N(H)$ there is a projection $q$ with separable range with $qyq=y$ (due the the Schmidt representation),
the existence of $p$ easily follows.

To prove the last equality it is enough to verify it for the generators $T=f\otimes x$ and this easily follows from $(i)$.
\end{proof}

\begin{prop}\label{P:L1 measurable repr}
Let $\g\in L^1(\mu,N(H))$. Then there are a separable subspace $H_0\subset H$, a sequence $(\zeta_n)$ of nonnegative measurable functions and a sequence $(\uu_n)$ of measurable mappings with values in $K(H_0)$ such that the following holds for each $\omega$:
\begin{enumerate}[$(a)$]
       \item $\zeta_{n+1}(\omega)<\zeta_n(\omega)$ whenever $\zeta_n(\omega)>0$;
        \item $\uu_n(\omega)$ is a finite rank partial isometry on $H_0$;
    \item $\uu_n(\omega)=0$ whenever $\zeta_n(x)=0$;
    \item the partial isometries $\uu_k(\omega)$, $k\in\en\cup\{0\}$, are pairwise orthogonal;
    \item $\g=\sum_{n=0}^\infty\zeta_n\uu_n$ where the series converges absolutely almost everywhere and also in the  norm of $L^1(\mu,N(H))$.
\end{enumerate}
\end{prop}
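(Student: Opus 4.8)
The plan is to read off $\zeta_n$ and $\uu_n$ simply by composing $\g$ with the measurable Schmidt data furnished by Theorem~\ref{T:measurable Schmidt} and Proposition~\ref{P:measurable nuclear rep}, after first shrinking $H$ to a separable subspace adapted to $\g$. First I would invoke Lemma~\ref{L:sepred}$(ii)$ to fix a projection $p\in B(H)$ with separable range $H_0:=pH$ such that $p\,\g(\omega)\,p=\g(\omega)$ for $\mu$-almost every $\omega$. Modifying $\g$ on a $\mu$-null set (which does not change it as an element of $L^1(\mu,N(H))$) I may then assume that $\g(\omega)$ is supported on $H_0$ for \emph{every} $\omega$, i.e. $\g(\omega)\in N(H_0)$, where $N(H_0)=\{y\in N(H)\setsep pyp=y\}$ is viewed inside $N(H)$ in the obvious way.

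Since $H_0$ is separable, Theorem~\ref{T:measurable Schmidt} applied to $H_0$ supplies mappings $\lambda_n\colon K(H_0)\to[0,\infty)$ and $\uu_n\colon K(H_0)\to K(H_0)$, and by Proposition~\ref{P:measurable nuclear rep} their restrictions to $N(H_0)$ are Borel measurable with respect to the nuclear norm. I put $\zeta_n:=\lambda_n\circ\g$ and $\uu_n:=\uu_n\circ\g$ for $n\in\en\cup\{0\}$ (overloading the symbol $\uu_n$). Because $\g$, regarded as a map into the separable Banach space $(N(H_0),\norm{\cdot}_N)$, is Bochner and hence Borel measurable, each composition is measurable: $\zeta_n$ is a nonnegative measurable function and $\uu_n$ is a measurable map into $N(H_0)\subseteq K(H_0)$, measurable for the nuclear norm and a fortiori for the operator norm. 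Evaluating assertions $(b)$, $(d)$, $(e)$, $(f)$ of Theorem~\ref{T:measurable Schmidt} at $x=\g(\omega)$ gives properties $(a)$, $(b)$, $(c)$, $(d)$ of the proposition, for every $\omega$.

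It remains to verify $(e)$. For each $\omega$, assertion $(g)$ of Theorem~\ref{T:measurable Schmidt} together with Proposition~\ref{P:measurable nuclear rep} (applied to $\g(\omega)\in N(H_0)$) gives
$$\g(\omega)=\sum_{n=0}^{\infty}\zeta_n(\omega)\,\uu_n(\omega)\qquad\text{and}\qquad\sum_{n=0}^{\infty}\zeta_n(\omega)\,\norm{\uu_n(\omega)}_N=\norm{\g(\omega)}_N,$$
the first series converging absolutely in the nuclear norm; this is exactly the pointwise part of $(e)$. In particular $\norm{\zeta_n(\omega)\uu_n(\omega)}_N=\zeta_n(\omega)\norm{\uu_n(\omega)}_N\le\norm{\g(\omega)}_N$ for all $n$ and $\omega$, so each $\zeta_n\uu_n$ lies in $L^1(\mu,N(H))$, and by Tonelli's theorem
$$\sum_{n=0}^{\infty}\int\zeta_n(\omega)\norm{\uu_n(\omega)}_N\di\mu(\omega)=\int\sum_{n=0}^{\infty}\zeta_n(\omega)\norm{\uu_n(\omega)}_N\di\mu(\omega)=\int\norm{\g(\omega)}_N\di\mu(\omega)<\infty,$$
which says that $\sum_n\zeta_n\uu_n$ converges absolutely in $L^1(\mu,N(H))$; since its partial sums converge to $\g(\omega)$ in $N(H)$ for every $\omega$, the $L^1$-sum is $\g$. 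The only point that genuinely needs care is the measurability of $\omega\mapsto\uu_n(\g(\omega))$ with respect to the nuclear norm, and this is precisely what Proposition~\ref{P:measurable nuclear rep} (resting in turn on Theorem~\ref{T:measurable Schmidt} and the reduction to a separable range) was set up to supply; once that is granted, the remaining steps are routine bookkeeping, the only analytic ingredient being Tonelli's theorem.
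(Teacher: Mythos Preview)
Your proof is correct and follows essentially the same approach as the paper: reduce to a separable $H_0$ via Lemma~\ref{L:sepred}$(ii)$, compose $\g$ with the Schmidt data from Theorem~\ref{T:measurable Schmidt}, invoke Proposition~\ref{P:measurable nuclear rep} for measurability in the nuclear norm, and read off $(a)$--$(d)$ pointwise. The only cosmetic difference is that for $(e)$ you use Tonelli to get absolute convergence in $L^1(\mu,N(H))$, whereas the paper phrases the same step via the dominated convergence theorem for the Bochner integral; your explicit modification of $\g$ on a null set to secure the ``for each $\omega$'' formulation is a nice touch of extra care.
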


\begin{proof}
Let $p\in B(H)$ be a projection with separable range provided by
Lemma \ref{L:sepred}$(ii)$ and set $H_0=pH$. Let $(\lambda_n)$ and $(u_n)$ be the mappings provided by Theorem~\ref{T:measurable Schmidt}. 
Let $\uu_n(\omega)=u_n(\g(\omega))$ and $\zeta_n(\omega)=\lambda_n(\g(\omega))$. Then these functions are measurable due to measurability of $\g$ and Proposition~\ref{P:measurable nuclear rep}.
Assertions $(a)-(d)$ now follow from Theorem~\ref{T:measurable Schmidt}.

By Proposition~\ref{P:measurable nuclear rep} we get the first statement of $(e)$ and, moreover,
$$\sum_n\norm{\zeta_n(\omega)\uu_n(\omega)}=\norm{\g(\omega)}\ \mu\mbox{-a.e.},$$
hence the convergence holds also in the norm of $L^1(\mu,N(H))$, by the Lebesgue dominated convergence theorem for Bochner integral.
\end{proof}

Set
$$W_0=\{\f:\Omega\to B(H)\setsep \f\mbox{ is bounded, measurable and has separable range}
\}.$$
 By a measurable function we mean a strongly measurable one, i.e., an almost everywhere limit of simple functions. However, note that weak measurability is equivalent in this case by Pettis measurability theorem
 as we consider only functions with separable range.
 
Then $W_0$ is clearly a C$^*$-algebra when equipped with the pointwise operation and supremum norm. 

We remark that the following lemma seems to be close to the results of \cite[Section IV.7]{Tak}. However, it is not clear how to apply these results in our situation, so we give the proofs.

\begin{lemma}
For  $\f\in W_0$ and $\h\in L^2(\mu,H)$ define the function $T_{\f}\h$ by the formula
 $$T_{\f}\h (\omega)= \f(\omega)(\h(\omega)),\quad \omega\in\Omega.$$
\begin{enumerate}[$(i)$]
    \item For each $\f\in W_0$ the mapping $T_{\f}$ is a bounded linear operator on $L^2(\mu,H)$ which belongs to $W$ and satisfies  $\norm{T_{\f}}\le\norm{\f}_\infty$.
    \item If $\f\in W_0$ and $\g\in W_*=L^1(\mu,N(H))$, then
    $$\ip{T_{\f}}{\g}=\int\tr{\f(\omega)\g(\omega)}\di\mu(\omega).$$
    \item $T_{\f}$ is a partial isometry (a projection) in $W$ whenever $\f(\omega)$ is a partial isometry (a projection) $\mu$-a.e.
    \item If $\g\in L^1(\mu,N(H))$ is represented as in Proposition~\ref{P:L1 measurable repr}$(e)$, then $s(\g)\le \sum_n T_{\uu_n^*}$ where series converges  in the SOT topology in $W$.
\end{enumerate}
\end{lemma}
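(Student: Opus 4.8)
The statement is a routine but technically delicate lemma verifying that $\f\mapsto T_{\f}$ behaves like the expected amplification map into $W = A\overline{\otimes}B(H)$, and that it interacts well with the measurable Schmidt decomposition. I would prove the four assertions in order, as $(ii)$--$(iv)$ each lean on the previous ones.

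For $(i)$, the plan is first to check that for $\f\in W_0$ the function $T_{\f}\h$ is indeed in $L^2(\mu,H)$ with $\norm{T_{\f}\h}_2\le\norm{\f}_\infty\norm{\h}_2$ (pointwise $\norm{\f(\omega)\h(\omega)}\le\norm{\f}_\infty\norm{\h(\omega)}$, then integrate), so $T_{\f}$ is bounded and linear with $\norm{T_{\f}}\le\norm{\f}_\infty$. Measurability of $\omega\mapsto\f(\omega)(\h(\omega))$ follows from separability of the range of $\f$ and Pettis' theorem. To see $T_{\f}\in W = A\overline{\otimes}B(H)$: for a simple function $\f=\sum_{i}\chi_{E_i}\otimes x_i$ one has $T_{\f}=\sum_i \chi_{E_i}\otimes x_i\in A\otimes B(H)$ (here $\chi_{E_i}$ acts on $L^2(\mu)$ as multiplication, $x_i$ on $H$); a general $\f\in W_0$ is a uniform limit of simple functions, and $T_{(\cdot)}$ is norm-continuous by the estimate just obtained, so $T_{\f}$ lies in the norm closure of $A\otimes B(H)$, hence in $W$.

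For $(ii)$, by linearity and norm-continuity in $\f$ (both sides are continuous in $\f\in W_0$, using the nuclear-norm integrability of $\g$ on the right) it suffices to check the formula on simple $\f=\chi_E\otimes x$, where it reduces to Lemma~\ref{L:sepred}$(i)$: $\ip{\chi_E\otimes x}{\g}=\int_E\tr{x\g(\omega)}\di\mu(\omega)=\int\tr{(\chi_E\otimes x)(\omega)\,\g(\omega)}\di\mu(\omega)$. For $(iii)$, $T_{\f}^*=T_{\f^*}$ where $\f^*(\omega)=\f(\omega)^*$ (an easy computation with the inner product on $L^2(\mu,H)$, plus the fact that $\f^*\in W_0$ too), and the defining relation of a partial isometry ($e e^* e = e$) or a projection ($p=p^*=p^2$) holds pointwise $\mu$-a.e., hence holds for $T_{\f}$ since $T_{(\cdot)}$ is multiplicative on $W_0$ (again immediate from the pointwise definition).

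For $(iv)$, write $\g=\sum_n\zeta_n\uu_n$ as in Proposition~\ref{P:L1 measurable repr}$(e)$; since the $\uu_n(\omega)$ are pairwise orthogonal partial isometries and $\zeta_n(\omega)>0$ exactly on the support, the pointwise support tripotent of $\g(\omega)$ (as a nuclear operator, i.e.\ the partial isometry in its polar decomposition) is $\sum_n\uu_n(\omega)$, the SOT-convergent sum of pairwise orthogonal partial isometries. One then needs to identify $s(\g)\in W$. I would argue that the support projections $p_\ell(\g)$ of $\g$ in $W$ (final and initial) are given pointwise, so that $P_2(e)\g = \g$ for $e=\sum_n T_{\uu_n^*}$ — or more directly, that $e':=\sum_n T_{\uu_n^*}$ is a tripotent in $W$ (by $(iii)$ and pairwise orthogonality of the $\uu_n^*$, the partial sums are partial isometries whose orthogonality passes to the SOT limit) at which $\g$ attains its norm: $\ip{e'}{\g}=\int\tr{\sum_n\uu_n(\omega)^*\cdot\g(\omega)}\di\mu=\int\sum_n\zeta_n(\omega)\tr{\uu_n(\omega)^*\uu_n(\omega)}\di\mu=\int\norm{\g(\omega)}_N\di\mu=\norm{\g}$, using $(ii)$. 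Then the minimality property \eqref{eq minimality of the support tripotent} of the support tripotent gives $s(\g)\le e'$, which is the claim.

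\textbf{Main obstacle.} The delicate point is $(iv)$: one must be careful that $e'=\sum_n T_{\uu_n^*}$ really is a well-defined tripotent in $W$ and that SOT-convergence is the right mode (the partial sums need not converge in norm), and that the ``$\tr$ commutes with the sum'' step inside the integral is justified by the absolute convergence in $L^1(\mu,N(H))$ from Proposition~\ref{P:L1 measurable repr}$(e)$ together with $\abs{\tr{\uu_n(\omega)^*\g(\omega)}}\le\norm{\zeta_n(\omega)\uu_n(\omega)}_N$. Also, swapping $\uu_n^*$ and $\uu_n$ freely under the trace requires the orthogonality relations among the $\uu_k$ so that cross terms $\tr{\uu_n^*\zeta_m\uu_m}$ vanish for $m\ne n$. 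Everything else is a bookkeeping exercise with simple-function approximation and the Bochner integral.
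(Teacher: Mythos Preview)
Your approach matches the paper's almost exactly: the same pointwise estimate and measurability check for $(i)$, the same reduction to elementary tensors via Lemma~\ref{L:sepred}$(i)$ for $(ii)$, the $*$-homomorphism observation for $(iii)$, and for $(iv)$ the same computation $\ip{\sum_nT_{\uu_n^*}}{\g}=\norm{\g}$ followed by \eqref{eq minimality of the support tripotent}. (For $(i)$ the paper simply cites \cite[Lemma 5.12]{Finite} to get $T_{\f}\in W$, whereas you give a self-contained approximation argument.)

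One technical slip to fix: a general $\f\in W_0$ is \emph{not} a uniform limit of simple (finitely-valued) functions --- a bounded separable range need not be totally bounded. What is true, and what the paper uses in its proof of $(ii)$, is that countably-valued functions are uniformly dense in $W_0$. For such $\f=\sum_k\chi_{E_k}x_k$ with disjoint $E_k$ one shows $T_{\f}=\sum_kT_{\chi_{E_k}x_k}$ in SOT (the partial sums are uniformly bounded by $\norm{\f}_\infty$ and converge pointwise in $L^2(\mu,H)$), hence $T_{\f}\in W$ since $W$ is weak$^*$-closed; then the uniform limit handles the general case. With that adjustment your argument goes through.
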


\begin{proof}
$(i)$ It is clear that the mapping $\h\mapsto T_{\f}\h$ is a linear mapping assigning to each $H$-valued function another $H$-valued function. Moreover,
$$\norm{T_{\f}\h(\omega)}=\norm{\f(\omega)(\h(\omega))}\le\norm{\f(\omega)}\norm{\h(\omega)}\le\norm{\f}_\infty\norm{\h(\omega)}.$$
In particular, if a sequence $(\h_n)$ converges almost everywhere to a function $\h$, then $(T_{\f}\h_n)$ converges almost everywhere to $T_{\f}\h$. It follows that $T_{\f}$ is well defined on $L^2(\mu,H)$
 (in the sense that if $\h_1=\h_2$ a.e., then $T_{\f}\h_1=T_{\f}\h_2$ a.e.).

The next step is to observe that $T_{\f}\h$ is measurable whenever $\h$ is measurable. This is easy for simple functions. Further,
any measurable function is an a.e. limit of a sequence of simple functions, hence the measurability follows by the above.

Further, it follows from the above inequality that $\norm{T_{\f}\h}_2\le \norm{\f}_\infty\norm{\h}_2$, thus $\norm{T_{\f}}\le\norm{\f}_\infty$. Finally,  by \cite[Lemma 5.12]{Finite} we get that $T_{\f}\in W$.

$(ii)$ Let us first show that $\f\g\in L^1(\mu,N(H))$ whenever $\f\in W_0$ and $\g\in L^1(\mu,N(H))$. By the obvious inequalities the only thing to be proved is measurability of this mapping. 
This is easy if $\g$ is a simple function. The general case follows from the facts that any measurable function is an a.e. limit of simple functions and that measurability is preserved by a.e. limits of sequences.

It remains to prove the equality.
Since the functions from $W_0$ are separably valued, countably valued functions are dense in $W_0$. So, it is enough to prove the equality for countably valued functions. To this end let
$$\f=\sum_{k\in\en}\chi_{E_k}x_k,$$
where $(E_k)$ is a disjoint sequence of measurable sets and $(x_k)$ is a bounded sequence in $B(H)$. For any $\h\in L^2(\mu,H)$ we have
$$T_{\f}\h(\omega)=\sum_{k\in\en}\chi_{E_k}(\omega)x_k(\h(\omega)),\qquad \omega\in \Omega.$$
Since $T_{\f}\h\in L^2(\mu,H)$ by $(i)$ and the sets $E_k$ are pairwise disjoint, we deduce that
$$T_{\f}\h=\sum_{k\in\en}T_{\chi_{E_k}x_k}\h,$$
where the series converges in $L^2(\mu,H)$.
Since this holds for any $\h\in L^2(\mu,H)$, we deduce that
$$T_{\f}=\sum_{k\in\en}T_{\chi_{E_k}x_k}$$
unconditionally in the SOT topology, hence also in the weak$^*$ topology of $W$. Thus, for any $\g\in W_*=L^1(\mu,N(H))$ we get
$$\ip{T_{\f}}{\g}=\sum_{k\in\en}\ip{T_{\chi_{E_k}x_k}}{\g}=\sum_{k\in\en}\int_{E_k}\tr{x_k\g(\omega)}\di\mu(\omega)=\int\tr{\f(\omega)\g(\omega)}\di\mu(\omega),
$$
where in the second equality we used Lemma~\ref{L:sepred}$(i)$.

$(iii)$ This is obvious as the mapping $\f\mapsto T_{\f}$ is clearly a $*$-homomorphism of $W_0$ into $W$.

$(iv)$ First observe that the mappings $\uu_n^*$ belong to $W_0$. Indeed, by Proposition~\ref{P:L1 measurable repr} the mapping $\uu_n$ is measurable and has separable range (as $K(H_0)$ is separable). Moreover, $\norm{\uu_n}_\infty\le1$ for each $n\in\en$. These properties are shared by $\uu_n^*$, hence $\uu_n^*\in W_0$.

By $(iii)$ we deduce that $T_{\uu_n^*}$ is a partial isometry for any $n\in\en$. Moreover, these partial isometries are pairwise orthogonal (cf. property $(d)$ from Proposition~\ref{P:L1 measurable repr}), hence $U=\sum_n T_{\uu_n^*}$ is a well-defined partial isometry in $W$. Moreover, by taking $\g$ as in Proposition~\ref{P:L1 measurable repr}$(e)$, we have 
$$\begin{aligned}
\ip{U}{\g}&=\sum_{n=0}^\infty\ip{T_{\uu_n^*}}{\g}=\sum_{n=0}^\infty\int \tr{\uu_n^*(\omega)\g(\omega)}\di\mu\omega\\&=
\sum_{n=0}^\infty\int \zeta_n(\omega)\tr{\uu_n^*(\omega)\uu_n(\omega)}\di\mu(\omega)
\\&=\int\sum_{n=0}^\infty \zeta_n(\omega)\tr{\uu_n^*(\omega)\uu_n(\omega)}\di\mu(\omega)=\int \norm{\g(\omega)}\di\mu(\omega)=\norm{\g},\end{aligned}$$
thus $s(\g)\le U$. 
\end{proof}

\begin{proof}[Proof of Proposition~\ref{P:type I approx} for $A\overline{\otimes}B(H)_a$]
Fix any $\g\in M_*=L^1(\mu,N(H)_a)$ and $\varepsilon>0$. Fix its representation from Proposition~\ref{P:L1 measurable repr}. Fix $N\in\en$ such that
$$\norm{\sum_{n>N}\zeta_n\uu_n}<\varepsilon.$$
This is possible by the convergence established in Proposition~\ref{P:L1 measurable repr}.
Note that
$$-\g=\g^t=\sum_{n=1}^\infty\zeta_n\uu_n^t,$$
hence 
$\uu_n^t=-\uu_n^t$.  (Note that the representation from Proposition~\ref{P:L1 measurable repr} is unique due to the uniqueness of the Hilbert-Schmidt representation). 
Let 
$$\g_1=\sum_{n=1}^N\zeta_n\uu_n.$$
Then $\g_1\in M_*$ as $\g_1^t=-\g_1$.
Further, let $$\vv=\sum_{n=1}^N \uu_n.$$ 
We have $\g-\g_1\perp \g_1$ as 
$$s(\g_1)\le T_{\vv^*} \mbox{ and }s(\g-\g_1)\le \sum_{n>N}T_{\uu_n^*}$$
and the two tripotents on the right-hand sides are orthogonal. Moreover,
$T_{\vv^*}$ is a finite tripotent in $M$ by \cite[Proposition 5.31(i) and Lemma 5.16(ii)]{Finite}. 
This completes the proof.
\end{proof}

\begin{proof}[Proof of Proposition~\ref{P:type I approx} for $A\overline{\otimes}B(H)$] The proof is an easier version of the previous case. Fix $\g\in W_*=L^1(\mu,N(H))$ and $\varepsilon>0$. In the same way we find $N$ and define $\g_1$ and $\vv$. We omit the considerations of the transpose and antisymmetry. Finally, $T_{\vv^*}$ is a finite tripotent in $W$
by \cite[Proposition 4.7 and Lemma 5.16(ii)]{Finite}.
\end{proof}

\section{JW$^*$-algebras}\label{sec:JW*}

The aim of this section is to prove
the following proposition which will be used to prove Proposition~\ref{P:key decomposition alternative}.

\begin{prop}\label{P:key decomposition} Let $M$ be a JBW$^*$-algebra, $\varphi\in M_*$ and $\varepsilon>0$. Then there are functionals $\varphi_1,\varphi_2\in M_*$
and a unitary element $w\in M$ satisfying the following conditions.
\begin{enumerate}[$(i)$]
    \item\label{it:key decomposition i} $\norm{\varphi_1}\le\norm{\varphi}$;
    \item $\norm{\varphi_2}<\varepsilon$;
    \item $s(\varphi_1)\le w$;
    \item\label{it:key decomposition iv} $\norm{\cdot}_\varphi^2\le\norm{\cdot}_{\varphi_1}^2+\norm{\cdot}_{\varphi_2}^2$.
\end{enumerate}
\end{prop}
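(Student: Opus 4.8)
\textbf{Proof strategy for Proposition~\ref{P:key decomposition}.}

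The plan is to reduce the general JBW$^*$-algebra $M$ to the special subclass treated in Section~\ref{sec:type I} (finite or type~I) by exploiting the structural decomposition recalled in Section~\ref{sec: notation}. A JBW$^*$-algebra $M$ splits as an $\ell_\infty$-sum $M_1\oplus^{\ell_\infty}M_2$ where $M_1$ is finite and $M_2$ is properly infinite; and by the type decomposition $M_2$ splits further into a type~I part and a continuous (type~II$_\infty$/type~III) part. The case where $M$ is finite or type~I is exactly Proposition~\ref{P:type I approx}, which already yields (i)--(iv) with the stronger conclusion that $\varphi_1\perp\varphi_2$ (so that $\norm{\cdot}_\varphi^2=\norm{\cdot}_{\varphi_1}^2+\norm{\cdot}_{\varphi_2}^2$ holds with equality by the remark preceding that proposition) and with $s(\varphi_1)$ finite, hence majorized by a unitary $w$ by \cite[Proposition 7.5]{Finite} or \cite[Lemma 3.2(d)]{Finite}. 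So the real content is the properly infinite \emph{continuous} summand, for which I would proceed differently.

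First I would dispose of the easy reduction: since the conclusion of the proposition is visibly stable under $\ell_\infty$-sums (given $\varphi=\sum_k\varphi^{(k)}$ with $\varphi^{(k)}$ supported on the $k$-th summand, apply the result componentwise with $\varepsilon_k$ summing to $\varepsilon$, and form the $\ell_\infty$-sum of the resulting unitaries), it suffices to treat each summand of the $M_1\oplus^{\ell_\infty}M_2$ decomposition separately, and then only the continuous properly infinite summand is new. Here the key point is that $M_2$ continuous properly infinite is in fact \emph{isometrically (triple-)isomorphic to a von Neumann algebra} — indeed it has an abundant supply of unitaries and, being properly infinite with no abelian part, behaves like $B(H)$-type pieces. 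More precisely, I would invoke that a properly infinite JBW$^*$-algebra of this kind contains its support tripotent $s(\varphi)$ inside some $M_2(p)$ for a projection $p$, and use Lemma~\ref{L:rotation}: if we can find a unitary $w$ with $s(\varphi)\in M_2(w)$ we are almost done, because Lemma~\ref{L:rotation} gives $\tilde\varphi$ with $s(\tilde\varphi)\le w$, $\norm{\tilde\varphi}=\norm{\varphi}$ and $\norm{\cdot}_\varphi\le\sqrt2\norm{\cdot}_{\tilde\varphi}$ — but that costs a factor $2$ in the seminorm, which is exactly what Proposition~\ref{P:key decomposition alternative} is trying to avoid. So the subtlety is to get the factor down to $1+\varepsilon$.

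The hard part — and the place where the $\varepsilon$ genuinely enters — is therefore this: in the continuous properly infinite case one must show that $\varphi$ can be split as $\varphi_1+\varphi_2$ with $\norm{\varphi_2}<\varepsilon$ and with $s(\varphi_1)$ majorized by a unitary, \emph{without} perturbing the seminorm by more than the additive $\norm{\cdot}_{\varphi_2}^2$ term. The natural approach is to approximate $\varphi$ in norm by a functional whose support tripotent is ``small'' in a suitable sense — e.g. supported on a finite or $\sigma$-finite projection, or on a projection equivalent to its complement — using that normal functionals on a von Neumann algebra are norm-approximable by those of the form $\tr(\cdot\,y)$ with $y$ of finite rank (the Schmidt/Proposition~\ref{P:measurable nuclear rep} machinery), and then using that in a properly infinite algebra any such small projection sits inside a \emph{halving} $p\oplus(1-p)$ whose exchange symmetry $s+s^*$ (for a partial isometry $s$ with $ss^*=p$, $s^*s=1-p$, if $p\sim 1-p$) is a unitary $w$ with $s(\varphi_1)\le w$. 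Concretely: choose $\varphi_1$ with $s(\varphi_1)$ a finite-rank-type tripotent $e$, write $p=ee^*\vee e^*e$, note $p$ is finite hence $1-p$ is properly infinite hence $p\precsim 1-p$, so there is a partial isometry realizing $p\sim p'\le 1-p$; then $w=e+\text{(a unitary on the complement of the initial/final spaces of $e$)}$ constructed via this equivalence is a genuine unitary of $M$ dominating $e$, giving $s(\varphi_1)\le w$. With $\varphi_2=\varphi-\varphi_1$ we have $\norm{\varphi_2}<\varepsilon$ by the approximation, $\norm{\varphi_1}\le\norm{\varphi}$ after adjusting (or $\le\norm{\varphi}+\text{small}$, absorbed into $\varepsilon$), and (iv) holds — in fact with equality and orthogonality exactly as in the type~I argument — completing the proof. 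I expect the main obstacle to be the bookkeeping needed to realize the unitary $w$ dominating $s(\varphi_1)$ \emph{inside the Jordan algebra} $M$ (as opposed to an ambient von Neumann algebra), which is where one must lean on the finite-tripotent-in-JBW$^*$-algebra results of \cite{Finite}, and the verification that the $\ell_\infty$-patching of all these pieces still yields a single unitary element of the original $M$.
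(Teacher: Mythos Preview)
Your reduction to the $\ell_\infty$-summands is sound, and for the finite and type~I pieces Proposition~\ref{P:type I approx} indeed does the job (the paper's Remark~\ref{Rem}(2) explicitly sanctions this route). The gap is in your treatment of the \emph{continuous} properly infinite summand. You propose to approximate $\varphi$ by a $\varphi_1$ whose support is ``finite-rank-type'' and then orthogonally split $\varphi=\varphi_1+\varphi_2$ as in the type~I argument. But the Schmidt/nuclear-representation machinery of Proposition~\ref{P:measurable nuclear rep} is specific to $B(H)$: in a continuous von Neumann algebra there are no minimal projections, and in type~III there are no nonzero \emph{finite} projections at all, so no nonzero finite tripotents. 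Hence the statement ``normal functionals are norm-approximable by those of the form $\tr(\cdot\,y)$ with $y$ of finite rank'' is simply false there, and there is no orthogonal truncation $\varphi_1\perp\varphi_2$ with $s(\varphi_1)$ finite to appeal to. Your fallback via halving is also incomplete: the support tripotent $e=s(\varphi)$ can perfectly well be a non-unitary isometry (e.g.\ take a faithful normal state composed with a shift on separable $B(H)$), in which case $1-e^*e=0\not\sim 1-ee^*$ and no unitary $w\ge e$ exists --- so you genuinely must perturb $\varphi$, and you have not said how.

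The paper's argument for this case is entirely different and does not attempt an orthogonal decomposition $\varphi=\varphi_1+\varphi_2$ at all. Instead it uses the Wright--Youngson version of Russo--Dye to pick a unitary $w$ with $\varphi(w)>1-\delta$, then the perturbation estimate of Lemma~\ref{l norm-one functional close enough to states at a unitary} (ultimately Lemma~\ref{l perturbation of functionals} from \cite{pfitzner-jot}, applied inside an ambient von Neumann algebra via Bunce's norm-preserving extension) to produce $\psi\in M_*$ with $\psi(w)=1$ and $\norm{\varphi-\psi}<\sqrt{2\delta}$. One then writes
\[
\norm{x}_\varphi^2=\psi\J xx w+(\varphi-\psi)\J xx w+\varphi\J xx{s(\varphi)-w}
\]
and bounds the two error terms by small positive functionals via Lemmata~\ref{l ad-hoc 1 Jordan} and~\ref{l ad-hoc 2}, taking $\varphi_1=\psi$ and $\varphi_2$ a small positive combination. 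Note that here $\varphi_1+\varphi_2\ne\varphi$ and $\varphi_1\not\perp\varphi_2$; the proposition only asks for the seminorm inequality~(iv), which is exactly what this analytic estimate delivers. This Russo--Dye/perturbation route is the missing idea in your sketch.
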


The proof will be done at the end of the section with the help of several lemmata.

We focus mainly on JW$^*$-algebras, i.e., on weak$^*$-closed Jordan $^*$-subalgebras of von Neumann algebras. To this end we recall some notation (cf. \cite[Section III.2]{Tak}).

 Let $A$ be a C$^*$-algebra and let $\phi\in A^*$.  Then we define
 functionals $a\phi$ and $\phi a$  by 
\begin{equation}
a\phi(x)=\phi(xa)
\quad\mbox{ and }\quad
 \phi a(x)=\phi(ax)  \quad\mbox{for } x\in A.
\end{equation}
Note that $a\phi,\phi a\in A^*$ and $\norm{a\phi}\le\norm{a}\norm{\phi}$, $\norm{\phi a}\le\norm{a}\norm{\phi}$.
We recall the natural isometric involution $\phi\mapsto\phi^*$ defined by $\phi^*(x)=\overline{\phi(x^*)}$.
Then clearly
$(a\phi)^*=\phi^*a^*$, $(\phi a)^*=a^*\phi^*$.

If $W$ is a von Neumann algebra and if $\phi\in W_*$, $a\in W$ then $a\phi, \phi a\in W_*$.
Further, given $\phi\in W_*$ we set $\abs{\phi}=s(\phi)\phi$ where $s(\phi)\in W$ is the support tripotent of $\phi$. Then $\phi=s(\varphi)^*\abs{\phi}$ is the polar decomposition of $\phi$ (cf. \cite[Section III.4]{Tak}).
More generally, if $a\in W$ is a norm-one element on which $\phi$ attains its norm then we have $\betr{\phi}=a\phi$, $\phi=a^*\betr{\phi}$, $\betr{\phi^*}=\phi a$ (cf. \eqref{eq minimality of the support tripotent for elements}).
Note that $\betr{\phi}=\betr{\phi}^*$ since $\betr{\phi}$ is positive.
All this is stable by small perturbations as witnessed by the following lemma.

\begin{lemma}[{\cite[Lemma 3.3]{pfitzner-jot}}]\label{l perturbation of functionals}
Let $A$ be a C$^*$-algebra, $\phi$ a functional on $A$ and $a,b$ in the unit ball of $A$.
Then
\begin{eqnarray}
\norm{\phi-a^*\betr{\phi}\;}
   &\leq&
({2\norm{\phi}})^{1/2}\,\,\betr{\,\norm{\phi} - \phi(a)}^{1/2}   \label{glA3_1}\\
\norm{\betr{\phi}-a\phi}
   &\leq&
({2\norm{\phi}})^{1/2}\,\,\betr{\,\norm{\phi} - \phi(a)}^{1/2}   \label{glA3_2}\\
\norm{\betr{\phi^*}-\phi a}
   &\leq&
({2\norm{\phi}})^{1/2}\,\,\betr{\,\norm{\phi} - \phi(a)}^{1/2}.   \label{glA3_3}
\end{eqnarray}
\end{lemma}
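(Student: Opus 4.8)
The statement is the stability version of the polar-decomposition identities recalled immediately above --- the special case $\phi(a)=\norm{\phi}$ being exactly the equalities $\betr{\phi}=a\phi$, $\phi=a^*\betr{\phi}$, $\betr{\phi^*}=\phi a$. The plan is to transport the whole picture into a Hilbert space via the GNS construction applied to the positive functional $\betr{\phi}$, and then to read off all three inequalities from a single use of Cauchy--Schwarz together with the elementary bound $\norm{\xi-\pi(a)\eta}^2\le 2-2\Re\ip{\pi(a)\eta}{\xi}$, valid for unit vectors $\xi,\eta$ and $\norm{\pi(a)}\le1$. The element $b$ plays no role in the conclusion, so I would simply ignore it; and one may assume $\phi\ne0$ and, after rescaling, $\norm{\phi}=1$ (the case $\phi=0$ being trivial).

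Concretely, I would work in $W=A^{**}$, write $v=s(\phi)\in W$, and recall that $\betr{\phi}=v\phi$ is a state on $W$, since $\betr{\phi}(1)=\phi(v)=\norm{\phi}=1$. Let $(\pi,\mathcal{H},\xi)$ be its GNS triple, so that $\pi\colon W\to B(\mathcal{H})$ is a unital $*$-representation, $\norm{\xi}=1$ and $\betr{\phi}(x)=\ip{\pi(x)\xi}{\xi}$ for $x\in W$. Put $\zeta=\pi(v^*)\xi$. The first step --- and really the only delicate one --- is to check, using the identities $\betr{\phi}=v\phi$ and $\phi=v^*\betr{\phi}$, the partial-isometry relation $vv^*v=v$, and $\phi(v)=\norm{\phi}$, that
$$\norm{\zeta}=1\quad\text{and}\quad \phi(x)=\ip{\pi(x)\zeta}{\xi}\ \ (x\in W),$$
and, using in addition $s(\phi^*)=v^*$ and $\phi^*(x)=\overline{\phi(x^*)}$, that $\betr{\phi^*}(x)=\ip{\pi(x)\zeta}{\zeta}$ for $x\in W$.

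Granting this, let $a$ be in the unit ball of $A$ (so $\norm{\pi(a)}\le1$) and let $x$ range over the unit ball of $A$ (so $\norm{\pi(x)}\le1$). Unwinding the left/right actions through the identities above gives
$$(\phi-a^*\betr{\phi})(x)=\ip{\pi(x)\bigl(\zeta-\pi(a)^*\xi\bigr)}{\xi},\qquad (\betr{\phi}-a\phi)(x)=\ip{\pi(x)\bigl(\xi-\pi(a)\zeta\bigr)}{\xi},$$
$$(\betr{\phi^*}-\phi a)(x)=\ip{\pi(x)\zeta}{\zeta-\pi(a)^*\xi}.$$
Taking absolute values, using $\norm{\pi(x)}\le1$ and $\norm{\xi}=\norm{\zeta}=1$, and passing to the supremum over $x$ bounds the three functional norms on the left of \eqref{glA3_1}--\eqref{glA3_3} by $\norm{\zeta-\pi(a)^*\xi}$, $\norm{\xi-\pi(a)\zeta}$ and $\norm{\zeta-\pi(a)^*\xi}$, respectively. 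Finally, since $\ip{\pi(a)\zeta}{\xi}=\phi(a)$ and $\norm{\pi(a)\zeta}\le1$,
$$\norm{\xi-\pi(a)\zeta}^2\le 2-2\Re\phi(a)=2\Re(1-\phi(a))\le 2\betr{1-\phi(a)},$$
and likewise $\norm{\zeta-\pi(a)^*\xi}^2\le 2-2\Re\overline{\phi(a)}=2-2\Re\phi(a)\le 2\betr{1-\phi(a)}$; taking square roots and undoing the normalization $\norm{\phi}=1$ yields the constant $(2\norm{\phi})^{1/2}\betr{\norm{\phi}-\phi(a)}^{1/2}$ in all three cases.

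The main obstacle, as indicated, is the bookkeeping of the second paragraph: pinning down, with the right conventions for $s(\phi)$ and for the actions $a\phi,\phi a$, that the single vector $\zeta=\pi(s(\phi)^*)\xi$ is a unit vector that simultaneously implements $\phi$ as $\ip{\pi(\cdot)\zeta}{\xi}$ and $\betr{\phi^*}$ as $\ip{\pi(\cdot)\zeta}{\zeta}$. Everything afterwards is two lines of algebra and Cauchy--Schwarz. If one prefers to avoid the GNS language, the identical computation can be carried out with the Cauchy--Schwarz inequality for the positive sesquilinear form $(x,y)\mapsto\betr{\phi}(y^*x)$.
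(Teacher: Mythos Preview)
Your argument is correct and self-contained: the GNS vectors $\xi$ and $\zeta=\pi(s(\phi)^*)\xi$ do simultaneously implement $\betr{\phi}$, $\phi$ and $\betr{\phi^*}$ as vector functionals, and the three norm bounds then reduce to the single elementary estimate $\norm{\xi-\pi(a)\zeta}^2\le 2-2\Re\phi(a)\le 2\betr{\,\norm{\phi}-\phi(a)}$.

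The paper takes a different and much shorter route: it does not reprove \eqref{glA3_1} and \eqref{glA3_2} at all but simply cites them from \cite[Lemma~3.3]{pfitzner-jot}, and then obtains \eqref{glA3_3} from \eqref{glA3_2} by the involution trick
\[
\norm{\betr{\phi^*}-\phi a}=\norm{(\betr{\phi^*}-\phi a)^*}=\norm{\betr{\phi^*}-a^*\phi^*}\le(2\norm{\phi^*})^{1/2}\betr{\norm{\phi^*}-\phi^*(a^*)}^{1/2},
\]
together with $\norm{\phi^*}=\norm{\phi}$ and $\betr{\norm{\phi^*}-\phi^*(a^*)}=\betr{\norm{\phi}-\phi(a)}$. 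So the paper's contribution here is only the one-line reduction of the third inequality to the second. Your approach has the advantage of being a uniform, self-contained proof of all three inequalities that makes their common origin transparent; the paper's approach has the advantage of getting \eqref{glA3_3} for free once \eqref{glA3_2} is known, without revisiting any Hilbert-space machinery.
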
\noindent
(As to \eqref{glA3_3}, which is not stated explicitly in \cite[Lemma 3.3]{pfitzner-jot}, note that it
follows easily from \eqref{glA3_2} by $\norm{\betr{\phi^*}-\phi a}=\norm{\betr{\phi^*}-a^*\phi^*}\le(2\norm{\phi^*})^{1/2}\,\,\betr{\norm{\phi^*} - \phi^*(a^*)}^{1/2}=({2\norm{\phi}})^{1/2}\,\,\betr{\,\norm{\phi} - \phi(a)}^{1/2}$.)

There is another way to obtain positive functionals: We can write $\phi=\phi_1-\phi_2+i(\phi_3-\phi_4)$ with positive  $\phi_k\in W_*$ ($k=1, 2, 3, 4$) such that
$\norm{\phi_k-\phi_{k+1}}=\norm{\phi_k}+\norm{\phi_{k+1}}\le\norm{\phi}$, $k=1, 3$ (cf. \cite[Theorem III.4.2]{Tak}).
Then we set 
$$[\phi]=\frac12\sum_{k=1}^4\phi_k=\frac12(\abs{\phi_1-\phi_2}+\abs{\phi_3-\phi_4})$$ 
and obtain that $[\phi]\in W_*$ is positive, $\norm{[\phi]}\le\norm{\phi}$
and $\betr{\phi(a)}\le2[\phi](a)$ for all positive $a\in W$.

Finally, let us remark that if $A$ is a C$^*$-algebra, then $A^{**}$ is a von Neumann algebra and $A^*=(A^{**})_*$, thus $\abs{\phi}$ and $[\phi]$ make sense also for continuous functionals on a C$^*$-algebra.

\begin{lemma}\label{l norm-one functional close enough to states at a unitary Cstar} Let $W$ be von Neumann algebra, let $w\in W$ be a unitary element and $\delta\in(0,1)$. Let $\phi\in W_*$ be a norm-one functional such that $\phi (w) >1-\delta$ (in particular, $\phi(w)\in\er$). Then $\psi := w^* |\phi|$ is a norm-one element of $W_*$ satisfying $\psi(w) =1$ 
and $\|\phi -\psi\| < \sqrt{2\delta}$.
\end{lemma}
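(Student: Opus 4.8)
The plan is to obtain the statement as an immediate consequence of Lemma~\ref{l perturbation of functionals}. Since a von Neumann algebra is in particular a C$^*$-algebra and the polar decomposition of a normal functional stays within $W_*$, I would apply inequality~\eqref{glA3_1} to $\phi$ with $a=b=w$; the unitary $w$ lies in the unit ball of $W$ and $\norm{\phi}=1$, so
$$\norm{\phi-\psi}=\norm{\phi-w^*\betr{\phi}}\le(2\norm{\phi})^{1/2}\,\betr{\,\norm{\phi}-\phi(w)}^{1/2}=\sqrt2\,\bigl(1-\phi(w)\bigr)^{1/2}.$$
Here I use that $\phi(w)$ is real (as recorded in the hypothesis) and that $\phi(w)\le\norm{\phi}\,\norm{w}=1$, so that $\betr{\,\norm{\phi}-\phi(w)}=1-\phi(w)$; since $\phi(w)>1-\delta$ gives $1-\phi(w)<\delta$, this yields $\norm{\phi-\psi}<\sqrt{2\delta}$.

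Next I would verify the two remaining properties of $\psi$. Recalling that $w^*\betr{\phi}$ denotes the functional $x\mapsto\betr{\phi}(xw^*)$, that $w$ is unitary (so $ww^*=1$), and that $\betr{\phi}$ is a positive normal functional with $\norm{\betr{\phi}}=\norm{\phi}=1$, one computes
$$\psi(w)=\betr{\phi}(ww^*)=\betr{\phi}(1)=\norm{\betr{\phi}}=1.$$
Finally $\norm{\psi}=\norm{w^*\betr{\phi}}\le\norm{w^*}\,\norm{\betr{\phi}}=1$, whereas $\psi(w)=1$ together with $\norm{w}=1$ forces $\norm{\psi}\ge1$; hence $\norm{\psi}=1$, and $\psi\in W_*$ because $\betr{\phi}\in W_*$ and multiplication of a normal functional by an element of $W$ preserves normality.

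I do not expect a genuine obstacle: the lemma is a clean corollary of the perturbation estimate~\eqref{glA3_1} together with the one-line computation of $\psi(w)$. The only point needing a moment's care is the passage to the real case --- the inequality $\phi(w)>1-\delta$ is meaningful precisely because $\phi(w)$ is assumed real, and it is this that allows replacing $\betr{\,\norm{\phi}-\phi(w)}$ by $1-\phi(w)$ and hence getting the strict bound $<\sqrt{2\delta}$ rather than merely $\le\sqrt2\,\betr{1-\phi(w)}^{1/2}$.
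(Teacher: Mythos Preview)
Your proof is correct and follows essentially the same approach as the paper's own proof: both apply inequality~\eqref{glA3_1} of Lemma~\ref{l perturbation of functionals} with $a=w$ to bound $\norm{\phi-w^*\betr{\phi}}$, and both compute $\psi(w)=\betr{\phi}(ww^*)=\betr{\phi}(1)=1$ to obtain $\norm{\psi}=1$. You are in fact slightly more careful than the paper in making the strict inequality $<\sqrt{2\delta}$ explicit (the paper writes $\le\sqrt{2\delta}$, though the strict version follows immediately from $\phi(w)>1-\delta$).
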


\begin{proof}
On the one hand we have that $\|\psi \| \le  \| |\phi|\| = \|\phi\|=1$. On the other hand, since $\psi (w) = (w^* |\phi|) (w) = |\phi| (w  w^*) = |\phi| (1) = \| |\phi|\| = 1$ we deduce that $\norm{\psi}=1$.
Applying \eqref{glA3_1} of Lemma \ref{l perturbation of functionals} we obtain $$ \|\phi - w^* |\phi| \| \leq \sqrt{2 } \betr{ 1 - \phi(w)}^{1/2} \leq \sqrt{2 \delta},$$ which finishes the proof.
\end{proof}

We continue by extending the previous lemma to JW$^*$-algebras.

\begin{lemma}\label{l norm-one functional close enough to states at a unitary} Let $M$ be a JW$^*$-algebra, $w\in M$ a unitary element and $\delta\in(0,1)$. Let $\phi\in M_*$ be a norm-one functional such that $\phi (w) >1-\delta$ (in particular, $\phi(w)\in\er$). Then there exists a norm-one functional $\psi\in M_*$ satisfying $\psi(w) =1$
and $\|\phi -\psi\| < \sqrt{2\delta}$. 
\end{lemma}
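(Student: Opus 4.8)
The plan is to reduce the statement for a JW$^*$-algebra $M$ to the already-established von Neumann algebra case, namely Lemma~\ref{l norm-one functional close enough to states at a unitary Cstar}. By definition $M$ is a weak$^*$-closed Jordan $^*$-subalgebra of some von Neumann algebra $W$, and a JW$^*$-algebra unital element $w$ is in particular a tripotent of $W$; moreover, the assumption that $w$ is unitary \emph{in $M$} (so that $M=M_2(w)$ as a triple) should give, via the Peirce calculus recalled in Section~\ref{sec: notation}, that $w$ is a unitary element of the von Neumann subalgebra it generates, or at least a partial isometry of $W$ with $w\circ w^*$ acting as a unit on $M$. The first thing I would do is pin down precisely in what sense $w$ behaves like a unitary inside $W$; the cleanest route is to pass to the von Neumann subalgebra $W_0=w^*Ww$ (or to replace $W$ by the reduced algebra on the range projection of $w$) on which $w$ genuinely is a unitary, and to arrange that $M\subseteq W_0$ is a JW$^*$-subalgebra with $w$ its unit.

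Next I would lift the given $\phi\in M_*$ to a normal functional on $W$. Since $M$ is weak$^*$-closed in $W$, there is a weak$^*$-continuous contractive projection (or at least a Hahn--Banach type extension preserving the norm in the normal predual) producing $\widehat\phi\in W_*$ with $\widehat\phi|_M=\phi$, $\|\widehat\phi\|=1$, and $\widehat\phi(w)=\phi(w)>1-\delta$. Here one must be a little careful: an arbitrary norm-preserving extension need not be normal, but because $M$ is weak$^*$-closed the predual $M_*$ is a quotient of $W_*$, so every element of $M_*$ \emph{is} the restriction of some element of $W_*$ of the same norm — that is exactly what is needed. Then apply Lemma~\ref{l norm-one functional close enough to states at a unitary Cstar} in $W$ (or in $W_0$) to obtain $\widehat\psi=w^*|\widehat\phi|\in W_*$ with $\|\widehat\psi\|=1$, $\widehat\psi(w)=1$, and $\|\widehat\phi-\widehat\psi\|<\sqrt{2\delta}$.

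It remains to push $\widehat\psi$ back down to $M$. Set $\psi=\widehat\psi|_M$. Then $\psi(w)=1$ and $\|\psi\|\le1$, and since $w$ is a norm-one element of $M$ with $\psi(w)=1$ we get $\|\psi\|=1$; also $\psi$ is normal because restriction of a normal functional to a weak$^*$-closed subspace is weak$^*$-continuous. Finally $\|\phi-\psi\|=\|(\widehat\phi-\widehat\psi)|_M\|\le\|\widehat\phi-\widehat\psi\|<\sqrt{2\delta}$, which is the desired estimate.

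The main obstacle I anticipate is the bookkeeping around the ambient von Neumann algebra: verifying that ``unitary tripotent in the JW$^*$-algebra $M$'' can be upgraded to an honest unitary of a suitable von Neumann algebra containing $M$ so that Lemma~\ref{l norm-one functional close enough to states at a unitary Cstar} applies verbatim, and checking that the normal functional $|\widehat\phi|$ produced there, together with $w^*|\widehat\phi|$, restricts sensibly to $M$. One may worry that $w^*|\widehat\phi|$ involves the \emph{associative} product of $W$ and need not obviously restrict to something intrinsic to $M$; but this is harmless because we only use its restriction as a functional, and the bounds $\|\widehat\psi\|\le1$, $\widehat\psi(w)=1$, $\|\widehat\phi-\widehat\psi\|<\sqrt{2\delta}$ are all inherited under restriction. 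Everything else — positivity of $|\widehat\phi|$, the inequality $\|\widehat\phi-w^*|\widehat\phi|\|\le\sqrt2\,|1-\widehat\phi(w)|^{1/2}$ — is handed to us by Lemma~\ref{l perturbation of functionals} and Lemma~\ref{l norm-one functional close enough to states at a unitary Cstar}.
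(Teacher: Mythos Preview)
Your approach is essentially the same as the paper's: embed $M$ in a von Neumann algebra $W$, extend $\phi$ to a norm-one $\widehat\phi\in W_*$, apply the von Neumann case (Lemma~\ref{l norm-one functional close enough to states at a unitary Cstar}), and restrict back. For the unitary bookkeeping the paper simply observes that the unit $1_M$ of $M$ is a projection in $W$ and replaces $W$ by $1_M W 1_M$; since $w\circ w^*=1_M$ and $w$ is a partial isometry in $W$, this forces $ww^*=w^*w=1_M$, so $w$ is genuinely unitary in that corner. This is cleaner than your suggestion $W_0=w^*Ww$, which is not a $*$-subalgebra in general.

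There is, however, a gap in your extension step. From $M$ being weak$^*$-closed in $W$ you correctly get that $M_*$ is isometrically the quotient $W_*/M_\perp$; this gives normal extensions of $\phi$ of norm \emph{arbitrarily close} to $\|\phi\|$, but not that the infimum is attained. Proximinality of $M_\perp$ in $W_*$ is not automatic from the quotient structure. The paper cites Bunce's theorem \cite{Bun01} precisely for this: it guarantees a norm-preserving extension in $W_*$. Without that reference you would have to run the argument with an extension of norm $1+\epsilon$, normalize, and use the slack in the strict inequality $\phi(w)>1-\delta$ to absorb the error --- which does work, but is not what you wrote.
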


\begin{proof} Let us assume that $M$ is a JW$^*$-subalgebra of a von Neumann algebra $W$.
Let $1$ denote the unit of $M$. Then $1$ is a projection in $W$, thus, up to replacing $W$ by $1W1$, we may assume that $M$ contains the unit of $W$.
\smallskip

We observe that $w$, being a unitary element in $M$, is unitary in $W$.  Let $\tilde{\phi}\in W_*$ be a norm-preserving extension of $\phi$ provided by \cite[Theorem]{Bun01}.
By hypothesis, 
$1-\delta < {\phi} (w)= \tilde{\phi} (w) \leq \|{\phi}\| = \|\tilde{\phi}\|=1$. Now, applying Lemma \ref{l norm-one functional close enough to states at a unitary Cstar} to $W$, $\tilde{\phi}\in W_*$ and the unitary $w$, we find a norm-one functional $\tilde{\psi}\in W_*$ satisfying $\tilde{\psi} (w) =1$ and $\|\tilde{\phi} -\tilde{\psi}\| < \sqrt{2\delta}$. Since $w\in M$ and $1= \tilde{\psi} (w)$, the functional $\psi = \tilde{\psi}|_{M}$ has norm-one, $\psi (w) =1$ and clearly $\|{\phi} -{\psi}\| < \sqrt{2\delta}$.
\end{proof}

\begin{lemma}\label{l ad-hoc 1 Jordan} Let $M$ be a JW$^*$-algebra, let $\phi\in M_*$ and $\delta>0$.
Suppose $a_1, a_2$ are two norm-one elements in $M$ such that $$ \betr{\norm{\phi}-\phi(a_k)}<\delta\norm{\phi}  \hbox{ for  } k=1,2.$$
Then there is a positive functional $\omega\in M_*$ satisfying $\norm{\omega}\le2\sqrt{2\delta}\norm{\phi}$ and
$$\betr{\phi\J xx{a_1-a_2}}\le4\norm{x}_\omega^2 \hbox{ for all } x\in M.$$
\end{lemma}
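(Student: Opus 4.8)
The plan is to reduce the Jordan-algebraic statement to a computation inside the enveloping von Neumann algebra, where the asymmetric objects $a\phi$, $\phi a$, $\betr{\phi}$, $\betr{\phi^*}$ and the perturbation estimates of Lemma~\ref{l perturbation of functionals} are available. First I would fix a von Neumann algebra $W$ containing $M$ as a JW$^*$-subalgebra (with common unit, after compressing by the unit projection if necessary) and a norm-preserving extension $\phi\in W_*$ (abusing notation), using the Bunce extension theorem \cite{Bun01} exactly as in the proof of Lemma~\ref{l norm-one functional close enough to states at a unitary}. For $x\in M$ the element $\J xx{a_k}$ computed in $M$ coincides with $\frac12(x a_k x + \dots)$; more usefully, $\phi\J xx{a_k}$ and $\phi\J xx{a_1-a_2}$ can be bounded once we control $\phi$ on products of the form $x^* b x$ and $x b x^*$ inside $W$. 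The key reduction is that $\J xx{a_1-a_2}$ lies in $M$ and, by the Shirshov--Cohn type identities, is a finite sum of associative products built from $x$, $x^*$ and $a_1-a_2$, so it suffices to estimate $\betr{\phi(x b x^*)}$ and $\betr{\phi(x^* b x)}$ where $b=a_1-a_2$ has norm $\le 2$.

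Second, I would extract from the hypothesis the approximate norm-attainment: since $\betr{\norm{\phi}-\phi(a_k)}<\delta\norm\phi$, the estimates \eqref{glA3_1}--\eqref{glA3_3} give that $a_k\phi$, $\phi a_k$, $a_k^*\betr\phi$, $\phi a_k^*$ are all within $(2\norm\phi)^{1/2}(\delta\norm\phi)^{1/2}=\sqrt{2\delta}\,\norm\phi$ of $\betr\phi$, $\betr{\phi^*}$, $\phi$, $\betr{\phi^*}$ respectively (for $k=1,2$). Consequently $b\phi=a_1\phi-a_2\phi$ has norm $<2\sqrt{2\delta}\,\norm\phi$, and likewise $\phi b$ has norm $<2\sqrt{2\delta}\,\norm\phi$. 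Now I would define the positive functional: take $\omega=[\,b\phi + \phi b\,]$ in the notation introduced before Lemma~\ref{l norm-one functional close enough to states at a unitary Cstar}, or more cleanly $\omega=\frac12\big(\betr{b\phi}+\betr{(b\phi)^*}\big)$ built from the polar decomposition in $W$ and then, via Bunce once more or via the natural conditional-expectation-free restriction, push it down so that it is a positive normal functional on $M$ with $\norm\omega\le 2\sqrt{2\delta}\,\norm\phi$. The precise bookkeeping of the constant $2$ versus $4$ and $\sqrt2$ versus $2\sqrt2$ is where I would have to be careful, but it is routine.

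Third, the Cauchy--Schwarz step: for $x\in M$ write $\phi\big(x b x^*\big)=(b\phi)\big(x^* \cdot x\big)$ type identities — more precisely $\phi(x b x^*)=\phi\big((bx^*)^* \cdot {}\big)$... — and bound $\betr{\phi(x b x^*)}$ by $\big((b\phi)\text{-type positive functional applied to } x x^*\big)^{1/2}$ times a similar factor, using that for a positive functional $\omega$ on a C$^*$-algebra one has $\betr{\eta(x y x^*)}^2\le\norm\eta\,\omega(x x^*)\cdot(\text{stuff})$ only after replacing $b\phi$ by its absolute value $\betr{b\phi}$. This is exactly why $\omega$ must be the symmetrized modulus rather than $b\phi$ itself. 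Combining the four such terms (coming from $x b x^*$, $x^* b x$, and the cross terms in the Jordan product) and using $\norm{x}_\omega^2=\omega(x\circ x^*)$ together with positivity of $x\circ x^*$ in $M$, one gets a bound of the shape $C\,\norm{b\phi+\phi b\text{-modulus}}\,\norm x_\omega^2$; absorbing constants yields $\betr{\phi\J xx{a_1-a_2}}\le 4\norm x_\omega^2$.

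The main obstacle I anticipate is the last step's passage from $W$ back to $M$ while keeping $\omega$ \emph{positive on $M$} and of the stated norm: the modulus $\betr{b\phi}$ is manufactured in $W$ and its restriction to $M$ is positive, but one must check the norm bound survives the restriction and that $\norm{x}_\omega$ on the right is the Jordan prehilbertian seminorm $\omega(x\circ x^*)^{1/2}$ rather than the C$^*$ one — these agree because $x\circ x^*=\frac12(xx^*+x^*x)$ and $\omega$ is positive and normal. Tracking the numerical constant through the four Cauchy--Schwarz terms so that it comes out as exactly $4$ (and the norm of $\omega$ as exactly $2\sqrt{2\delta}\,\norm\phi$) is the only genuinely delicate bookkeeping, and I would organize it by first proving the two one-sided estimates $\betr{\phi(xbx^*)}\le 2\sqrt{2\delta}\norm\phi\,\norm x^2$-type bounds refined to $\omega$, then symmetrizing.
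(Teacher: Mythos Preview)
Your overall framework---extend $\phi$ to a norm-preserving $\tilde\phi\in W_*$ via Bunce, set $b=a_1-a_2$, observe from Lemma~\ref{l perturbation of functionals} that $\norm{b\tilde\phi},\norm{\tilde\phi b}\le 2\sqrt{2\delta}\,\norm{\phi}$, and build a positive $\omega$ from these---is exactly the paper's approach. But you overcomplicate the final step in two ways.

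First, you misidentify the terms to be estimated. Once you are inside the C$^*$-algebra $W$, the triple product is simply $\J xxb=\tfrac12(xx^*b+bx^*x)$; there are no $xbx^*$ or $x^*bx$ terms, no cross terms, and no need for Shirshov--Cohn. Hence
\[
\tilde\phi\J xxb=\tfrac12\big((b\tilde\phi)(xx^*)+(\tilde\phi b)(x^*x)\big),
\]
a single identity, not four terms. Second, the right positivisation is the bracket $[\,\cdot\,]$ defined in the paper (decompose into real and imaginary parts, then into positive and negative parts), not the polar modulus $\betr{\,\cdot\,}$. The point of $[\,\cdot\,]$ is the elementary bound $\betr{\psi(a)}\le 2[\psi](a)$ for \emph{positive} $a$, which applies directly to $a=xx^*$ and $a=x^*x$; no Cauchy--Schwarz is required. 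Setting $\tilde\omega=\tfrac12([b\tilde\phi]+[\tilde\phi b])$ one reads off
\[
\betr{\tilde\phi\J xxb}\le [b\tilde\phi](xx^*)+[\tilde\phi b](x^*x)\le 2\tilde\omega(xx^*+x^*x)=4\tilde\omega(x\circ x^*)=4\norm{x}_{\tilde\omega}^2,
\]
and $\omega=\tilde\omega|_M$ finishes. The constants $4$ and $2\sqrt{2\delta}$ fall out with no bookkeeping at all. Your concern about pushing $\omega$ back to $M$ is unfounded: restriction of a positive normal functional is positive and normal, and restriction cannot increase the norm.
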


\begin{proof} Similarly as in the  proof of Lemma \ref{l norm-one functional close enough to states at a unitary}  we may assume that $M$ is a JW$^*$-subalgebra of a  von Neumann algebra $W$ containing the unit of $W$. \smallskip

Let $\tilde{\phi}\in W_*$ be a norm-preserving normal extension of $\phi$ (see \cite[Theorem]{Bun01}). Working in $W_*$ we set $\tilde\psi_l=a_1\tilde\phi-a_2\tilde\phi$ and $\tilde\psi_r=\tilde\phi a_1-\tilde\phi a_2$.
By \eqref{glA3_2} of Lemma \ref{l perturbation of functionals}
we have $\norm{\betr{\tilde\phi}-a_k\tilde\phi}\le\sqrt{2\delta}\norm{\tilde\phi}$ ($k=1,2$) hence $\norm{\tilde\psi_l}\le2\sqrt{2\delta}\norm{\tilde\phi}$.
Likewise we get $\norm{\tilde\psi_r}\le2\sqrt{2\delta}\norm{\tilde\phi}$ with \eqref{glA3_3} of Lemma \ref{l perturbation of functionals}.
Set $\tilde\omega=([\tilde\psi_l]+[\tilde\psi_r])/2$.
Then $\norm{\tilde\omega}\le2\sqrt{2\delta}\norm{\tilde\phi}$ and
$$\begin{aligned}
\betr{\tilde\phi\J xx{a_1-a_2}}&=\frac12\betr{\tilde\psi_l(xx^*)+\tilde\psi_r(x^*x)}\le[\tilde\psi_l](xx^*)+[\tilde\psi_r](x^*x)\\
&\le([\tilde\psi_l]+[\tilde\psi_r])(xx^*+x^*x)=4\tilde\omega(\J xx1)=4\norm{x}_{\tilde\omega}^2.
\end{aligned}$$
It remains to set $\omega=\tilde\omega|_M$.
\end{proof}

\begin{lemma}\label{l ad-hoc 2}
Let $M$ be a JW$^*$-algebra, $\phi\in M_*$ and let $a$ be a norm-one element of $M$.
Then there is a positive functional $\omega\in M_*$ such that
$$\norm{\omega}\le\norm{\phi}\quad\mbox{ and }\quad\forall x\in W:\betr{\phi\J xxa}\le4\norm{x}_\omega^2.$$
\end{lemma}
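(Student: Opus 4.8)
The plan is to mimic the proof of Lemma~\ref{l ad-hoc 1 Jordan}, but with a single element $a$ instead of a difference $a_1-a_2$, and without needing the perturbation estimates from Lemma~\ref{l perturbation of functionals}. The point is that here $a$ is merely a norm-one element of $M$ (we do \emph{not} assume $\phi$ nearly attains its norm at $a$), so the gain of a small factor $\sqrt{2\delta}$ is no longer available; instead the constant $4$ on the right-hand side absorbs the general situation. As in the earlier lemmata, first reduce to the case in which $M$ is a JW$^*$-subalgebra of a von Neumann algebra $W$ containing the unit of $W$, and lift $\phi$ to a norm-preserving normal functional $\tilde\phi\in W_*$ via \cite[Theorem]{Bun01}.

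Next I would work inside $W_*$. Write the Jordan triple product as $\{x,x,a\}=\frac12(xx^*a+axx^*)$ in the C$^*$-sense (this is the expression of the JB$^*$-triple product coming from the associative structure of $W$, since a JW$^*$-subalgebra is closed under the triple product inherited from $W$). Hence
$$\tilde\phi\{x,x,a\}=\tfrac12\bigl((a\tilde\phi)(xx^*)+(\tilde\phi a)(x^*x)\bigr),$$
using the notation $a\tilde\phi(y)=\tilde\phi(ya)$ and $\tilde\phi a(y)=\tilde\phi(ay)$ introduced before Lemma~\ref{l perturbation of functionals}. Now set $\tilde\psi_l=a\tilde\phi$ and $\tilde\psi_r=\tilde\phi a$; since $\|a\|=1$ we have $\|\tilde\psi_l\|,\|\tilde\psi_r\|\le\|\tilde\phi\|=\|\phi\|$. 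Put $\tilde\omega=\tfrac12([\tilde\psi_l]+[\tilde\psi_r])$, where $[\cdot]$ is the positive functional associated to an arbitrary normal functional as recalled in the excerpt; then $[\tilde\psi_l],[\tilde\psi_r]$ are positive with norm at most $\|\phi\|$, so $\tilde\omega\ge0$ and $\|\tilde\omega\|\le\|\phi\|$. Using $|\tilde\phi(y)|\le|\tilde\psi(y)|$ trivially and the key property $\betr{\tilde\psi(b)}\le2[\tilde\psi](b)$ for positive $b$, together with positivity of $xx^*$ and $x^*x$, we estimate
$$\betr{\tilde\phi\{x,x,a\}}\le\tfrac12\bigl([\tilde\psi_l](xx^*)+[\tilde\psi_r](x^*x)\bigr)\cdot 2\le([\tilde\psi_l]+[\tilde\psi_r])(xx^*+x^*x)=4\,\tilde\omega(\{x,x,1\})=4\norm{x}_{\tilde\omega}^2.$$
Finally restrict: set $\omega=\tilde\omega|_M$, which is still positive, has $\|\omega\|\le\|\phi\|$, and (since $\{x,x,1\}$ computed in $M$ equals $x\circ x^*$, which coincides with the $W$-computation for $x\in M$) satisfies $\|x\|_\omega^2=\omega(x\circ x^*)=\tilde\omega(\{x,x,1\})$ for all $x\in M$. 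This yields the desired inequality.

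The only place requiring care — and the main obstacle — is bookkeeping of the constant: one must check that the two applications of $\betr{\tilde\psi(b)}\le2[\tilde\psi](b)$ combine with the factor $\tfrac12$ in front of the triple product to give exactly $4$ and not something larger, and that one genuinely does not need $\tilde\psi_l,\tilde\psi_r$ to be close to positive functionals (which is why no smallness hypothesis on $\delta$ appears, in contrast with Lemma~\ref{l ad-hoc 1 Jordan}). I would also double-check that $[\,\cdot\,]$ is well-defined for a \emph{normal} functional, i.e. that the Jordan decomposition $\tilde\phi=\phi_1-\phi_2+i(\phi_3-\phi_4)$ from \cite[Theorem III.4.2]{Tak} keeps all four pieces in $W_*$; this is standard but worth a remark. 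Everything else is the same reduction-and-restriction pattern already used twice in the section, so the proof should be short.
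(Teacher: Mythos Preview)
Your proposal is correct and follows essentially the same route as the paper's own proof: extend $\phi$ normally to $\tilde\phi\in W_*$, set $\tilde\psi_l=a\tilde\phi$, $\tilde\psi_r=\tilde\phi a$, $\tilde\omega=\tfrac12([\tilde\psi_l]+[\tilde\psi_r])$, estimate via $|\tilde\psi(b)|\le2[\tilde\psi](b)$ for $b\ge0$, and restrict back to $M$. The only slip is a typo in your displayed triple product, which should read $\{x,x,a\}=\tfrac12(xx^*a+ax^*x)$ rather than $\tfrac12(xx^*a+axx^*)$; your subsequent computation already uses the correct form.
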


\begin{proof}
The proof resembles the preceding one of Lemma \ref{l ad-hoc 1 Jordan}. Assume that $M$ is a JW$^*$-subalgebra of a von Neumann algebra $W$ and $1_W\in M$. Let $\tilde\phi\in W_*$ be a norm-preserving extension of $\phi$ (see \cite[Theorem]{Bun01}). Set $\tilde\psi_l=a\tilde\phi$ and $\tilde\psi_r=\tilde\phi a$.
Then $\norm{\tilde\psi_l}\le\norm{a}\norm{\tilde\phi}=\norm{\tilde\phi}$ and, similarly, $\norm{\tilde\psi_r}\le\norm{\tilde\phi}$.
Set $\tilde\omega=([\tilde\psi_l]+[\tilde\psi_r])/2$. Then $\norm{\tilde\omega}\le\norm{\tilde\phi}$ and
$$\begin{aligned}
\betr{\tilde\phi\J xxa}&=\frac12\betr{\tilde\psi_l(xx^*)+\tilde\psi_r(x^*x)}\le[\tilde\psi_l](xx^*)+[\tilde\psi_r](x^*x)\\
&\le([\tilde\psi_l]+[\tilde\psi_r])(xx^*+x^*x)=4\tilde\omega(\J xx1)=4\norm{x}_{\tilde\omega}^2.
\end{aligned}$$
Finally, we may set $\omega=\tilde{\omega}|_M$.
\end{proof}

\begin{proof}[Proof of Proposition~\ref{P:key decomposition}]
It follows from \cite[Theorem 7.1]{Finite} that any JBW$^*$-algebra $M$ can be represented by $M_1\oplus^{\ell_\infty} M_2$ where $M_1$ is a finite JBW$^*$-algebra and $M_2$ is a JW$^*$-algebra. 
The validity of Proposition~\ref{P:key decomposition} for finite JBW$^*$-algebras follows immediately from Observation~\ref{obs:finite JBW* algebras}. 
Since the validity of Proposition~\ref{P:key decomposition} is clearly preserved by $\ell_\infty$-sums, it remains to prove it for JW$^*$-algebras.

So, assume that $M$ is a JW$^*$-algebra and $\varphi\in M_*$.
By homogeneity we may assume $\norm{\varphi}=1$. Fix $\varepsilon>0$. Choose $\delta>0$ such that $12\sqrt{2\delta}<\varepsilon$. 
By the Wright-Youngson extension of the Russo-Dye theorem, the convex hull of all unitary elements in $M$ is norm dense in the closed unit ball of $M$ (see \cite[Theorem 2.3]{WrightYoungson77} or \cite[Fact 4.2.39]{Cabrera-Rodriguez-vol1}). We can therefore find a unitary element $w$ such that $\varphi (w) > 1-\delta$. By Lemma~\ref{l norm-one functional close enough to states at a unitary} there exists a norm-one  functional $\psi\in M_*$ satisfying $\psi(w)=1$ 
and $\|\varphi -\psi\| < \sqrt{2\delta}.$
Set $u=s(\varphi)$.\smallskip

For $x\in M$ we then have
$$\begin{aligned}
 \norm{x}_\varphi^2 &= \varphi \J xxu = \psi \J xxw
 +(\varphi-\psi)\J xxw + \varphi \J xx{u-w}.
\end{aligned}$$

Applying Lemma~\ref{l ad-hoc 2} to $\varphi-\psi$ and $w$ we find a positive functional $\omega_1\in M_*$ with $\norm{\omega_1}\le\norm{\varphi-\psi}<\sqrt{2\delta}$ such that
$$\abs{(\varphi-\psi)\J xxw}\le 4\norm{x}_{\omega_1}^2\mbox{ for }x\in M.$$

Applying Lemma~\ref{l ad-hoc 1 Jordan} to the functional $\varphi$ and the pair $w,u\in M$ we get a positive functional $\omega_2\in M_*$ with $\norm{\omega_2}\le2\sqrt{2\delta}$ such that
$$\abs{\varphi \J xx{u-w}}\le 4\norm{x}_{\omega_2}^2\mbox{ for }x\in M.$$

Hence we have for each $x\in M$
$$\norm{x}_\varphi^2\le \norm{x}_\psi^2+4(\norm{x}_{\omega_1}^2+\norm{x}_{\omega_2}^2)=\norm{x}_\psi^2+\norm{x}_{4(\omega_1+\omega_2)}^2,$$
where we used that $\omega_1$ and $\omega_2$ are positive functionals. Since $s(\psi)\le w$ (just have in mind that $\psi(w)=1$ and \eqref{eq minimality of the support tripotent}), $w$ is unitary and 
$$\norm{4(\omega_1+\omega_2)}<12\sqrt{2\delta},$$
it is enough to set $\varphi_1=\psi$ and $\varphi_2=4(\omega_1+\omega_2)$.
\end{proof}

\begin{remark}\label{Rem}
(1) Note that by \cite[Proposition 7.5]{Finite} any finite tripotent in a JBW$^*$-algebra is majorized by a unitary element, hence Proposition~\ref{P:type I approx} is indeed a stronger version of Proposition~\ref{P:key decomposition} in the special case in which the JBW$^*$-algebra $M$ is a direct sum of a finite JBW$^*$-algebra and a type $I$ JBW$^*$-algebra. (For (\ref{it:key decomposition i}) and (\ref{it:key decomposition iv}) of Proposition~\ref{P:key decomposition} see the remarks before the statement of Proposition~\ref{P:type I approx}.)
Further, as will be seen at the beginning of the next section, Proposition~\ref{P:key decomposition} is the main ingredient for proving Proposition~\ref{P:key decomposition alternative}.

(2) There is an alternative way of proving Proposition~\ref{P:key decomposition}.  It follows from \cite[Theorem 7.1]{Finite} that any JBW$^*$-algebra $M$ can be represented by $M_1\oplus^{\ell_\infty} M_2\oplus^{\ell^\infty} M_3$ where $M_1$ is a finite JBW$^*$-algebra, $M_2$ is a type I JBW$^*$-algebra and $M_3$ is a von Neumann algebra. So, we can conclude using Proposition~\ref{P:type I approx} and giving the above argument only for von Neumann algebras (which is slightly easier).
\end{remark}

\section{Proofs of the main results}\label{sec:proofs}

We start by proving Proposition~\ref{P:key decomposition alternative}.

\begin{proof}[Proof of Proposition~\ref{P:key decomposition alternative}.]
Let $M$ be a JBW$^*$-algebra, $\varphi\in M_*$ and $\varepsilon>0$. By homogeneity we may assume that $\norm{\varphi}=1$. Let $\varphi_1,\varphi_2$ and $w$ correspond to $\varphi$ and $\frac\varepsilon2$ by Proposition~\ref{P:key decomposition}. 
Since $w$ is unitary, we have $M_2(w)=M$, hence we may apply Lemma~\ref{L:rotation} to get $\psi_2\in M_*$ such that
$$s(\psi_{2})\le w, \ \norm{\psi_{2}}\le\norm{\varphi_{2}},\ \norm{\cdot}_{\varphi_{2
}}\le\sqrt{2}\norm{\cdot}_{\psi_{2}}.
$$
Then
$$\begin{aligned}
 \norm{\cdot}_\varphi^2&\le\norm{\cdot}_{\varphi_1}^2+\norm{\cdot}_{\varphi_2}^2\le
  \norm{\cdot}_{\varphi_{1}}^2+2\norm{\cdot}_{\psi_2}^2=\norm{\cdot}_{\varphi_{1}+2\psi_2}^2
 =(\norm{\varphi_{1}}+2\norm{\psi_2})\norm{\cdot}_\psi^2,
 \end{aligned}
$$
where $$\psi=\frac{\varphi_{1}+2\psi_2}{\norm{\varphi_{1}}+2\norm{\psi_2}}.$$
(Note that the first equality follows from the fact that the support tripotents of both functionals are below $w$.)
Since  the functionals $\varphi_{1}$ and $\psi_2$ attain their norms at $w$, we deduce that $\norm{\psi}=1$. It remains to observe
that
$$\norm{\varphi_{1}}+2\norm{\psi_2}\le \norm{\varphi}+2\norm{\varphi_2}\le1+\varepsilon.$$
This completes the proof.
\end{proof}

Having proved Proposition~\ref{P:key decomposition alternative}, we know that Proposition~\ref{P:majorize 1+2+epsilon} is valid as well. Using it and Theorem~\ref{T:triples-dual} we get the following theorem.

\begin{thm}\label{T:JBW*-algebras}
Let $M$ be a JBW$^*$-algebra, let $H$ be a Hilbert space and let $T:M\to H$ be a weak$^*$-to-weak continuous linear operator. Given $\varepsilon>0$, there is a norm-one functional $\varphi\in M_*$ such that
$$\norm{Tx}\le(\sqrt2+\varepsilon)\norm{T}\norm{x}_\varphi\mbox{ for }x\in M.$$
\end{thm}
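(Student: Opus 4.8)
The plan is to derive Theorem~\ref{T:JBW*-algebras} by combining the dual variational statement Theorem~\ref{T:triples-dual} with the sharpened majorization Proposition~\ref{P:majorize 1+2+epsilon}, exactly in the spirit of the deduction of Theorem~\ref{T:triples}$(3)$ from Proposition~\ref{P:2-1BF} in \cite{HKPP-BF}, but now exploiting that in the algebra case the constant $2$ in Proposition~\ref{P:2-1BF} can be replaced by the asymmetric $\norm{\varphi_1}+2\norm{\varphi_2}+\varepsilon$. Since $M$ is a JBW$^*$-algebra it is in particular a JBW$^*$-triple, and $T$ is weak$^*$-to-weak continuous, so Theorem~\ref{T:triples-dual} applies directly.

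First I would fix $\varepsilon>0$ and choose an auxiliary parameter $\eta>0$ (to be specified at the end) such that the final constant is at most $\sqrt2+\varepsilon$; concretely one wants $(\sqrt2+\eta)\sqrt{1+2\eta}\le\sqrt2+\varepsilon$, which holds for $\eta$ small. Apply Theorem~\ref{T:triples-dual} with $\eta$ in place of $\varepsilon$ to obtain norm-one functionals $\varphi_1,\varphi_2\in M_*$ with
$$\norm{Tx}\le(\sqrt2+\eta)\norm{T}\left(\norm{x}^2_{\varphi_1}+\eta\norm{x}^2_{\varphi_2}\right)^{\frac12}\quad\text{ for }x\in M.$$
Now I would feed the pair $\varphi_1$ and $\eta\varphi_2$ (note $\norm{\eta\varphi_2}=\eta$) into Proposition~\ref{P:majorize 1+2+epsilon}, with its $\varepsilon$ chosen to be, say, $\eta$ as well. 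This yields a norm-one functional $\psi\in M_*$ such that
$$\norm{x}^2_{\varphi_1}+\norm{x}^2_{\eta\varphi_2}\le\left(\norm{\varphi_1}+2\norm{\eta\varphi_2}+\eta\right)\norm{x}^2_\psi=(1+2\eta+\eta)\norm{x}^2_\psi\quad\text{ for }x\in M,$$
using $\norm{\varphi_1}=1$. Since $\norm{x}^2_{\eta\varphi_2}=\eta\norm{x}^2_{\varphi_2}$ (the seminorm is linear in the functional), the bracket on the right of the Theorem~\ref{T:triples-dual} inequality is exactly $\norm{x}^2_{\varphi_1}+\eta\norm{x}^2_{\varphi_2}\le(1+3\eta)\norm{x}^2_\psi$, hence
$$\norm{Tx}\le(\sqrt2+\eta)\sqrt{1+3\eta}\,\norm{T}\norm{x}_\psi\quad\text{ for }x\in M.$$
It remains to note that $(\sqrt2+\eta)\sqrt{1+3\eta}\to\sqrt2$ as $\eta\to0^+$, so for $\eta$ chosen small enough at the outset this constant is at most $\sqrt2+\varepsilon$, and $\psi$ is the desired norm-one functional.

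There is essentially no obstacle here: the only points requiring a word of care are the bookkeeping of the several small parameters (making sure a single $\eta$ can be chosen up front so that the composite constant lands below $\sqrt2+\varepsilon$) and the elementary identity $\norm{x}_{t\varphi}^2=t\norm{x}_\varphi^2$ for $t>0$, which is immediate from the definition $\norm{x}_\varphi^2=\varphi\{x,x,s(\varphi)\}$ together with $s(t\varphi)=s(\varphi)$. All the real work has already been done in Proposition~\ref{P:majorize 1+2+epsilon} (and, underneath it, in Proposition~\ref{P:key decomposition alternative} and the type~I analysis), so this theorem is a short synthesis step. If one prefers to avoid introducing $\eta$ twice, an equivalent and perhaps cleaner route is: apply Theorem~\ref{T:triples-dual} with a fixed small $\varepsilon'$, observe the right-hand bracket is dominated by $\norm{x}^2_{\varphi_1}+\norm{x}^2_{\varphi_2}$ after absorbing the $\varepsilon'$ factor appropriately, then apply Proposition~\ref{P:majorize 1+2+epsilon} to $\varphi_1$ and $\varepsilon'\varphi_2$; either way the final constant is $\sqrt2$ up to an arbitrarily small error.
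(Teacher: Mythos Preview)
Your proposal is correct and is exactly the synthesis the paper intends: the paper's own proof of Theorem~\ref{T:JBW*-algebras} is the single line ``using Proposition~\ref{P:majorize 1+2+epsilon} and Theorem~\ref{T:triples-dual},'' and your write-up simply makes explicit the routine parameter bookkeeping (modulo the harmless slip that the initial target $(\sqrt2+\eta)\sqrt{1+2\eta}$ should read $(\sqrt2+\eta)\sqrt{1+3\eta}$ to match your later computation).
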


Now we get the main result by the standard dualization.

\begin{proof}[Proof of Theorem~\ref{t constant >sqrt2 in LG for JBstar algebras}]
Let $T:B\to H$ be a bounded linear operator from a JB$^*$-algebra into a Hilbert space. Let $\varepsilon>0$. Since Hilbert spaces are reflexive, the second adjoint operator $T^{**}$ maps $B^{**}$ into $H$ and it is weak$^*$-to-weak continuous. Further, $B^{**}$ is a JBW$^*$-algebra (cf. \cite[Theorem 4.4.3]{hanche1984jordan} and \cite{Wright1977} or \cite[Proposition 5.7.10]{Cabrera-Rodriguez-vol2} and \cite[Theorems 4.1.45 and 4.1.55]{Cabrera-Rodriguez-vol1}), so Theorem~\ref{T:JBW*-algebras} provides the respective functional $\varphi\in (B^{**})_*=B^*$.
\end{proof}

We further note that for JB$^*$-algebras we have two different forms of the Little Grothendieck theorem -- a triple version (the just proved Theorem~\ref{t constant >sqrt2 in LG for JBstar algebras}) and an algebraic version (an analogue of Theorem~\ref{T:C*alg-sym}). The difference is that the first form provides just a norm-one functional while the second one provides a state, i.e., a positive norm-one functional. Let us now show that the algebraic version may be proved from the triple version.

\begin{thm}\label{T:algebraic version dual}
Let $M$ be a JBW$^*$-algebra, let $H$ be a Hilbert space and let $T:M\to H$ be a weak$^*$-to-weak continuous linear operator. Given $\varepsilon>0$, there is a state $\varphi\in M_*$ such that
$$\norm{Tx}\le(2+\varepsilon)\norm{T}\varphi(x\circ x^*)^{1/2}\mbox{ for }x\in M.$$
\end{thm}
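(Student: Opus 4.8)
The plan is to deduce this from Theorem~\ref{T:JBW*-algebras} by a single application of Lemma~\ref{L:rotation} \emph{at the unit}, which converts the norm-one functional produced there into a state at the cost of an extra factor $\sqrt2$ in the seminorm; since $\sqrt2\cdot\sqrt2=2$, this lands on exactly the constant $2+\varepsilon$. First I would fix $\varepsilon'>0$ with $\sqrt2\,\varepsilon'\le\varepsilon$ and invoke Theorem~\ref{T:JBW*-algebras} to obtain a norm-one functional $\psi\in M_*$ with $\norm{Tx}\le(\sqrt2+\varepsilon')\norm{T}\,\norm{x}_\psi$ for all $x\in M$.

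Next, since $M$ is a unital JBW$^*$-algebra, its unit $1$ is a unitary tripotent, so $M_2(1)=M$ and in particular $s(\psi)\in M_2(1)$. Thus Lemma~\ref{L:rotation} applies with $p=1$ and yields $\tilde\psi\in M_*$ with $\norm{\tilde\psi}=\norm{\psi}=1$, $s(\tilde\psi)\le 1$ and $\norm{x}_\psi\le\sqrt2\,\norm{x}_{\tilde\psi}$ for all $x\in M$. The point is that $s(\tilde\psi)\le 1$ forces $\tilde\psi$ to be a state: indeed $1-s(\tilde\psi)$ is a tripotent orthogonal to $s(\tilde\psi)$, hence $1-s(\tilde\psi)\in M_0(s(\tilde\psi))$, so $P_2(s(\tilde\psi))(1)=s(\tilde\psi)$; using $\tilde\psi=\tilde\psi\circ P_2(s(\tilde\psi))$ we get $\tilde\psi(1)=\tilde\psi(s(\tilde\psi))=\norm{\tilde\psi}=1$, and a norm-one functional on a unital JBW$^*$-algebra attaining its norm at $1$ is positive. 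Since a state attains its norm at the unit, the unit may serve as the norming element in the definition of the prehilbertian seminorm, so $\norm{x}_{\tilde\psi}^2=\tilde\psi\{x,x,1\}=\tilde\psi(x\circ x^*)$ for all $x\in M$.

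Combining the two displays, for every $x\in M$
$$\norm{Tx}\le(\sqrt2+\varepsilon')\norm{T}\,\norm{x}_\psi\le(\sqrt2+\varepsilon')\sqrt2\,\norm{T}\,\norm{x}_{\tilde\psi}=(2+\sqrt2\,\varepsilon')\norm{T}\,\tilde\psi(x\circ x^*)^{1/2}\le(2+\varepsilon)\norm{T}\,\tilde\psi(x\circ x^*)^{1/2},$$
so $\varphi:=\tilde\psi$ is the required state.

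I do not expect a genuine obstacle here: once Theorem~\ref{T:JBW*-algebras} and Lemma~\ref{L:rotation} are available, the only idea needed is that Lemma~\ref{L:rotation} can be used at $p=1$, which simultaneously makes the support tripotent a projection (hence the functional positive) and trades the optimal triple constant $\sqrt2$ for the algebraic constant $2$ — mirroring the gap between assertions $(1)$ and $(3)$ of Theorem~\ref{T:triples}. The only care required is the bookkeeping of the $\varepsilon$'s and the verification that the triple seminorm $\norm{\cdot}_{\tilde\psi}$ coincides with $\tilde\psi(\,\cdot\circ(\,\cdot)^*)^{1/2}$, which is exactly the content of the computation in the previous paragraph.
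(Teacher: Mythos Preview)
Your proposal is correct and follows essentially the same approach as the paper: apply Theorem~\ref{T:JBW*-algebras} with a suitably scaled $\varepsilon$, then use Lemma~\ref{L:rotation} at the unit $p=1$ to convert the norm-one functional into a state at the cost of the extra factor $\sqrt2$, and finally identify $\norm{x}_{\tilde\psi}^2$ with $\tilde\psi(x\circ x^*)$. Your write-up simply spells out in more detail why $s(\tilde\psi)\le1$ forces $\tilde\psi(1)=1$ (hence positivity), which the paper abbreviates to the parenthetical ``note that $\varphi(1)=1$''.
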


\begin{proof}
By Theorem~\ref{T:JBW*-algebras} there is a norm-one functional $\psi\in M_*$ such that
$$\norm{Tx}\le (\sqrt2+\frac{\varepsilon}{\sqrt2})\norm{T}\norm{x}_\psi\mbox{ for }x\in M.$$
Since $M$ is unital and $M_2(1)=M$, Lemma~\ref{L:rotation} yields a norm-one functional $\varphi\in M_*$ with $s(\varphi)\le1$ and $\norm{\cdot}_\psi\le\sqrt2\norm{\cdot}_\varphi$. Then
$\varphi$ is a state (note that $\varphi(1)=1$) and
$$\norm{Tx}\le(2+\varepsilon)\norm{T}\norm{x}_\varphi\mbox{ for }x\in M.$$
It remains to observe that 
$$\norm{x}_\varphi=\sqrt{\varphi\J xx1}=\sqrt{\varphi(x\circ x^*)}$$
for $x\in M$.
\end{proof}

\begin{thm}\label{T:algebraic version non-dual}
Let $B$ be a JB$^*$-algebra, let $H$ be a Hilbert space and let $T:B\to H$ be a bounded linear operator.  Then there is a state $\varphi\in B^*$ such that
$$\norm{Tx}\le2\norm{T}\varphi(x\circ x^*)^{1/2}\mbox{ for }x\in B.$$
\end{thm}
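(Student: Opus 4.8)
The plan is to deduce this from its ``dual'' counterpart, Theorem~\ref{T:algebraic version dual}, by the standard bidual trick, and then to remove the $\varepsilon$ by a weak$^*$-compactness argument. If $T=0$ there is nothing to prove, so assume $T\neq 0$. First I would pass to the second adjoint: since $H$ is reflexive, $T^{**}$ maps $B^{**}$ into $H$ and is weak$^*$-to-weak continuous, with $\norm{T^{**}}=\norm T$ and $T^{**}x=Tx$ for $x\in B$; moreover $B^{**}$ is a JBW$^*$-algebra whose predual is $B^*$, i.e.\ $(B^{**})_*=B^*$ (exactly as in the proof of Theorem~\ref{t constant >sqrt2 in LG for JBstar algebras}).

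Applying Theorem~\ref{T:algebraic version dual} to $M=B^{**}$, to $T^{**}$ and to $\varepsilon=\frac1n$, for each $n\in\en$ we obtain a state $\varphi_n\in(B^{**})_*=B^*$ with
$$\norm{Tx}\le\Bigl(2+\tfrac1n\Bigr)\norm T\,\varphi_n(x\circ x^*)^{1/2}\qquad\text{for }x\in B,$$
using that $x\circ x^*$ is the same whether computed in $B$ or in $B^{**}$ and that $\varphi_n$ is positive with $\norm{\varphi_n}\le 1$. The sequence $(\varphi_n)$ lies in the closed unit ball of $B^*$, which is $\sigma(B^*,B)$-compact, so it has a $\sigma(B^*,B)$-cluster point $\varphi\in B^*$, with $\norm\varphi\le 1$. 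Moreover, for each $a\ge 0$ in $B$ the value $\varphi(a)$ is a cluster point of the sequence $(\varphi_n(a))_n\subseteq[0,\infty)$, hence $\varphi\ge 0$ on $B$.

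Next I would pass to the limit in the inequality. Fix $x\in B$ and $\eta>0$; the set $U=\{\psi\in B^*\setsep\abs{\psi(x\circ x^*)-\varphi(x\circ x^*)}<\eta\}$ is a $\sigma(B^*,B)$-neighbourhood of $\varphi$, so there is $n>\frac1\eta$ with $\varphi_n\in U$, and therefore
$$\norm{Tx}\le\Bigl(2+\tfrac1n\Bigr)\norm T\,\varphi_n(x\circ x^*)^{1/2}<(2+\eta)\norm T\,\bigl(\varphi(x\circ x^*)+\eta\bigr)^{1/2}.$$
Letting $\eta\to 0^+$ gives $\norm{Tx}\le 2\norm T\,\varphi(x\circ x^*)^{1/2}$ for all $x\in B$.

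It remains to turn $\varphi$ into a state. Since $T\neq 0$, pick $x_0\in B$ with $Tx_0\neq 0$; the previous inequality forces $\varphi(x_0\circ x_0^*)>0$, so $c:=\norm\varphi\in(0,1]$. Then $\psi:=\varphi/c$ is a state on $B$, and since $\sqrt c\le 1$,
$$\norm{Tx}\le 2\norm T\,\varphi(x\circ x^*)^{1/2}=2\sqrt c\,\norm T\,\psi(x\circ x^*)^{1/2}\le 2\norm T\,\psi(x\circ x^*)^{1/2}\qquad\text{for }x\in B,$$
which is the assertion. The only genuinely delicate point is this renormalization step: for non-unital $B$ the state space is not weak$^*$-compact, so a cluster point of states need only have norm $\le 1$; but the homogeneity of the inequality lets us rescale $\varphi$ at no cost, which is exactly why the constant remains precisely $2$.
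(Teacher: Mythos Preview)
Your proof is correct and follows essentially the same approach as the paper: pass to $T^{**}:B^{**}\to H$, apply Theorem~\ref{T:algebraic version dual} with $\varepsilon=\tfrac1n$, take a weak$^*$-cluster point of the resulting states, and then renormalize to obtain a genuine state. The only cosmetic difference is that the paper disposes of the degenerate case at the end (observing that $\tilde\varphi=0$ forces $T=0$, whence any state works), whereas you exclude $T=0$ at the outset.
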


\begin{proof}
Since $B^{**}$ is a JBW$^*$-algebra, $T^{**}$ maps $B^{**}$ into $H$ and $T^{**}$ is weak$^*$-to-weak continuous, by Theorem~\ref{T:algebraic version dual} we get a sequence $(\varphi_n)$ of states on $B$ such that
$$\norm{Tx}\le(2+\frac1n)\norm{T}\varphi_n(x\circ x^*)^{1/2}\mbox{ for }x\in B\mbox{ and } n\in\en.$$
Let $\tilde\varphi$ be a weak$^*$-cluster point of the sequence $(\varphi_n)$. Then $\tilde\varphi$ is positive, $\norm{\tilde\varphi}\le 1$
and 
$$\norm{Tx}\le2\norm{T}\tilde\varphi(x\circ x^*)^{1/2}\mbox{ for }x\in B.$$
Now we can clearly replace $\tilde\varphi$ by a state. Indeed, if $\tilde\varphi\ne0$, we take $\varphi=\frac{\tilde\varphi}{\norm{\tilde\varphi}}$. If $\tilde\varphi=0$, then $T=0$ and hence $\varphi$ may be any state. (Note that in case $B$ is unital, $\tilde\varphi$ is already a state.)
\end{proof}

We finish this section by showing that our main result easily implies Theorem~\ref{T-C*alg}.

\begin{proof}[Proof of Theorem~\ref{T-C*alg} from Theorem~\ref{t constant >sqrt2 in LG for JBstar algebras}] Let $A$ be a C$^*$-algebra, let $H$ be a Hilbert space and let $T:A\to H$ be a bounded linear operator. By Theorem~\ref{t constant >sqrt2 in LG for JBstar algebras} there is a sequence $(\psi_n)$ of norm-one functionals in $A^*$ such that
$$\norm{Tx}\le (\sqrt{2}+\frac1n)\norm{T}\norm{x}_{\psi_n}\mbox{ for }x\in A \mbox{ and }n\in\en.$$
Recall that $A^{**}$ is a von Neumann algebra. Set $u_n=s(\psi_n)\in A^{**}$. Then 
$$\norm{x}_{\psi_n}^2=\psi_n\J xx{u_n}=\frac12(\psi_n(xx^*u_n)+\psi_n(u_nx^*x))=\frac12(u_n\psi_n(xx^*)+\psi_n u_n(x^*x))
$$
for $x\in A$. Moreover,  $\varphi_{1,n}=u_n\psi_n$ and
$\varphi_{2,n}=\psi_n u_n$
are states on $A$ (note that $\varphi_{1,n}=\abs{\psi_n}$ and $\varphi_{2,n}=\abs{\psi_n^*}$) such that
$${\norm{Tx}\le (\sqrt{2}+\frac1n)\norm{T}\cdot\frac1{\sqrt{2}}(\varphi_{1,n}(xx^*)+\varphi_{2,n}(x^*x))^{1/2}\mbox{ for }x\in A \mbox{ and }n\in\en.}$$
Let $(\varphi_1,\varphi_2)$ be a weak$^*$-cluster point of the sequence $((\varphi_{1,n},\varphi_{2,n}))_n$ in $B_{A^*}\times B_{A^*}$.
Then $\varphi_1,\varphi_2$ are positive functionals of norm at most one such that
$$\norm{Tx}\le \|T\| (\varphi_{1}(xx^*)+\varphi_{2}(x^*x))^{1/2}\mbox{ for }x\in A.$$
Similarly as above we may replace $\varphi_1$ and $\varphi_2$ by states.
\end{proof}

\section{Examples and problems}\label{sec:problems}

\begin{ques}
Do Theorem~\ref{t constant >sqrt2 in LG for JBstar algebras} and Theorem~\ref{T:JBW*-algebras} hold with the constant $\sqrt2$ instead of $\sqrt2+\varepsilon$?
\end{ques}

We remark that these theorems do not hold with a constant strictly smaller than $\sqrt{2}$. Indeed, assume that Theorem~\ref{t constant >sqrt2 in LG for JBstar algebras} holds with a constant $K$. Then Theorem~\ref{T-C*alg} holds with constant $\frac{K}{\sqrt{2}}$ (see the proof of the relationship of these two theorems in Section~\ref{sec:proofs}). But the best constant for Theorem~\ref{T-C*alg} is $1$ due to \cite{haagerup-itoh}.

Since the example in \cite{haagerup-itoh} uses a rather involved combinatorial construction, we provide an easier example showing that the constant in Theorem~\ref{t constant >sqrt2 in LG for JBstar algebras} has to be at least $\sqrt{2}$.

\begin{example2}\label{ex:Tx=xxi} Let $H$ be an infinite-dimensional Hilbert space. Let $A=K(H)$ be the C$^*$-algebra of compact operators.
Fix an arbitrary unit vector $\xi\in H$ and define $T:A\to H$ by $Tx=x\xi$ for $x\in A$. It is clear that $\norm{T}=\norm{\xi}=1$. Fix an arbitrary norm-one functional $\varphi\in A^*$. We are going to prove that
\begin{equation}
\sup \left\{\frac{\norm{Tx}}{\norm{T} \norm{x}_\varphi}\setsep x\in A, \norm{x}_\varphi\neq0\right\}\ge\sqrt{2}.\label{eq Example} 
\end{equation}

Recall that $K(H)^*$ is identified with $N(H)$, the space of nuclear operators on $H$ equipped with the nuclear norm, and $K(H)^{**}$ is identified with $B(H)$, the von Neumann algebra of all bounded linear operators on $H$.
Using the trace duality we deduce that there is a nuclear operator $z$ on $H$ such that $\tr{\abs{z}}=\norm{z}_N=1$ and $\varphi(x)=\tr{zx}$ for $x\in A$. Consider the polar decomposition $z=u\abs{z}$ in $B(H)$. Then $\abs{z}=u^*z$, hence $s(\varphi)\le u^*$. (Note that $\varphi(u^*)=\tr{zu^*}=\tr{u^*z}=\tr{\abs{z}}=1$, hence
$s(\varphi)\le u^*$ by \eqref{eq minimality of the support tripotent}. The converse inequality holds as well, but it is not important.)
It follows that for each $x\in A$ we have
$$\begin{aligned}\norm{x}_\varphi^2&=\varphi(\J xx{u^*})=\frac12\varphi(xx^*u^*+u^*x^*x)=
\frac12\tr{xx^*u^*z+u^*x^*xz}\\&=\frac12(\tr{xx^*\abs{z}}+\tr{u^*x^*xz})\end{aligned}$$
If $\eta\in H$ is a unit vector, we define the operator
$$y_\eta(\zeta)=\ip{\zeta}{\xi}\eta,\qquad\zeta\in H.$$
Then $y_\eta\in A$, $\norm{y_\eta}=1$ and $\norm{Ty_\eta}=1$. Moreover,
$$y_\eta^*(\zeta)=\ip{\zeta}{\eta}\xi,$$
hence
$$y_\eta y_\eta^*(\zeta)=\ip{\zeta}{\eta}\eta\mbox{ and }y_\eta^*y_\eta(\zeta)=\ip{\zeta}{\xi}\xi.$$
Thus
$$\norm{y_\eta}_\varphi^2=\frac12(\tr{\abs{z}y_\eta y_\eta^*}+\tr{zu^* y_\eta^*y_\eta})=\frac12(\ip{\abs{z}\eta}{\eta}+\ip{zu^*\xi}{\xi})\le\frac12(1+\ip{\abs{z}\eta}{\eta}).$$
It follows that
$$\inf\{\norm{x}_\varphi^2\setsep x\in A, \norm{Tx}=1\}\le \frac 12 \inf\{1+\ip{\abs{z}\eta}{\eta}\setsep \norm{\eta}=1\}=\frac12+\frac12\min \sigma(\abs{z}),$$
where the last equality follows from \cite[Theorem 15.35]{fabianetal2011}.
Now, $z$ is a nuclear operator of norm one. Thus $0\in\sigma(\abs{z})$ as $H$ has infinite dimension.
Hence
$$\inf\{\norm{x}_\varphi\setsep x\in A,\norm{Tx}=1\}\le\frac1{\sqrt2},$$
which yields inequality \eqref{eq Example}.
\qed\end{example2}

\begin{remark}
If $H$ is a finite-dimensional Hilbert space, the construction from Example~\ref{ex:Tx=xxi} could be done as well. In this case $A=K(H)=B(H)$ can be identified with the algebra of $n\times n$ matrices where $n=\dim H$. In this case $\sigma(\abs{z})$ need not contain $0$, but at least one of the eigenvalues of $\abs{z}$ is at most $\frac1n$. So, we get a lower bound $\sqrt{\frac{2n}{n+2}}$ for the constant in Theorem~\ref{t constant >sqrt2 in LG for JBstar algebras}.
\end{remark}

Next we address the optimality of the algebraic version of the Little Grothendieck theorem. 

\begin{ques}
What is the optimal constant in Theorem~\ref{T:C*alg-sym},
Theorem~\ref{T:algebraic version dual} and Theorem~\ref{T:algebraic version non-dual}? In particular,
do these theorems hold with the constant $\sqrt{2}$?
\end{ques}

Note that the constant cannot be smaller than $\sqrt2$ due to Example~\ref{ex:Tx=xxi}. The following example shows that Example~\ref{ex:Tx=xxi} cannot yield a greater lower bound.

\begin{example2}
Let $H$, $A$, $\xi$ and $T$ be as in Example~\ref{ex:Tx=xxi}. Let $u\in A^{**}=B(H)$ be any unitary element. Then
$$\varphi_u(x)=\ip{x\xi}{u\xi},\qquad x\in A$$
defines a norm-one functional in $A^*$ such that $s(\varphi_u)\le u$ and, moreover,
$$\norm{Tx}\le\sqrt2 \norm{x}_{\varphi_u}\mbox{ for }x\in A.$$

Indeed, it is clear that $\norm{\varphi_u}\le 1$. Since $\varphi_u(u)=1$, necessarily $\norm{\varphi_u}=1$ and $s(\varphi)\le u$. Moreover, for $x\in A$ we have
$$\begin{aligned}
\norm{x}_{\varphi_u}^2&=\varphi_u\J xxu=\frac12\varphi_u(xx^*u+ux^*x)
=\frac12(\ip{xx^*u\xi}{u\xi}+\ip{ux^*x\xi}{u\xi})
\\&=\frac12(\norm{x^*u\xi}^2+\norm{x\xi}^2)\ge\frac12\norm{x\xi}^2=\frac12\norm{Tx}^2.\end{aligned}$$
This completes the proof.
\qed\end{example2}
 
We continue by recalling the example of \cite{haagerup-itoh} showing optimality of Theorem~\ref{T-C*alg} and explaining that it does not show optimality neither of Theorem~\ref{T:C*alg-sym} nor of Theorem~\ref{T:algebraic version non-dual}.

An important tool to investigate optimality of constants in Theorem~\ref{T-C*alg} is the following characterization.

\begin{prop}[{\cite[Proposition 23.5]{pisier2012grothendieck}}]\label{p equivalent formulation C*}
Let $A$ be a C$^*$-algebra, $H$ a Hilbert space, $T:A\rightarrow H$ a bounded linear map and $K$ a positive number.
Then the following two assertions are equivalent.
\begin{enumerate}[(i)]
\item\label{it equiv formul2 C*} There are states
$\varphi_1, \varphi_2$ on $A$ such that
\begin{eqnarray}
 \norm{Tx}\le K\norm{T}(\varphi_1(x^*x)+\varphi_2(xx^*))^{1/2} \quad \mbox{for } x\in A.   
\end{eqnarray}
\item\label{it equiv formul1 C*} For any finite sequence $(x_j)$ in $A$ we have
\begin{eqnarray}
\left(\sum_j\norm{Tx_j}^2\right)^{1/2}\le K\norm{T}\left(\Norm{\sum_j x_j^*x_j}+\Norm{\sum_j x_jx_j^*}\right)^{1/2}.
\label{eq equiv formul1 C*}
\end{eqnarray}
\end{enumerate}
\end{prop}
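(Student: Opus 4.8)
The plan is to prove the two implications of the equivalence separately: $(\ref{it equiv formul2 C*})\Rightarrow(\ref{it equiv formul1 C*})$ is an elementary computation, while $(\ref{it equiv formul1 C*})\Rightarrow(\ref{it equiv formul2 C*})$ is a Hahn--Banach separation (minimax) argument carried out on the quasi-state space of $A$. For the easy direction, assume $(\ref{it equiv formul2 C*})$ holds with states $\varphi_1,\varphi_2$ and let $(x_j)_{j=1}^n$ be a finite sequence in $A$. Squaring the hypothesis, summing over $j$ and using linearity of $\varphi_1,\varphi_2$ yields
$$\sum_j\norm{Tx_j}^2\le K^2\norm{T}^2\left(\varphi_1\Bigl(\sum_j x_j^*x_j\Bigr)+\varphi_2\Bigl(\sum_j x_jx_j^*\Bigr)\right),$$
and since $\varphi_1,\varphi_2$ are states and $\sum_jx_j^*x_j,\sum_jx_jx_j^*$ are positive, we may bound $\varphi_1\bigl(\sum_jx_j^*x_j\bigr)\le\Norm{\sum_jx_j^*x_j}$ and similarly for the second term, so $\eqref{eq equiv formul1 C*}$ follows after taking square roots.

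For the converse, I would work inside the real Banach space $C_\er(Q\times Q)$, where $Q$ is the quasi-state space of $A$ (the positive functionals of norm $\le 1$), a convex weak$^*$-compact set. To each $x\in A$ associate the affine, weak$^*$-continuous---hence continuous---function
$$h_x(\varphi,\psi)=K^2\norm{T}^2\bigl(\varphi(x^*x)+\psi(xx^*)\bigr)-\norm{Tx}^2,\qquad(\varphi,\psi)\in Q\times Q,$$
and let $\mathcal C\subseteq C_\er(Q\times Q)$ be the convex cone generated by $\{h_x\setsep x\in A\}$. The key point is that $h_{\lambda x}=\abs{\lambda}^2h_x$ for every scalar $\lambda$, so the generating family is stable under multiplication by nonnegative scalars; hence $\mathcal C$ is exactly the set of finite sums $h_{x_1}+\dots+h_{x_n}$ (with the empty sum equal to $0$). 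Using the formula $\norm{a}=\sup\{\varphi(a)\setsep\varphi\in Q\}$ for positive $a$ (applied to $\sum_jx_j^*x_j$ and $\sum_jx_jx_j^*$) together with $\sup_{(\varphi,\psi)}\bigl(f(\varphi)+g(\psi)\bigr)=\sup_\varphi f(\varphi)+\sup_\psi g(\psi)$, one checks that $\eqref{eq equiv formul1 C*}$ is equivalent to the assertion that every $f\in\mathcal C$ is nonnegative at some point of $Q\times Q$; equivalently, $\mathcal C$ is disjoint from the nonempty open convex cone $-\mathcal P$, where $\mathcal P=\{g\in C_\er(Q\times Q)\setsep g>0\text{ on }Q\times Q\}$.

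Now I would separate $\mathcal C$ and $-\mathcal P$ by the Hahn--Banach theorem, obtaining a nonzero continuous linear functional $\Lambda$ on $C_\er(Q\times Q)$ and $\alpha\in\er$ with $\Lambda\ge\alpha$ on $\mathcal C$ and $\Lambda\le\alpha$ on $-\mathcal P$. Since $0\in\mathcal C$ and $\mathcal C$ is a cone, the usual scaling argument forces $\alpha\le0$ and $\Lambda\ge0$ on $\mathcal C$; and $\Lambda\le\alpha\le0$ on $-\mathcal P$ gives $\Lambda\ge0$ on $\mathcal P$, hence, by density of $\mathcal P$ in the positive cone and continuity of $\Lambda$, $\Lambda$ is a positive functional. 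By the Riesz representation theorem $\Lambda$ is integration against a nonzero positive Radon measure on $Q\times Q$, which we normalise to a probability measure $\nu$. Setting $\Phi_1(a)=\int\varphi(a)\di\nu(\varphi,\psi)$ and $\Phi_2(a)=\int\psi(a)\di\nu(\varphi,\psi)$ (the barycentres of the two marginals of $\nu$), which are positive functionals of norm $\le 1$, and applying $\Lambda\ge0$ to each $h_x\in\mathcal C$, one obtains
$$\norm{Tx}^2\le K^2\norm{T}^2\bigl(\Phi_1(x^*x)+\Phi_2(xx^*)\bigr)\qquad(x\in A).$$
Finally, since $x^*x,xx^*\ge0$, replacing $\Phi_i$ by $\Phi_i/\norm{\Phi_i}$ when $\Phi_i\ne0$, and by an arbitrary state otherwise, only enlarges the right-hand side, and produces states $\varphi_1,\varphi_2$ witnessing $(\ref{it equiv formul2 C*})$.

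I expect the main obstacle to be the correct geometric set-up rather than any single estimate: one must exploit the quadratic homogeneity $h_{\lambda x}=\abs{\lambda}^2h_x$ to recognise the relevant family of functions as a convex \emph{cone}---which is exactly what allows a single separating functional to handle all finite sequences in $\eqref{eq equiv formul1 C*}$ simultaneously---and one must carry out the argument over the compact quasi-state space so that Hahn--Banach separation and the Riesz representation theorem are available. The scaling argument turning the separating hyperplane into a positive functional, and the cosmetic passage from positive functionals of norm $\le 1$ to states, are then routine.
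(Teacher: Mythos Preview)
Your argument is correct and is essentially the standard Pietsch-type Hahn--Banach/minimax proof that the paper defers to by citing \cite[Proposition 23.5]{pisier2012grothendieck}; the paper gives no independent proof of this proposition. The only cosmetic point worth noting is that convexity of $\mathcal C$ (needed for separation) follows from the homogeneity you record via $t\,h_x+(1-t)\,h_y=h_{\sqrt{t}\,x}+h_{\sqrt{1-t}\,y}$, which you use implicitly.
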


The following proposition is a complete analogue of the preceding one and can be used to study optimality of Theorem~\ref{T:algebraic version non-dual}. We have not found it explicitly formulated in the literature, but its proof is completely analogous
to the proof of Proposition~\ref{p equivalent formulation C*} given in \cite{pisier2012grothendieck}.

 \begin{prop}\label{p equivalent formulation JB*-algebra}
Let $A$ be a unital JB$^*$-algebra, $H$ a Hilbert space, $T:A\rightarrow H$ a bounded linear map and $K$ a positive number.
Then the following two assertions are equivalent.
\begin{enumerate}[(i)]
\item\label{it equiv formul2 JB*} There is a state
$\varphi$ on $A$ such that
\begin{eqnarray}
 \norm{Tx}\le K\norm{T}\varphi(x^*\circ x)^{1/2} \quad \mbox{for } x\in A.
\end{eqnarray}
\item\label{it equiv formul1 JB*} For any finite sequence $(x_j)$ in $A$ we have
\begin{eqnarray}
\left(\sum_j\norm{Tx_j}^2\right)^{1/2}\le K\norm{T}\Norm{\sum_j x_j^*\circ x_j}^{1/2}.
\label{eq equiv formul1 JB*}
\end{eqnarray}
\end{enumerate}
\end{prop}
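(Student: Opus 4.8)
**Proof proposal for Proposition~\ref{p equivalent formulation JB*-algebra}.**

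The plan is to mimic the proof of Proposition~\ref{p equivalent formulation C*} (i.e.\ \cite[Proposition 23.5]{pisier2012grothendieck}), replacing the two-sided data $(x^*x, xx^*)$ by the single Jordan square $x^*\circ x$ and the pair of states by a single state. The implication $(\ref{it equiv formul2 JB*})\Rightarrow(\ref{it equiv formul1 JB*})$ is the easy direction: given a state $\varphi$ with $\norm{Tx}\le K\norm{T}\varphi(x^*\circ x)^{1/2}$, for a finite sequence $(x_j)$ we sum the squared inequalities to get $\sum_j\norm{Tx_j}^2\le K^2\norm{T}^2\varphi\left(\sum_j x_j^*\circ x_j\right)$, and then bound $\varphi\left(\sum_j x_j^*\circ x_j\right)\le\norm{\varphi}\cdot\norm{\sum_j x_j^*\circ x_j}=\norm{\sum_j x_j^*\circ x_j}$, using that $\varphi$ is a state and that each $x_j^*\circ x_j$ — hence the sum — is positive, so the functional norm is attained on positives.

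For the converse $(\ref{it equiv formul1 JB*})\Rightarrow(\ref{it equiv formul2 JB*})$ I would run the standard Hahn--Banach/minimax separation argument. Consider in the real ordered Banach space $A_{sa}$ (or rather in $(B^{**})_{sa}$ where $B=A$, so that weak$^*$-compactness of the state space is available) the convex set
$$
\mathcal C=\Bigl\{\, a\in A_{sa}\ :\ \exists\ \text{a finite sequence }(x_j)\text{ in }A\ \text{with}\ \textstyle\sum_j x_j^*\circ x_j\le a\ \text{and}\ \sum_j\norm{Tx_j}^2> K^2\norm{T}^2\,\Bigr\},
$$
and the positive cone direction: hypothesis $(\ref{it equiv formul1 JB*})$ says exactly that $\mathcal C$ does not meet the ball $\{a:\norm a\le 1\}$ (or, more precisely, that $\inf\{\norm a: a\in\mathcal C\}\ge 1$). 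One separates $\mathcal C$ from an appropriate set by a functional $\varphi\in A^*$; positivity of $\varphi$ comes from the fact that $\mathcal C$ is stable under adding positive elements (if $a\in\mathcal C$ and $b\ge0$ then $a+b\in\mathcal C$), and after normalizing, $\varphi$ is a state. Tracking the constant through the separation then yields $\norm{Tx}^2\le K^2\norm{T}^2\varphi(x^*\circ x)$ for every single $x\in A$ (take the one-term sequence). A cleaner route, exactly as in \cite{pisier2012grothendieck}, is to fix the finite-dimensional subspaces: for each finite set $F\subset A$ one uses the hypothesis together with a minimax theorem (von Neumann/Ky Fan, applied on the weak$^*$-compact convex state space of $A$ against the finite-dimensional simplex indexed by $F$) to produce a state $\varphi_F$ satisfying the desired inequality on $F$; then a weak$^*$-cluster point $\varphi$ of the net $(\varphi_F)$ along the directed set of finite subsets is the required state, the inequality passing to the limit because for fixed $x$ the maps $\varphi\mapsto\varphi(x^*\circ x)$ are weak$^*$-continuous.

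The point where one must be a little careful — and the only place the Jordan (rather than associative) structure enters — is the minimax step: one needs $\sum_j\norm{Tx_j}^2\le K^2\norm{T}^2\Norm{\sum_j x_j^*\circ x_j}$ to be rephrased as
$$
\sup_{(x_j)}\ \inf_{\varphi\ \text{state}}\ \Bigl(K^2\norm{T}^2\varphi\bigl(\textstyle\sum_j x_j^*\circ x_j\bigr)-\sum_j\norm{Tx_j}^2\Bigr)\ge 0,
$$
and then to swap $\sup$ and $\inf$. Convexity in the $\varphi$-variable is clear (the expression is affine in $\varphi$); for the $(x_j)$-variable one uses that the set of admissible tuples can be taken convex after the usual homogenization trick (replacing $(x_j)$ by suitable combinations), exactly as in Pisier's argument, and that $\varphi$ ranges over a weak$^*$-compact convex set. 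The Gelfand--Naimark axiom and the positivity of $x^*\circ x$ in a JB$^*$-algebra (recorded in Section~\ref{sec: notation}) guarantee that $\Norm{\sum_j x_j^*\circ x_j}=\sup_{\varphi}\varphi(\sum_j x_j^*\circ x_j)$ over states $\varphi$, which is the identity that makes the reformulation correct. No genuinely new difficulty arises; the proof is a transcription of the C$^*$-case with $xx^*+x^*x$ replaced throughout by $2\,x\circ x^*$ and the pair of states collapsed to one, which is why we only indicate it.
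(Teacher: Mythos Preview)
Your proposal is correct and takes essentially the same approach as the paper: the paper does not give a detailed proof either, remarking only that it is ``completely analogous to the proof of Proposition~\ref{p equivalent formulation C*} given in \cite{pisier2012grothendieck}'', which is precisely the argument you outline (the easy direction by summing and bounding via $\norm{\varphi}=1$, the hard direction by the Pietsch--Hahn--Banach/minimax separation over the weak$^*$-compact state space, using that $\Norm{\sum_j x_j^*\circ x_j}=\sup_\varphi \varphi(\sum_j x_j^*\circ x_j)$ for positive elements in a unital JB$^*$-algebra).
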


We recall the example originated in \cite{haagerup-itoh} and formulated and proved in this setting in \cite{pisier2012grothendieck}.

\begin{example}[{\cite[Lemma 11.2]{pisier2012grothendieck}}]\label{example}
Consider an integer $n\ge1$. Let $N=2n+1$ and $d=\begin{pmatrix}2n+1\\n\end{pmatrix}=\begin{pmatrix}2n+1\\n+1\end{pmatrix}$.
Let $\tau_d$ denote the normalized trace on  the space $M_d$ of $d\times d$ (complex) matrices. There are $x_1, \ldots, x_N$
in $M_d$ such that $\tau_d(x_i^*x_j)=1$ if $i=j$ and $=0$ otherwise, satisfying
\begin{eqnarray}
\sum_j x_j^*x_j=\sum_j x_jx_j^*=NI  \label{eq1 example}
\end{eqnarray}
and moreover such that, with $a_n=(n+1)/(2n+1)$,
\begin{eqnarray}
\forall \alpha=(\alpha_i)\in\ce^N,\quad  \Norm{\sum
_j\alpha_jx_j}_{(M_d)^*}=d\sqrt{a_n}\left(\sum_j\betr{\alpha_j}^2\right)^{1/2}.
\label{eq2 example}
\end{eqnarray}
\end{example}

In the following example we show that the previous one yields the optimality of Theorem~\ref{T-C*alg} but does not help to find the optimal constant for Theorem~\ref{T:C*alg-sym} or Theorem~\ref{T:algebraic version non-dual}. The first part is proved already in \cite{haagerup-itoh} (cf. \cite[Section 11]{pisier2012grothendieck}) but we include the proof for the sake of completeness and, further, in order to compare it with the second part.

\begin{example2} Fix $n\ge1$.
With the notation of Example \ref{example}
define $T:M_d\to \ell_2^N$ by $$T(x)=(\tau_d(x_j^*x))_{j=1}^N,\qquad x\in M_d.$$ 
Let $(\eta_j)_{j=1}^N$ be the canonical orthonormal basis of $\ell_2^N$. Then the dual mapping $T^*:\ell_2^N\to M_d^*$ fulfils
$$\ip{T^*(\eta_j)}{x}=\ip{\eta_j}{T(x)}=\tau_d(x_j^*x)=\frac1d\tr{x_j^*x}\mbox{ for }x\in M_d,$$
thus $T^*(\eta_j)=\frac1dx_j^*$ (we use the trace duality). Then \eqref{eq2 example} shows that
$$\norm{T^*(\alpha)}=\frac1d\norm{\sum_{j=1}^N\alpha_jx_j^*}_{(M_d)^*}=\sqrt{a_n}\norm{\alpha}\mbox{ for }\alpha\in\ell_2^N.$$
In particular, $\frac1{\sqrt{a_n}}T^*$ is an isometric embedding, thus $\frac1{\sqrt{a_n}}T$ is a quotient mapping. Hence, $\norm{T}=\sqrt{a_n}$.

Further, $T(x_j)=\eta_j$ for $j=1,\dots,N$, so
$$\sum_{j=1}^N \norm{T(x_j)}^2=N$$
and
$$\norm{\sum_{j=1}^N x_j^*x_j}+\norm{\sum_{j=1}^N x_jx_j^*}=2\norm{NI}=2N.$$
Thus due to Proposition~\ref{p equivalent formulation C*} the optimal value of the constant in Theorem~\ref{T-C*alg} is bounded below by
$$\frac{1}{\sqrt{2a_n}}=\sqrt{\frac{2n+1}{2n+2}}\to 1.$$

On the other hand,
$$\norm{\sum_{j=1}^N x_j^*\circ x_j}=\norm{NI}=N,$$
thus Proposition~\ref{p equivalent formulation JB*-algebra} yields that the optimal value of the constant in Theorem~\ref{T:C*alg-sym} is bounded below by
$$\frac1{\sqrt{a_n}}=\sqrt{\frac{2n+1}{n+1}}\to \sqrt2,$$
so it gives nothing better than Example~\ref{ex:Tx=xxi}.

In fact, this operator $T$ satisfies Theorem~\ref{T:C*alg-sym} with constant $\frac1{\sqrt{a_n}}\le\sqrt{2}$. 

To see this observe that $(x_j)_{j=1}^N$ is an orthonormal system in $M_d$ equipped with the normalized Hilbert-Schmidt inner product.
Hence, any $x\in M_d$ can be expressed as
$$x=y+\sum_{j=1}^N\alpha_jx_j,$$
where $\alpha_j$ are scalars and $y\in\{x_1,\dots,x_N\}^{\perp_{HS}}$.
Then $T(x)=(\alpha_j)_{j=1}^N$ and
$$\tau_d(x^*\circ x)=\tau_d(x^*x)=\tau_d(y^*y)+\sum_{j=1}^N\abs{\alpha_j}^2\ge \sum_{j=1}^N\abs{\alpha_j}^2=\norm{T(x)}^2.$$
Hence
$$\norm{T(x)}\le \tau_d(x^*\circ x)^{1/2}=\frac1{\sqrt{a_n}}\norm{T} \tau_d(x^*\circ x)^{1/2}.$$
Since $\tau_d$ is a state, the proof is complete.
\end{example2}

We continue by an example showing that there is a real difference between the triple and algebraic versions of the Little Grothendieck theorem.

\begin{example}\label{ex:alg vs triple} \
\begin{enumerate}[$(a)$]
    \item Let $M$ be any JBW$^*$-triple and let 
$\varphi\in M_*$ be a norm-one functional. Then $$\abs{\varphi(x)}\le\norm{x}_\varphi,\mbox{ for all }x\in M,$$
hence $\varphi:M\to\ce$ satisfies Theorem~\ref{T:triples}$(3)$ with constant one.
\item Let $M_2$ be the algebra of $2\times 2$ matrices. Then there is a norm-one functional $\varphi:M_2\to \ce$ not satisfying Theorem~\ref{T:C*alg-sym} with constant smaller than $\sqrt{2}$.
\item In particular, the constant $\sqrt{2}$ in Lemma~\ref{L:rotation} is optimal.
\end{enumerate}
\end{example}

\begin{proof}
$(a)$ The desired inequality was already stated in \cite[comments before Definition 3.1]{barton1990bounded}. Let us give some details. We set $e=s(\varphi)$. Then
$$\abs{\varphi(x)}=\abs{\varphi(P_2(e)x)}=\abs{\varphi(\J{P_2(e)x}ee}\le\norm{P_2(e)x}_\varphi\norm{e}_\varphi=\norm{P_2(e)x}_\varphi.$$
Moreover,
$$\begin{aligned}\norm{x}_\varphi^2&=\varphi\J xxe=\varphi(P_2(e)\J xxe)\\&=\varphi(\J{P_2(e)x}{P_2(e)x}e+\J{P_1(e)x}{P_1(e)x}e)=\norm{P_2(e)x}_\varphi^2+\norm{P_1(e)x}_\varphi^2\\&\ge\norm{P_2(e)x}_\varphi^2.\end{aligned}$$

$(b)$ Each $a\in M_2$ can be represented as $a=(a_{ij})_{i,j=1,2}$. Define $\varphi:M_2\to\ce$ by
$$\varphi(a)=a_{12},\quad a\in M_2.$$
{
It is clear that $\norm{\varphi}=1$ and that $\varphi(s)=1$ where
$$s=\begin{pmatrix} 0 & 1\\ 0& 0\end{pmatrix}.$$
Let $\psi$ be any state on $M_2$.
Then
$$\norm{s}_\psi^2=\psi (\J ss{\mathbf{1}})=\frac12\psi(s s^*+s^*s)=\frac12\psi(\mathbf{1})=\frac12.$$
Thus $\varphi(s)=\sqrt{2}\norm{s}_\psi$ for any state $\psi$ on $A=M_2$, which completes the proof.

$(c)$ This follows from $(b)$ (consider $p=\mathbf{1}$).}
\end{proof}

\section{Notes and problems on general JB$^*$-triples}\label{sec:triples}

The main result, Theorem~\ref{t constant >sqrt2 in LG for JBstar algebras}, is formulated and proved for JB$^*$-algebras. The assumption that we deal with a JB$^*$-algebra, not with a general JB$^*$-triple, was strongly used in the proof. Indeed, the key step was to prove
the dual version for JBW$^*$-algebras, Theorem~\ref{T:JBW*-algebras},
and we substantially used the existence of unitary elements. So, the following problem remains open.

\begin{ques}
Is Theorem~\ref{t constant >sqrt2 in LG for JBstar algebras} valid for general JB$^*$-triples?
\end{ques}

We do not know how to attack this question. However, there are some easy partial results. Moreover, some of our achievements may be easily extended to JBW$^*$-triples. In this section we collect such results.

The first example shows that for some JB$^*$-triples the optimal constant in the Little Grothendieck Theorem is easily seen to be $\sqrt2$. This is shown by completely elementary methods.

\begin{example2}
Let $H$ be a Hilbert space considered as the triple $B(\ce,H)$ (i.e., a type 1 Cartan factor). That is, the triple product is given by
$$\J xyz=\frac12(\ip xyz+\ip zyx),\quad x,y,z\in H.$$
The dual coincides with the predual and it is isometric to $H$. Let $y\in H^*$ be a norm-one element, i.e. we consider it as the functional $\ip{\cdot}{y}$. Then 
$s(y)=y$. So, for $x\in H$ we have
$$\norm{x}_y^2=\ip{\J xxy}{y}=
\frac12\ip{\ip xxy+\ip yxx}{y}=\frac12(\norm{x}^2+\abs{\ip xy}^2)\ge\frac12\norm{x}^2.$$
Hence, if $K$ is another Hilbert space and $T:H\to K$ is a bounded linear operator, then for any norm-one $y\in H^*$ we have
$$\norm{Tx}\le\norm{T}\norm{x}\le\sqrt{2}\norm{T}\norm{x}_y,$$
so we have the Little Grothendieck theorem with constant $\sqrt2$.\smallskip

Moreover, the constant $\sqrt2$ is optimal in this case as soon as $\dim H\ge2$. Indeed, let $T:H\to H$ be the identity. Given any norm-one element $y\in H$, we may find a norm-one element $x\in H$ with $x\perp y$. The above computation shows that $\norm{x}=\sqrt{2}\norm{x}_y$.
\end{example2}

Another case, nontrivial but well known, is covered by the following example.

\begin{example2}
Assume that $E$ is a finite-dimensional JB$^*$-triple. Then $E$ is reflexive and, moreover, any bounded linear operator $T:E\to H$ (where $H$ is a Hilbert space) attains its norm. Hence $E$ satisfies the Little Grothendieck theorem with constant $\sqrt{2}$ by Theorem~\ref{T:triples}$(1)$.
\end{example2}

We continue by checking which methods used in the present paper easily work for general triples.

\begin{obs}\label{obs:type I approx triples}
Proposition~\ref{P:type I approx} holds for corresponding JBW$^*$-triples as well.
\end{obs}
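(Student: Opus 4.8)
The statement to be established is: if $M$ is a JBW$^*$-triple triple-isomorphic to a direct sum $M_1\oplus^{\ell_\infty}M_2$ with $M_1$ a finite JBW$^*$-triple and $M_2$ of type $I$, then for each $\varphi\in M_*$ and each $\varepsilon>0$ there are $\varphi_1,\varphi_2\in M_*$ satisfying $(i)$--$(iv)$ of Proposition~\ref{P:type I approx}. As in the algebra case, the validity of this conclusion is stable under $\ell_\infty$-sums (one splits $\varphi$ along the orthogonal weak$^*$-closed ideals, $M_*$ being the $\ell_1$-sum of their preduals, absorbs all but finitely many summands into $\varphi_2$, handles the rest one at a time, and uses that a finite orthogonal sum of finite tripotents lying in mutually orthogonal ideals is again finite, and that a tripotent in a weak$^*$-closed ideal is finite in $M$ precisely when it is finite in the ideal). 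Thus it suffices to verify the conclusion on building blocks. For these I would use \cite[Theorem 7.1]{Finite}, which writes $M$ as an orthogonal sum $N_1\oplus^{\ell_\infty}N_2\oplus^{\ell_\infty}N_3\oplus^{\ell_\infty}N_4$ with $N_1$ a finite JBW$^*$-algebra, $N_2$ trivial or a properly infinite JBW$^*$-algebra, $N_3$ a finite JBW$^*$-triple, and $N_4$ trivial or a corner $qV_4$ of a von Neumann algebra by a properly infinite projection, with $qV_4$ having no direct summand isomorphic to a JBW$^*$-algebra; our hypothesis forces each $N_k$ to again be ``finite $\oplus$ type $I$''. For $N_1$ and $N_3$ every tripotent is finite by the definition of a finite JBW$^*$-triple, so one takes $\varphi_1=\varphi$ and $\varphi_2=0$ (cf.\ Observation~\ref{obs:finite JBW* algebras}); and for $N_2$, being a type $I$ JBW$^*$-algebra, the assertion is exactly Proposition~\ref{P:type I approx}.

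The only genuinely new case is $N_4=qV_4$. Here $V_4$ is a type $I$ von Neumann algebra, so by the detailed form of \cite[Theorem 7.1]{Finite} together with \cite[Theorem V.1.27]{Tak} the triple $qV_4$ reduces, up to triple isomorphism and up to $\ell_\infty$-sums, to summands of the form $L^\infty(\mu)\overline{\otimes}B(H,K)$ --- abelian von Neumann algebras tensored with rectangular type $1$ Cartan factors (a square corner $pB(K)p$ would be a von Neumann algebra, hence excluded). On such a summand the proof given in Section~\ref{sec:type I} for $A\overline{\otimes}B(H)$ carries over with only notational changes: one realizes $B(H,K)$ as the corner $p_K\,B(H\oplus K)\,p_H$ of a von Neumann algebra, identifies $M_*$ with $L^1(\mu,N(H,K))$, and applies the measurable Schmidt decomposition of Theorem~\ref{T:measurable Schmidt} and Proposition~\ref{P:L1 measurable repr}, which are stated for compact operators on one Hilbert space but work verbatim for operators between two Hilbert spaces. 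Truncating the expansion $\g=\sum_n\zeta_n\uu_n$ at a level $N$ with $\norm{\sum_{n>N}\zeta_n\uu_n}<\varepsilon$ produces $\varphi_1$ (the partial sum) and $\varphi_2$ (its complement): $\varphi_1\perp\varphi_2$ because the corresponding support tripotents are orthogonal, $\norm{\varphi_2}<\varepsilon$ by the choice of $N$, and $s(\varphi_1)$ is finite, being dominated by a finite-rank partial isometry.

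I expect the main obstacle to be this reduction of the corner $qV_4$ to a controlled $\ell_\infty$-sum of tensor products $L^\infty(\mu)\overline{\otimes}B(H,K)$: it requires a measurable selection (chopping the base space into the pieces where the rank of $q(\cdot)$ is constant and choosing a measurable field of partial isometries trivialising the ranges), plus the bookkeeping needed to see that the truncated partial isometry $T_{\vv^*}$ is a finite tripotent in $N_4$, the relevant statements being the $B(H,K)$-analogues of \cite[Proposition 4.7 and Lemma 5.16(ii)]{Finite}. Everything else is of the same routine nature as the computations already carried out in Section~\ref{sec:type I}, and the one nontrivial analytic ingredient --- the measurable Schmidt decomposition --- is type-independent and already in hand.
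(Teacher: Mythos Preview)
Your approach could likely be pushed through, but it is far more laborious than necessary, and the obstacle you yourself flag --- reducing $qV_4$ to an $\ell_\infty$-sum of $L^\infty(\mu)\overline{\otimes}B(H,K)$ via measurable selection, and then verifying the rectangular analogues of \cite[Proposition 4.7 and Lemma 5.16(ii)]{Finite} --- is a genuine block of additional work that the paper sidesteps entirely.

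The paper's proof is a three-line reduction to the algebra case already established. Given $\varphi\in M_*$ with $M$ of type $I$, set $e=s(\varphi)$. The key observation is that the Peirce-2 subspace $M_2(e)$ is always a JBW$^*$-\emph{algebra} (with unit $e$), and it inherits the type $I$ property from $M$ by \cite[Theorem 4.2]{BuPe02}. Since $\varphi=\varphi\circ P_2(e)$, one applies Proposition~\ref{P:type I approx} directly to $\varphi|_{M_2(e)}\in M_2(e)_*$, obtaining $\varphi_1,\varphi_2\in M_2(e)_*$, and pulls back via $P_2(e)$. Finiteness of $s(\varphi_1)$ transfers from $M_2(e)$ to $M$ because for any tripotent $u\in M_2(e)$ the Peirce arithmetic gives $M_2(u)\subset M_2(e)$, so $M_2(u)$ is the same whether computed in $M$ or in $M_2(e)$.

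What your route would buy is an explicit structural picture independent of the inheritance result from \cite{BuPe02}; what the paper's route buys is brevity and the complete avoidance of any new rectangular-operator analysis. The moral worth retaining: before re-running the Schmidt machinery in a new ambient triple, check whether passing to $M_2(s(\varphi))$ already lands you in a JBW$^*$-algebra where the result is known.
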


\begin{proof}
It is clear that it is enough to prove it separately for finite JBW$^*$-triples and for type I JBW$^*$-triples. The case of finite JBW$^*$-triples is trivial (one can take $\varphi_2=0$). 
So, let $M$ be a JBW$^*$-triple of type I, $\varphi\in M_*$ and $\varepsilon>0$. Set $e=s(\varphi)$. Then $M_2(e)$ is a type I JBW$^*$-algebra (see \cite[comments on pages 61-62 or Theorem 4.2]{BuPe02}) and $\varphi|_{M_2(e)}\in M_2(e)_*$. Apply Proposition~\ref{P:type I approx} to $M_2(e)$ and $\varphi|_{M_2(e)}$ to get $\varphi_1$ and $\varphi_2$.
The pair of functionals $\varphi_1\circ P_2(e)$ and $\varphi_2\circ P_2(e)$ completes the proof.
\end{proof}

Observe that the validity of Proposition~\ref{P:type I approx} for finite JBW$^*$-triples is trivial but useless if we have no unitary element. However, the `type I part' may be used at least in some cases.

\begin{prop}\label{P:B(H,K)}
Let $M=L^\infty(\mu)\overline{\otimes}B(H,K)$, where $H$ and $K$ are infinite-di\-men\-sional Hilbert spaces. Then Proposition~\ref{P:majorize 1+2+epsilon} holds for $M$.
\end{prop}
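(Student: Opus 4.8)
The plan is to reduce the assertion to the already established JBW$^*$-algebra case (Proposition~\ref{P:majorize 1+2+epsilon}) inside a suitable Peirce-$2$ corner of $M$. First I would dispose of the case $\dim H=\dim K$: a surjective isometry $H\to K$ yields a triple isomorphism $B(H,K)\cong B(H)$, so then $M$ is triple-isomorphic to the von Neumann algebra $L^\infty(\mu)\overline{\otimes}B(H)$ and Proposition~\ref{P:majorize 1+2+epsilon} applies verbatim. Hence I may assume $\dim H<\dim K$; since $H$ is infinite-dimensional, $\dim K$ is then uncountable. Using the identification $M_*=L^1(\mu,N(H,K))$ together with a measurable polar decomposition (exactly in the spirit of Theorem~\ref{T:measurable Schmidt} and Proposition~\ref{P:measurable nuclear rep}), the support tripotents $s(\varphi_1),s(\varphi_2)\in M$ are measurable fields of partial isometries whose fibrewise ranges are separable subspaces of $K$.

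The crucial geometric step is to choose a complete tripotent $w\in M$, i.e.\ a measurable field $\omega\mapsto w(\omega)$ of isometries of $H$ into $K$, with $w(\omega)H\supseteq\ran\bigl(s(\varphi_1)(\omega)\bigr)+\ran\bigl(s(\varphi_2)(\omega)\bigr)$ for $\mu$-almost every $\omega$; this is possible by a measurable selection argument because the right-hand side is fibrewise separable and $\dim H\le\dim K$. For such a $w$ one has $M_0(w)=\{0\}$, so $M=M_2(w)\oplus M_1(w)$; the map $z\mapsto w^*z$ is a Jordan $*$-isomorphism of $M_2(w)$ onto the von Neumann algebra $W_0:=L^\infty(\mu)\overline{\otimes}B(H)$ (with unit $w$), while $M_1(w)\cong L^\infty(\mu)\overline{\otimes}B\bigl(H,(1-ww^*)K\bigr)$, and, by construction, $s(\varphi_1),s(\varphi_2)\in M_2(w)$. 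From $s(\varphi_i)\in M_2(w)$ one gets $M_2(s(\varphi_i))\subseteq M_2(w)$, hence $\varphi_i$ vanishes on $M_1(w)$, and a short Peirce computation gives, for any $\eta\in M_*$ with $s(\eta)\in M_2(w)$ and any $x\in M$,
$$\norm{x}_\eta^2=\norm{P_2(w)x}_\eta^2+\norm{P_1(w)x}_\eta^2,\qquad \J{P_1(w)x}{P_1(w)x}{s(\eta)}=\tfrac12\,s(\eta)\bigl((P_1(w)x)^*(P_1(w)x)\bigr),$$
so that $\norm{P_1(w)x}_\eta^2=\tfrac12\,\abs{\widehat{\eta}^{\,*}}\bigl((P_1(w)x)^*(P_1(w)x)\bigr)$, where $\widehat\eta\in(W_0)_*$ corresponds to $\eta|_{M_2(w)}$ (note $\norm{\widehat\eta}=\norm\eta$) and $\abs{\widehat\eta^{\,*}}:=\widehat\eta\,s(\widehat\eta)$ is the usual positive functional of norm $\norm\eta$.

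Next I would apply Proposition~\ref{P:majorize 1+2+epsilon} to the JBW$^*$-algebra $M_2(w)$ and the restrictions $\varphi_1|_{M_2(w)},\varphi_2|_{M_2(w)}$ with the given $\varepsilon$; this produces a norm-one $\psi_0\in(M_2(w))_*$ with $\norm{y}_{\varphi_1}^2+\norm{y}_{\varphi_2}^2\le C\norm{y}_{\psi_0}^2$ for $y\in M_2(w)$, where $C:=\norm{\varphi_1}+2\norm{\varphi_2}+\varepsilon$. Put $\psi:=\psi_0\circ P_2(w)\in M_*$; then $\norm\psi=1$, $s(\psi)=s(\psi_0)\in M_2(w)$, and $\norm{\cdot}_\psi=\norm{\cdot}_{\psi_0}$ on $M_2(w)$. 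In view of the orthogonal splitting above it remains only to establish the same majorization on $M_1(w)$, and by the formula for $\norm{\cdot}_\eta$ on $M_1(w)$ — and since $\dim(1-ww^*)K\ge\dim H$, so that $x_1^*x_1$ runs over all of $W_0^+$ as $x_1$ runs over $M_1(w)$ — this amounts to the single inequality
$$\abs{\widehat{\varphi_1}^{\,*}}+\abs{\widehat{\varphi_2}^{\,*}}\le C\,\abs{\widehat{\psi}^{\,*}}\quad\text{as positive normal functionals on }W_0 .$$

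The heart of the argument, and the main obstacle, is precisely this last inequality: $\psi$ is manufactured by the JBW$^*$-algebra result only inside the corner $M_2(w)$, and one must still control the ``off-diagonal'' seminorms on $M_1(w)$. I would extract it from the inequality furnished by Proposition~\ref{P:majorize 1+2+epsilon} itself: rewritten in the von Neumann algebra $W_0$ it reads $\sum_{i=1}^{2}\bigl(\abs{\widehat{\varphi_i}}(yy^*)+\abs{\widehat{\varphi_i}^{\,*}}(y^*y)\bigr)\le C\bigl(\abs{\widehat\psi}(yy^*)+\abs{\widehat\psi^{\,*}}(y^*y)\bigr)$ for all $y\in W_0$. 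Given a projection $p\in W_0$ and $\delta>0$, the proper infiniteness of $W_0=L^\infty(\mu)\overline{\otimes}B(H)$ (here is where $\dim H=\infty$ enters decisively) lets one find, fibrewise by pushing into the tail of the positive normal functional $\abs{\widehat\psi}$, a projection $p'\sim p$ with $\abs{\widehat\psi}(p')<\delta$; applying the inequality to a partial isometry $y$ with $y^*y=p$ and $yy^*=p'$ and letting $\delta\to0$ yields $\sum_i\abs{\widehat{\varphi_i}^{\,*}}(p)\le C\abs{\widehat\psi^{\,*}}(p)$, and normality upgrades this to all of $W_0^+$. Combining the $M_2(w)$- and $M_1(w)$-estimates through the splitting gives $\norm{x}_{\varphi_1}^2+\norm{x}_{\varphi_2}^2\le C\norm{x}_\psi^2$ for every $x\in M$, which is the assertion. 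Apart from this point the only remaining care is the measurability in the selections of $w$ and of the projections $p'$, which I would handle by the same kind of measurable-selection arguments used in Section~\ref{sec:type I}.
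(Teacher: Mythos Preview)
Your argument is correct but follows a genuinely different route from the paper's.

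The paper first proves that the Peirce-$2$ subspaces of tripotents in $M$ are upward directed (using that $M=pV$ with $p$ properly infinite and \cite[Lemma 9.8]{hamhalter2019mwnc}); it then applies the type~I approximation (Observation~\ref{obs:type I approx triples}) to decompose $\varphi_1=\varphi_{11}+\varphi_{12}$ with $s(\varphi_{11})$ finite and $\norm{\varphi_{12}}$ small, picks a tripotent $u$ whose Peirce-$2$ space contains all three support tripotents, finds a unitary $v\in M_2(u)$ with $s(\varphi_{11})\le v$, and concludes via Lemma~\ref{L:rotation}. In other words, the paper essentially re-runs the proof of Proposition~\ref{P:majorize 1+2+epsilon} inside $M$, exploiting only the directedness of Peirce-$2$ corners and the existence of large unitaries there.

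Your approach instead treats Proposition~\ref{P:majorize 1+2+epsilon} as a black box on the JBW$^*$-algebra $M_2(w)$ and then \emph{extends} the resulting seminorm inequality across the off-diagonal piece $M_1(w)$. The key new idea is the proper-infiniteness ``extraction'': from the symmetric inequality $\sum_i\bigl(\abs{\widehat{\varphi_i}}(yy^*)+\abs{\widehat{\varphi_i}^{\,*}}(y^*y)\bigr)\le C\bigl(\abs{\widehat\psi}(yy^*)+\abs{\widehat\psi^{\,*}}(y^*y)\bigr)$ you isolate the one-sided estimate $\sum_i\abs{\widehat{\varphi_i}^{\,*}}\le C\abs{\widehat\psi^{\,*}}$ by sliding $yy^*$ into a subprojection of arbitrarily small $\abs{\widehat\psi}$-mass. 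That step is sound (in $L^\infty(\mu)\overline\otimes B(H)$ with $\dim H=\infty$ one writes $1=\sum_n e_n$ with $e_n\sim 1$ mutually orthogonal, and for any projection $p$ one finds $p'\sim p$ with $p'\le e_n$ for an $n$ with $\abs{\widehat\psi}(e_n)<\delta$), and the passage from projections to $W_0^+$ by the spectral theorem is routine. Two comments: the existence of the complete tripotent $w$ with $s(\varphi_1),s(\varphi_2)\in M_2(w)$ follows directly from the paper's upward-directedness lemma (the $u$ constructed there has $p_f(u)=p$, hence is complete), so the measurable-selection detour is unnecessary; and the surjectivity of $x_1\mapsto x_1^*x_1$ onto $W_0^+$ is not actually needed---once you have the inequality on $W_0^+$, the $M_1(w)$ estimate follows from your formula regardless.

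What each approach buys: the paper's proof is shorter, avoids all measurable-selection issues, and stays entirely within the toolkit already developed in Sections~\ref{sec:majorizing}--\ref{sec:type I}. Your approach is conceptually appealing as a reduction to the already-established algebra case, and the proper-infiniteness trick for separating the two halves of a symmetric prehilbertian inequality is an interesting device in its own right.
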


\begin{proof} Let us start by showing that  Peirce-2 subspaces of tripotents in $M$ are upwards directed by inclusion. To this end first observe that $M=pV$, where $V$ is a von Neumann algebra and $p\in V$ is a properly infinite projection.
This is explained for example in \cite[p. 43]{hamhalter2019mwnc}. Now assume that $u_1,u_2\in pV$ are two tripotents (i.e., partial isometries in $V$ with final projections below $p$). By \cite[Lemma 9.8(c)]{hamhalter2019mwnc} there are projections $q_1,q_2\in V$ such that $q_j\ge p_i(u_j)$ and $q_j\sim p$ for $j=1,2$. Further, by \cite[Lemma 9.8(a)]{hamhalter2019mwnc} we have $q_1\vee q_2\sim p$, so there is a partial isometry $u\in V$ with $p_i(u)=q_1\vee q_2$ and $p_f(u)=p$. Then $u\in pV=M$ and $M_2(u)\supset M_2(u_1)\cup M_2(u_2)$.
 
Now we proceed with the proof of the statement itself. Let $\varphi_1,\varphi_2\in M_*$ and $\varepsilon>0$. Note that $M$ is of type I, hence we may apply Observation~\ref{obs:type I approx triples} to get the respective decomposition $\varphi_1=\varphi_{11}+\varphi_{12}$. 
Let $u\in M$ be a tripotent such that $M_2(u)$ contains $s(\varphi_{11}),s(\varphi_{12}),s(\varphi_2)$. Such a $u$ exists as Peirce-2 subspaces of tripotents in $M$ are upwards directed by inclusion as explained above. We can find a unitary $v\in M_2(u)$ with $s(\varphi_{11})\le v$ (recall that $s(\varphi_{11})$ is a finite tripotent and use \cite[Proposition  7.5]{Finite}). We conclude by applying Lemma~\ref{L:rotation}. 
\end{proof}

Combining the previous proposition with Theorem~\ref{T:triples-dual} we get the following.

\begin{cor}
Let $M=L^\infty(\mu)\overline{\otimes}B(H,K)$, where $H$ and $K$ are infinite-di\-men\-sio\-nal Hilbert spaces. Then Theorem~\ref{T:JBW*-algebras} holds for $M$.
\end{cor}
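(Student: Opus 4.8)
The plan is to repeat, essentially verbatim, the deduction of Theorem~\ref{T:JBW*-algebras} from Theorem~\ref{T:triples-dual} and Proposition~\ref{P:majorize 1+2+epsilon}, only replacing the latter by Proposition~\ref{P:B(H,K)}, which asserts precisely that Proposition~\ref{P:majorize 1+2+epsilon} remains valid for $M=L^\infty(\mu)\overline{\otimes}B(H,K)$. Observe that Theorem~\ref{T:triples-dual} is stated for arbitrary JBW$^*$-triples, hence it applies to $M$ directly, even though $M$ is in general not a JBW$^*$-algebra.

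Concretely, I would fix a weak$^*$-to-weak continuous linear operator $T$ from $M$ into a Hilbert space (call it $H_0$ so as not to clash with the factor $H$) together with $\varepsilon>0$, and pick $\delta>0$, to be specified at the end. Applying Theorem~\ref{T:triples-dual} to $M$ and $T$ with parameter $\delta$ yields norm-one functionals $\varphi_1,\varphi_2\in M_*$ such that
$$\norm{Tx}\le(\sqrt2+\delta)\norm{T}\left(\norm{x}_{\varphi_1}^2+\delta\norm{x}_{\varphi_2}^2\right)^{1/2}\qquad\mbox{for }x\in M.$$
By the homogeneity of the prehilbertian seminorm, $\delta\norm{x}_{\varphi_2}^2=\norm{x}_{\delta\varphi_2}^2$, so the bracket equals $\norm{x}_{\varphi_1}^2+\norm{x}_{\delta\varphi_2}^2$. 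Now I would invoke Proposition~\ref{P:B(H,K)} for the pair $\varphi_1$, $\delta\varphi_2$ (of norms $1$ and $\delta$) together with an auxiliary parameter $\delta'>0$ to obtain a norm-one functional $\psi\in M_*$ with
$$\norm{x}_{\varphi_1}^2+\norm{x}_{\delta\varphi_2}^2\le(1+2\delta+\delta')\norm{x}_\psi^2\qquad\mbox{for }x\in M.$$
Combining the two displays gives $\norm{Tx}\le(\sqrt2+\delta)(1+2\delta+\delta')^{1/2}\norm{T}\norm{x}_\psi$ for all $x\in M$, and it remains to choose $\delta,\delta'>0$ so small that $(\sqrt2+\delta)(1+2\delta+\delta')^{1/2}\le\sqrt2+\varepsilon$, which is possible since this quantity tends to $\sqrt2$ as $\delta,\delta'\to0^+$.

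There is no genuine obstacle at this stage: all the substance is already packed into Proposition~\ref{P:B(H,K)}, whose proof in turn relies on the fact that the Peirce-$2$ subspaces of tripotents in $M$ are upward directed by inclusion (so that a single unitary in a sufficiently large Peirce-$2$ subalgebra dominates the relevant support tripotents), on Observation~\ref{obs:type I approx triples}, and on Lemma~\ref{L:rotation}. The only points demanding minor care in the present deduction are the bookkeeping of the small parameters $\delta,\delta'$ and the scaling identity $\norm{x}_{t\varphi}^2=t\norm{x}_\varphi^2$ used to absorb the factor $\delta$ into the functional.
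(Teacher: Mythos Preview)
Your proposal is correct and follows exactly the route the paper takes: the paper simply says the corollary results from combining Proposition~\ref{P:B(H,K)} with Theorem~\ref{T:triples-dual}, and you have spelled out the bookkeeping of that combination (which the paper leaves implicit, just as it does for Theorem~\ref{T:JBW*-algebras}). The scaling identity $\norm{x}_{t\varphi}^2=t\norm{x}_\varphi^2$ and the choice of $\delta,\delta'$ are handled correctly.
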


We finish by pointing out main problems concerning JBW$^*$-triples.

\begin{ques}
Assume that $M$ is a JBW$^*$-triple of one of the following forms:
\begin{itemize}
    \item $M=L^\infty(\mu,C)$, where $\mu$ is a probability measure and $C$ is a finite-di\-men\-sio\-nal JB$^*$-triple without unitary element.
    \item $M=pV$, where $V$ is a von Neumann algebra and $p$ is a purely infinite projection.
    \item $M=pV$, where $V$ is a von Neumann algebra and $p$ is a finite projection.
\end{itemize}
Is Theorem~\ref{T:JBW*-algebras} valid for $M$?
\end{ques}

Note that these three cases correspond to the three cases distinguished in \cite{HKPP-BF}. We conjecture that the second case may be proved by adapting the results of Section~\ref{sec:JW*} (but we do not see an easy way) and that the third case is the most difficult one (similarly as in \cite{HKPP-BF}).

\begin{remark}{\rm Haagerup applied in \cite{haagerup1985grothendieck} ultrapower techniques to relax some of the extra hypotheses assumed by Pisier in the first approach to a Grothendieck inequality for C$^*$-algebras. We should include a few words justifying that Haagerup's techniques are not effective in the setting of JB$^*$-triples. Indeed, while
a cluster point (in a reasonable sense) of states of a unital C$^*$-algebra is a state, a cluster
point of norm-one functionals may be even zero. It is true for weak (weak$^*$) limits and also for ultrapowers. The ultrapower, $E_{\mathcal{U}},$ of a JB$^*$-triple, $E$, with respect to an ultrafilter $\mathcal{U}$, is again a JB$^*$-triple with respect to the natural extension of the triple product (see \cite[Corollary 10]{Dineen86}), and $E$ can be regarded as a JB$^*$-subtriple of $E_{\mathcal{U}}$ via the inclusion of elements as constant sequences. Given a norm one functional $\widetilde{\varphi}\in  E_{\mathcal{U}}^*$ the restriction $\varphi = \widetilde{\varphi}|_{E}$ belongs to $E^*$ however we cannot guarantee that $\|x\|_{\widetilde{\varphi}} = \|[x]_{\mathcal{U}}\|_{\widetilde{\varphi}}$ is bounded by a multiple of $\|x\|_{{\varphi}}$. Let us observe that both prehilbertian seminorms coincide on elements of $E$ when the latter is a unital C$^*$-algebra and $\widetilde{\varphi}$ is a state on $E.$

}\end{remark}

\medskip

\textbf{Acknowledgements}
A.M. Peralta partially supported by the Spanish Ministry of Science, Innovation and Universities (MICINN) and European Regional Development Fund project no. PGC2018-093332-B-I00, the IMAG–Mar{\'i}a de Maeztu grant CEX2020-001105-M/AEI/10.13039/501100011033, and by Junta de Andaluc\'{\i}a grants FQM375 and A-FQM-242-UGR18. \smallskip\smallskip

We would like to thank the referees for their carefully reading of our manuscript and their constructive comments.

\def\cprime{$'$} \def\cprime{$'$}

\end{document}